\documentclass{svjour3}       % onecolumn (second format)
\smartqed  % flush right qed marks, e.g. at end of proof

\usepackage{amsmath, amssymb, color}
\usepackage[colorlinks=true,linkcolor=blue,urlcolor=blue]{hyperref}
\usepackage{graphicx}
\usepackage{caption}
\usepackage{mathtools}
\usepackage{enumerate}
\usepackage[all]{xypic}
\usepackage{verbatim}
\usepackage{tikz}
\usetikzlibrary{matrix}
\usetikzlibrary{arrows}
\usepackage[linesnumbered,lined,commentsnumbered,ruled]{algorithm2e}
\usepackage{caption}

\newtheorem{algo}[theorem]{Algorithm}

\newtheorem{notation}[theorem]{Notation}

\numberwithin{theorem}{section}

\newcommand{\RR}{\mathbb{R}}

\newcommand{\CC}{\mathbb{C} }

\newcommand{\NN}{\mathbb{N}}
\newcommand{\KK}{\mathbb{K}}
\newcommand{\FF}{\mathbb{F}}
\newcommand{\MM}{\mathcal{M}}
\newcommand{\pp}{\mathfrak{p}}
\newcommand{\Hom}{{\normalfont\text{Hom}}}
\newcommand{\Diff}{{\normalfont\text{Diff}}}
\makeatletter
\newcommand{\verbatimfont}[1]{\renewcommand{\verbatim@font}{\ttfamily#1}}
\makeatother

\begin{document}

\title{Primary Ideals and their Differential Equations} 

\author{Yairon Cid-Ruiz \and
	 		   Roser Homs  \and 
	 		   Bernd Sturmfels
}

\institute{Yairon Cid-Ruiz \at
	 Ghent University \\
	\email{Yairon.CidRuiz@UGent.be}             
	\and
	Roser Homs \at
	TU  M\"unchen \\
	\email{roser.homs@tum.de}
	\and
	Bernd Sturmfels \at
	MPI-MiS Leipzig and UC Berkeley \\
	\email{bernd@mis.mpg.de}
}

\maketitle

\vspace{2cm}

\begin{abstract} 
	An ideal in a polynomial ring encodes a system of linear partial differential equations with constant coefficients.
	Primary decomposition organizes the solutions to the PDE.
	This paper develops a novel structure theory for primary ideals in a polynomial ring.
	We characterize primary ideals in terms of PDE, punctual Hilbert schemes, 
	relative Weyl algebras, and the join construction.
	Solving the PDE described by a primary ideal amounts to computing
	Noetherian operators in the sense of Ehrenpreis and Palamodov.
	We develop new algorithms for this task, and
	we present efficient implementations.
	\keywords{primary ideals \and linear partial differential equations \and Noetherian operators \and differential operators \and punctual Hilbert scheme\and Weyl algebra \and join of ideals \and symbolic powers.}
	\subclass{13N10 \and 35E05 \and 13N99 \and 13A15.}
\end{abstract}

\section{Introduction}
\label{sec1}

In his 1938 article  \cite{GROBNER_MATH_ANN} on the foundations of algebraic geometry,
Gr\"obner introduced differential operators 
to characterize membership in a polynomial ideal. He derived such characterizations for ideals that
are  prime or primary to a rational maximal ideal \cite[pages~174-178]{GROBNER_BOOK_AG_2}.
In a 1952 lecture  \cite[\S 1]{GROBNER_LIEGE} he
suggested that the same program can be carried out for~any 
primary ideal. 
Gr\"obner was particularly interested in algorithmic solutions to this problem.

Substantial contributions in this subject area were made by analysts.
In the 1960s, Ehrenpreis  \cite{EHRENPREIS} stated his \textit{Fundamental Principle} on solutions to
linear partial differential equations (PDE) with complex constant coefficients. A main step was the characterization of
primary ideals by differential operators. But, he incorrectly claimed that operators
with constant coefficients suffice. 
Using Example (\ref{exam:Palamodov}) below, Palamodov \cite{PALAMODOV} pointed out the error, and he gave a correct proof by introducing the representation by {\em Noetherian operators}.
Details on the Ehrenpreis-Palamodov Fundamental Principle can also be found in \cite{BJORK,HORMANDER}.

The ball returned to algebra in 1978 when Brumfiel
published the little-known paper~\cite{BRUMFIEL_DIFF_PRIM}.
In 1999, Oberst \cite{OBERST_NOETH_OPS} extended Palamodov's Noetherian operators to polynomial rings over arbitrary fields. 
In 2007, Damiano, Sabadini  and Struppa \cite{DAMIANO} gave a computational approach.
A general theory for Noetherian commutative rings was developed recently in \cite{NOETH_OPS}. 
Building on this, the present article develops a theory of primary ideals as envisioned by Gr\"obner.

We now introduce a running example that serves to illustrate our title and results.
The following prime ideal of codimension $c=2$ in $n=4$ variables is familiar to many algebraists:
\begin{equation}
\label{eq:twistedcubic1}
P \quad = \quad \langle\,
x_1^2-x_2 x_3,\, x_1 x_2 - x_3 x_4, x_2^2 - x_1 x_4 \,\rangle \quad \subset
\quad \CC[x_1,x_2,x_3,x_4].
\end{equation}
This ideal defines the (affine cone over the) {\em twisted cubic curve}  \cite{CUBIC_LITTLE}:
$$\,V(P) = \bigl\{ \,(s^2t, s t^2,s^3, t^3) \,:\, s,t \in \CC \,\bigr\}.$$ 
We
identify the polynomials in (\ref{eq:twistedcubic1}) with PDE with constant coefficients
by setting $x_i = \partial_{z_i}$.
Solving these PDE means describing all functions $\psi(z_1,z_2,z_3,z_4)$ with
\begin{equation}
\label{eq:twistedcubic2}
\frac{\partial^2 \psi}{\partial z_1^2} = \frac{\partial^2 \psi}{\partial z_2 \partial z_3}    \qquad {\rm and} \qquad
\frac{\partial^2 \psi}{\partial z_1 \partial z_2} = \frac{\partial^2 \psi}{\partial z_3 \partial z_4}   \qquad {\rm and} \qquad
\frac{\partial^2 \psi}{\partial z_2^2} = \frac{\partial^2 \psi}{\partial z_1 \partial z_4}.
\end{equation}
Analysis tells us that every solution comes from a measure $\mu$ on the $(s,t)$-plane:
\begin{equation}
\label{eq:twistedcubic4}
\psi(z_1,z_2,z_3,z_4) \,\,\,\, = \,\,\,
\int {\rm exp} \bigl(   z_1 s^2 t \,+\, z_2 s t^2 \,+\,z_3 s^3 \,+\, z_4 t^3
\bigr) \,\mu(s,t) \,{\rm d}s \,{\rm d}t .
\end{equation}
For instance, if $\mu$ is the Dirac measure at the point $(2,3)$ then 
this solution equals $\psi = {\rm exp}(  12 z_1 + 18 z_2 + 8 z_3 + 27 z_4)$.
Thus, the functions $\psi$ are simply an analytic encoding of the affine surface  $V(P) \subset \CC^4$.

The situation becomes interesting when we consider a non-reduced scheme structure
on our surface.   
Algebraically, this means replacing the prime $P$ by a $P$-primary ideal.
We  use differential operators to give 
compact representations of $P$-primary ideals $Q$. For instance,
\begin{equation}
\label{eq:twistedcubic6}
\begin{matrix}
Q \,\,=\, \, \bigl\{ \,f \in \CC[x_1,x_2,x_3,x_4]\,:\,
A_i \bullet f  \in P \,\,\,{\rm for} \,\,\, i=1,2,3\, \bigr\}, \smallskip \\ {\rm where} \quad
\,A_1 \,=\, 1\,,\;\, A_2\,=\, \partial_{x_1}
\,\,\,{\rm and} \,\,\,A_3 \,=\, \partial_{x_1}^2 \,-\, 2 \,x_2\,\partial_{x_2} . \quad 
\end{matrix}
\end{equation}
Here $\bullet$ means applying a differential operator to a function. A prime ideal is represented by 
just one Noetherian operator $A_1=1$.
We can encode (\ref{eq:twistedcubic6}) by the~ideal
\begin{align}
\label{eq:magic}
\begin{split}
	\bigl\langle 
	u_1^2 - u_2 u_3,  u_1 u_2 -u_3 u_4, 
	u_2^2-u_1 u_4,& 
	\\ x_1-u_1-y_1, x_2-u_2&-y_2, x_3-u_3, x_4-u_4, \,
	\underline{ y_1^3, \, y_2 + u_2 \,y_1^2}
	\bigr\rangle.
\end{split}  
\end{align}      
The role of the new variables $u_1,u_2,u_3,u_4,y_1,y_2$ is subtle. It will be explained
 in Section \ref{sec2}. 
 The minimal generators of $Q$ are obtained from~(\ref{eq:magic})
by eliminating $\{u_1,u_2,u_3,u_4,y_1,y_2\}$. We find that
$$ \begin{small} \begin{matrix} Q \,\, = & \,\;\; \bigl\langle\,
3 x_1^2 x_2^2-x_2^3 x_3-x_1^3 x_4-3 x_1 x_2 x_3 x_4+2 x_3^2 x_4^2\,,\,\, 
3 x_1^3 x_2 x_4-3 x_1 x_2^2 x_3 x_4-3 x_1^2 x_3 x_4^2+ \\ & 3 x_2 x_3^2 x_4^2 +2 x_2^3 -2 x_3 x_4^2\,,\,\,
3 x_2^4 x_3-6 x_1 x_2^2 x_3 x_4+3 x_1^2 x_3 x_4^2+x_2^3-x_3 x_4^2\,,\,\, 
4 x_1 x_2^3 x_3+ \\ & x_1^4 x_4-6 x_1^2 x_2 x_3 x_4 -3 x_2^2 x_3^2 x_4+4 x_1 x_3^2 x_4^2\,,\,\,
x_2^5-x_1 x_2^3 x_4-x_2^2 x_3 x_4^2+x_1 x_3 x_4^3\,,\,\, \\ & 
x_1 x_2^4-x_2^3 x_3 x_4-  x_1 x_2 x_3 x_4^2+x_3^2 x_4^3\,, 
x_1^4 x_2-x_2^3 x_3^2-2 x_1^3 x_3 x_4+2 x_1 x_2 x_3^2 x_4\,,\,\, \\ &
x_1^5-4 x_1^3 x_2 x_3+3 x_1 x_2^2 x_3^2+  2 x_1^2 x_3^2 x_4-2 x_2 x_3^3 x_4\,,
3 x_1^4 x_4^2-6 x_1^2 x_2 x_3 x_4^2+3 x_2^2 x_3^2 x_4^2 \\ & +4 x_2^4-4 x_2 x_3 x_4^2\,,\,\, 
x_2^3 x_3^2 x_4+x_1^3 x_3 x_4^2-  3 x_1 x_2 x_3^2 x_4^2+x_3^3 x_4^3  \!\! +x_1 x_2^3-x_1 x_3 x_4^2\,, \\ &
3 x_1^4 x_3 x_4-6 x_1^2 x_2 x_3^2 x_4+3 x_2^2 x_3^3 x_4+2 x_1^3 x_2
+6 x_1 x_2^2 x_3- 6 x_1^2 x_3 x_4-2 x_2 x_3^2 x_4\,,
\\ & 4 x_2^3 x_3^3+ 4 x_1^3 x_3^2 x_4-12 x_1 x_2 x_3^3 x_4  +4 x_3^4 x_4^2 
-x_1^4+6 x_1^2 x_2 x_3+3 x_2^2 x_3^2-8 x_1 x_3^2 x_4\,
\bigr\rangle.
\end{matrix} \end{small}
$$
As in (\ref{eq:twistedcubic1}) and (\ref{eq:twistedcubic2}), we
view $Q$ as  a system of PDE by setting
$x_i = \partial_{z_i}$. Its solutions are of the following form
for suitable measures $\mu_1,\mu_2,\mu_3$ on the $(s,t)$-plane $\CC^2$:
\begin{equation}
\label{eq:twistedcubic5}
\begin{matrix}
\hspace*{-8.3cm} \psi(z_1,z_2,z_3,z_4) \, = \\
\qquad\qquad \sum_{i=1}^3 \int \! B_i(z_1,z_2,s,t) \cdot
{\rm exp} \bigl(  z_1 s^2 t + z_2 s t^2 +z_3 s^3 +  z_4 t^3 \bigr)  \,\mu_i(s,t) \,{\rm d}s \,{\rm d}t ,
\smallskip \\ \qquad {\rm where}  \quad
B_1\,=\, 1\,,\;\, B_2 = z_1 \,\,\, {\rm and} \,\,\, B_3 \,=\, z_1^2  -  2 st^2 z_2 .
\end{matrix}
\end{equation}
Note that the primary ideal $Q$ has multiplicity $3$ over the prime ideal~$P$.

\smallskip

The title of this paper refers to two
ways of associating differential equations to a primary ideal in a polynomial ring.
First, we use PDE with polynomial coefficients, namely
Noetherian operators $A_i$ as in (\ref{eq:twistedcubic6}), 
to give a compact encoding of $Q$. Second, we can 
interpret $Q$ itself as a system of PDE with constant coefficients, with 
solutions represented by {\em Noetherian multipliers} $B_i$ as in (\ref{eq:twistedcubic5}).
The dual roles played by the $A_i$ and $B_i$ is one of our main themes.

\smallskip

This paper is organized as follows.
In Section \ref{sec2} we state our main result, namely the
  characterization of primary ideals in terms of punctual Hilbert schemes and Weyl-Noether modules.
The former offers a parametrization of all $P$-primary ideals of a given multiplicity, and the latter establishes the links to differential equations.
In Section \ref{sec3} we turn to the Ehrenpreis-Palamodov
Fundamental Principle. 
We present a self-contained proof of the algebraic part. 
In Sections \ref{sec4} and \ref{sec6} we prove the results stated in Section \ref{sec2}.
Section \ref{sec5} reviews differential operators in commutative algebra
and supplies tools for our proofs.
In Section \ref{sec7} we study the join construction for primary ideals, which offers a new perspective on ideals that are similar to symbolic powers.
In Section~\ref{sec8} we introduce  algorithms for computing Noetherian operators
and hence for solving linear PDE with constant coefficients.

\section{Characterizing Primary Ideals}
\label{sec2}

Irreducible varieties and their prime ideals are the basic building blocks in algebraic geometry.
Solving systems of polynomial equations
means extracting the associated primes from the system, and to
subsequently study their irreducible varieties. However, if the given ideal
is not radical then we seek the primary decomposition and not just the 
associated primes. We wish to gain a precise understanding of the
primary ideals that make up the given scheme.

We furnish a representation theorem for primary ideals in a polynomial ring,
extending the familiar case of zero-dimensional ideals (Macaulay's inverse system \cite{Grobner37}).
Fix a field $\KK$ of characteristic zero and 
a prime ideal $P$ of codimension $c$ in the polynomial ring   $R = \KK[x_1,\ldots,x_n]$.
We write $\FF$ for the field of fractions of the integral domain $R/P$. 

		The main contribution of this paper is the theorem below. 
		We parametrize all $P$-primary ideals of a given multiplicity over $P$  via three different but closely related objects.
		Of particular interest is our characterization of $P$-primary ideals as points in a \emph{punctual Hilbert scheme} --- an object of fundamental importance in algebraic geometry. 
		This tells us that the space of all $P$-primary ideals
		has a rich geometrical structure.
		While Theorem~\ref{thm:main} is a theoretical contribution in commutative algebra,
		it leads to efficient algorithms for going back and forth between primary ideals 
		and Noetherian operators, to be discussed in Section~\ref{sec8}.

\begin{theorem} \label{thm:main}
	The following sets of objects are in a natural bijective correspondence:
	\begin{enumerate}[(a)]
		\item $P$-primary ideals $Q$ in $R$ of multiplicity $m$ over $P$, 
		\item points in the punctual Hilbert scheme $\,{\rm Hilb}^m(\FF[[y_1,\ldots,y_c]])$,
		\item $m$-dimensional $\FF$-subspaces of $\,\FF[z_1,\ldots,z_c]$ that are closed under differentiation,
		\item  $m$-dimensional $\FF$-subspaces of
		the Weyl-Noether module $ \FF \,\otimes_R  \,D_{n,c}$ that are $R$-bimodules. 
	\end{enumerate}
	Moreover, any basis of the $\FF$-subspace in (d) lifts to Noetherian operators~$A_1,\ldots,A_m$ in the relative Weyl algebra $D_{n,c}$ that represent the ideal $Q$ in (a) as in~(\ref{eq:fromAtoQ}).
\end{theorem}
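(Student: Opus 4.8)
\emph{Proof proposal.} The plan is to factor all three bijections through the complete local ring $\widehat{R_P}$ and then invoke Macaulay--Matlis duality over the residue field $\FF$. I would first prove $(a)\Leftrightarrow(b)$. Since $R$ is regular, $R_P$ is a regular local ring of dimension $c$ with residue field $\FF$, and as $\KK$ has characteristic zero it is equicharacteristic, so the Cohen structure theorem (using the linear coordinates fixed in Section~\ref{sec2}) gives an isomorphism $\widehat{R_P}\cong\FF[[y_1,\dots,y_c]]$ under which $y_i$ is the transversal coordinate $x_i$ and $\partial_{x_1},\dots,\partial_{x_c}$ descend to $\partial_{y_1},\dots,\partial_{y_c}$. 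Now $Q\mapsto QR_P$ and contraction are mutually inverse bijections between $P$-primary ideals of multiplicity $m$ over $P$ and $\mathfrak m_{R_P}$-primary ideals of colength $m$ (here the multiplicity over $P$ is $\ell_{R_P}(R_P/QR_P)$ and one uses $QR_P\cap R=Q$); completing is again a bijection onto $\mathfrak m$-primary ideals $J\subset\widehat{R_P}$ of colength $m$, because completion is faithfully flat and does not change finite-length quotients. Such $J$ are exactly the points of $\,{\rm Hilb}^m(\FF[[y_1,\dots,y_c]])$, and $J=Q\widehat{R_P}$.

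For $(b)\Leftrightarrow(c)$ I would run Macaulay's inverse systems over $\FF$. The apolarity pairing $\FF[[y_1,\dots,y_c]]\times\FF[z_1,\dots,z_c]\to\FF$, $\langle y^\alpha,z^\beta\rangle=\delta_{\alpha\beta}\,\alpha!$, makes $\FF[z_1,\dots,z_c]$ into an $\FF[[y]]$-module on which $y_i$ acts as $\partial_{z_i}$; characteristic zero lets me identify the divided-power and polynomial models. Then $J\mapsto J^{\perp}\subseteq\FF[z]$ is a bijection between $\mathfrak m$-primary ideals of colength $m$ and $m$-dimensional $\FF$-subspaces that are $\FF[[y]]$-submodules, i.e.\ closed under all $\partial_{z_i}$; finiteness comes from $J\supseteq\mathfrak m^{k}$, the equality of dimensions from exactness of $\Hom_\FF(-,\FF)$, and the inverse is $V\mapsto V^{\perp}$ with $(J^{\perp})^{\perp}=J$.

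For $(c)\Leftrightarrow(d)$ I would use the symbol map. As a left $R$-module $D_{n,c}=\bigoplus_{\alpha\in\NN^{c}}R\,\partial_x^{\alpha}$ is free, so $\FF\otimes_R D_{n,c}=\bigoplus_{\alpha\in\NN^{c}}\FF\,\partial_x^{\alpha}$, and $\sum_\alpha\bar c_\alpha\partial_x^{\alpha}\mapsto\sum_\alpha\bar c_\alpha z^{\alpha}$ is an $\FF$-linear isomorphism onto $\FF[z_1,\dots,z_c]$. Transporting the right $R$-action through $\partial_x^{\alpha}x_i=x_i\partial_x^{\alpha}+\alpha_i\partial_x^{\alpha-e_i}$, right multiplication by $x_i$ becomes $\bar x_i\cdot{}+\partial_{z_i}$ for $i\le c$ and $\bar x_i\cdot{}$ for $i>c$; since an $\FF$-subspace is automatically stable under multiplication by the scalars $\bar x_i$, being an $R$-sub-bimodule is equivalent to being closed under $\partial_{z_1},\dots,\partial_{z_c}$. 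This identifies $(d)$ with $(c)$ and preserves dimension.

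It remains to establish the representation~(\ref{eq:fromAtoQ}). For $f\in R$ the map $\Phi_f\colon D_{n,c}\to\FF$, $A\mapsto\overline{A\bullet f}$ (reduction modulo $P$), is left $R$-linear, hence factors through $\FF\otimes_R D_{n,c}$; thus $\{f: A_i\bullet f\in P\text{ for all }i\}$ depends only on the $\FF$-span of the images $\overline{A_i}$, and in particular is unchanged by the choice of lifts and of basis. With the coordinates of Section~\ref{sec2} the transversal Taylor coefficient of $f$ is $f_\alpha=\tfrac{1}{\alpha!}\,\overline{\partial_x^{\alpha}f}$, so $\overline{A_i\bullet f}=\sum_\alpha\bar c_{i,\alpha}\,\alpha!\,f_\alpha=\langle\hat f,g_i\rangle$, where $\hat f\in\FF[[y]]$ is the image of $f$ and $g_i\in\FF[z]$ the symbol of $\overline{A_i}$. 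Tracing the three bijections, the span $V$ of $g_1,\dots,g_m$ is the subspace in $(c)$ attached to $Q$, namely $V=J^{\perp}$ with $J=Q\widehat{R_P}$. Hence $f\in Q\iff\hat f\in J=(J^{\perp})^{\perp}=V^{\perp}\iff\langle\hat f,g_i\rangle=0\text{ for all }i\iff A_i\bullet f\in P\text{ for all }i$, which is~(\ref{eq:fromAtoQ}). The step I expect to be the main obstacle is the compatibility claimed at the end of the first paragraph: that the change of coordinates of Section~\ref{sec2} really makes $\widehat{R_P}$ a power series ring over $\FF$ on which $\partial_{x_1},\dots,\partial_{x_c}$ become the derivations computing the transversal Taylor expansion, so that the symbol map, the apolarity pairing, and completion all speak the same language. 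This is where characteristic zero and the precise definition of the relative Weyl algebra $D_{n,c}$ enter, and I would import it from Section~\ref{sec5}; once that dictionary is in place, the rest is checking that the displayed maps are well defined and mutually inverse.
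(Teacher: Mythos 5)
Your proposal is correct, and its three bijections follow the same architecture as the paper's: (a)$\leftrightarrow$(b) via passage to a power series ring over $\FF$ (your Cohen-structure/completion argument is the content of the paper's Proposition~\ref{prop_corespondence_primary_ideals} and Theorem~\ref{thm:param_primary}, which realize $\widehat{R_P}\cong\FF[[y_1,\dots,y_c]]$ through the explicit map $\gamma$ of (\ref{eq_map_gamma}) rather than through abstract completion); (b)$\leftrightarrow$(c) via Macaulay duality (the paper phrases your apolarity pairing as Matlis duality with the injective hull $E\cong\FF[y_1^{-1},\dots,y_c^{-1}]$, Theorem~\ref{thm:Macaulay_dual}); and (c)$\leftrightarrow$(d) via exactly your symbol map $\omega$ and the commutator identity (\ref{eq_deriv_z_bracket_partial}). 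Where you genuinely diverge is the final step, the representation (\ref{eq:fromAtoQ}). The paper reaches it through the machinery of Section~\ref{sec5} (modules of principal parts, the identification $\FF\otimes_R D_{n,c}\cong\Diff_{S/\KK(x_{c+1},\dots,x_n)}(S,\FF)$, and a two-stage reduction of the general case to the zero-dimensional case over $S=\KK(x_{c+1},\dots,x_n)[x_1,\dots,x_c]$, Remarks~\ref{rem_clear_fractions_diff_ops}--\ref{rem_isom_restrict_Weyl_mod} and Theorem~\ref{thm_noeth_ops_zero_dim}), whereas you compute $\overline{A_i\bullet f}=\langle\gamma(f),g_i\rangle$ directly from the transversal Taylor expansion and conclude by the double-perp identity. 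Your route is shorter and more elementary; the paper's buys the functorial framework ($\Diff_{R/\KK}$, surjectivity of $\Diff^m_{R/\KK}(\pi)$, Lemma~\ref{lem_prim_ideal_implies_bimod}) that it reuses for Theorem~\ref{thm:leftequalsright} and Theorem~\ref{thm:ourZN}. The "obstacle" you flag at the end is in fact disposed of by the elementary Taylor formula $\gamma(f)=\sum_\alpha\frac{1}{\alpha!}\overline{\partial_{\mathbf x}^\alpha f}\,\mathbf y^\alpha$ stated before Proposition~\ref{prop_corespondence_primary_ideals}; you do not need to extend the derivations $\partial_{x_i}$ to the completion, only to evaluate them on polynomials, together with the vanishing of $\Omega_{\FF/\KK(x_{c+1},\dots,x_n)}$ in characteristic zero. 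The only detail to make explicit is that lifting a basis from $\FF\otimes_R D_{n,c}$ to $D_{n,c}$ multiplies each $g_i$ by a denominator-clearing polynomial $h\notin P$, which does not affect the vanishing conditions.
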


The purpose of this section is to define and explain all the 
concepts in Theorem \ref{thm:main}. Our aim is to state
the promised bijections as explicitly as possible. 
The proof of Theorem \ref{thm:main}
will be divided into smaller pieces and given in Sections \ref{sec4} and \ref{sec6}.
The encoding of $Q$ by
Noetherian operators $A_i$ will be explained in Section~\ref{sec3}.	
We already saw an example in (\ref{eq:twistedcubic6}).
The Weyl-Noether module in part (d)
is our stage for the PDE that portray primary ideals.

\smallskip

We begin by returning to Gr\"obner. His 1937
article \cite{Grobner37}  interpreted Macaulay's inverse system as solutions
to linear PDE. 
He considered the special case when $P = \langle x_1,\ldots,x_n \rangle $ is the homogeneous maximal ideal, so we have $c=n$ and $\FF = \KK$.
The geometric intuition invoked in \cite[\S 1]{GROBNER_LIEGE}
is captured by the punctual Hilbert scheme $\,{\rm Hilb}^m(\KK[[y_1,\ldots,y_n]])$,
whose points are precisely the $P$-primary ideals of colength~$m$.
This zero-dimensional case is familiar to most commutative algebraists,
especially the readers of \cite{MOURRAIN_DUALITY}. Here,
parts (c) and (d) of Theorem~\ref{thm:main} refer to the $m$-dimensional $\KK$-vector 
space of polynomial solutions to the~PDE.

The general case of higher-dimensional primary ideals $Q$
was of great interest to Gr\"obner. In his 1952 Li\`ege lecture \cite{GROBNER_LIEGE}, he 
points to Severi \cite{Severi}, and he writes:
{\em En ce sense la vari\'et\'e alg\'ebrique correspondant \`a un id\'eal
	primaire $Q$ pour l'id\'eal premier $P$ consiste en les points ordinaires de la
	vari\'et\'e $\,V(P)$ et en certain nombre $m$ de points
	infiniment voisins, c'est-\`a-dire dans $m$ conditions diff\'erentielles 
	ajout\'ees \`a chaque point de la vari\'et\'e $\,V(P)$. Le nombre $m$ de ces
	conditions diff\'erentielles est \'egal \`a la longueur de l'id\'eal primaire $Q$.}

But Gr\"obner was never able to complete the program himself, in spite of
the optimism he still expressed in his 1970 textbook \cite{GROBNER_BOOK_AG_2}.
After the detailed treatment of Macaulay's inverse systems for zero-dimensional ideals, he proclaims:
{\em Es d\"urfte auch nicht schwer sein den oben angegebenen Formalismus
	auf mehrdimensionale Prim\"arideale auszudehnen}
\cite[page 178]{GROBNER_BOOK_AG_2}.

The issue was finally resolved by the theory of
Ehrenpreis-Palamodov \cite{EHRENPREIS,PALAMODOV},
presented in Section \ref{sec3},
and the subsequent developments \cite{BRUMFIEL_DIFF_PRIM,NOETH_OPS,DAMIANO,OBERST_NOETH_OPS}
we discussed in the Introduction.
We regard  our Theorem \ref{thm:main} as a rather definitive result on primary ideals in $R$.
It captures the geometric spirit of Gr\"obner and Severi, as it explains
their ``infinitely near points'' in the language of modern algebraic geometry, namely
using Hilbert schemes. This opens up the possibility 
of developing numerical methods for primary ideals and their differential equations,
by linking our results to current advances in
numerical algebraic geometry \cite{KRONE,LEYKIN}.

Two essential ingredients in Theorem \ref{thm:main}
are the function field $\FF$  and the Weyl-Noether module
$\, \FF  \otimes_R D_{n,c}$.
We start our technical discussion with some insights into these objects.
	Since $\dim(R/P) = n-c$, after permuting variables, we assume that $\{x_{c+1},\ldots,x_n\}$ is a \emph{maximal independent set of variables modulo $P$}. 	This means that
		 $\KK[x_{c+1},\ldots,x_n] \cap P = \{0\}$; see
	\cite{KREDEL_INDEP_SET}.
	 Expressed in combinatorics language, 
	 our assumption says that 
	 $\{x_{c+1},\ldots,x_n\}$ 
	 is a basis of the {\em algebraic matroid} given by $P$. This implies that
	$\FF = \text{Quot}(R/P)$ is algebraic over the field
	$\KK(x_{c+1},\ldots,x_n)$, which is a purely transcendental extension of the ground field~$\KK$. 

Throughout the existing literature on the construction  of Noetherian operators,
the authors have relied on the process of Noether normalization  (see, e.g., \cite[Chapter 8]{BJORK}, \cite{OBERST_NOETH_OPS}).  They assumed that
	  the quotient ring $R/P$ is a finitely generated module over the polynomial subring $\KK[x_{c+1},\ldots,x_n]$. 
	In our view, this hypothesis is  highly undesirable because it requires
a linear change of coordinates. Changing coordinates can drastically increase the size of the
polynomials and PDE one computes with.
	We here get rid of that hypothesis. {\em We do not use Noether normalization}.
	Instead we fix any
	 maximal independent set of variables modulo~$P$.

Clear notation is very important for this article.
This is why multiple letters $x,y,z,u$ are used to denote variables
and differential operators. Elements in $\FF$ are represented as fractions of polynomials in 
$\KK[u_1,\dots,u_n]$, where $u_i$ denotes the residue class of $x_i$ modulo $P$. 
Whenever the number $n$ of variables is clear from the context, we use the multi-index notation $\mathbf{u}^\alpha=u_1^{\alpha_1}\cdots u_n^{\alpha_n}$. 
Elements $a(\mathbf{u})/b(\mathbf{u})$ of the field $\FF$ are represented by
taking $a(\mathbf{u})$ and $b(\mathbf{u})$ coprime and in normal~form
with respect to a Gr\"obner basis of $P$.
Arithmetic in $\FF$ is performed via this Gr\"obner basis.
The $R$-module structure of $\FF$ is given by $\,\mathbf{x}^\alpha\cdot a(\mathbf{u})/b(\mathbf{u})=\mathbf{u}^\alpha a(\mathbf{u})/b(\mathbf{u})$. 
Alternatively, from the perspective of numerical algebraic geometry, a better approach to arithmetic in $\FF$ is to work with generic points, obtained by realizing $R/P$ as a subring of a suitable field of functions on $V(P)$.
In our running example, that suitable field could be $\,\KK(s/t,t^3)$. It contains
$R/P$ as the subring $\KK[s^2 t, st^2, s^3, t^3]$.

The {\em relative Weyl algebra}   $D_{n,c} = \KK \langle x_1,\ldots,x_n, \partial_{x_1},\dots,\partial_{x_c}\rangle$
is the $\KK$-algebra on  $n{+}c$ generators 
$x_1,\ldots,x_n,\partial_{x_1},\ldots,\partial_{x_c}$ that commute  
except for $\partial_{x_i}x_i=x_i\partial_{x_i}{+}1$. 
This is a subalgebra of the  Weyl algebra, so $D_{n,c}$ is non-commutative. 
Its elements are linear differential operators with polynomial coefficients,
where derivatives occur with respect to the first $c$ variables. 
The following set is a $\KK$-basis of $D_{n,c}$:
$$
\left\lbrace x_1^{\alpha_1}\cdots x_n^{\alpha_n}{\partial_{x_1}}^{\!\!\!\!\beta_1}\cdots
{\partial_{x_c}}^{\!\!\!\!\beta_c}:(\alpha,\beta)\in\mathbb{N}^n\times\mathbb{N}^c\right\rbrace
$$ 

The {\em Weyl-Noether module} of the affine variety $V(P)$ is the tensor product
\begin{equation}
\label{eq:relativeweyl}
\FF\,\otimes_R\,D_{n,c} \,\, = \,\, \FF \,\otimes_R \,
R \langle \partial_{x_1},\ldots,\partial_{x_c}\rangle.
\end{equation}
Since $\FF$ is the field of fractions of the integral domain $R/P$,  it is clearly 
an $R$-module. Note that the relative Weyl algebra
$D_{n,c} = R\langle \partial_{x_1},\dots,\partial_{x_c}\rangle$
is non-commutative, and it has two distinct  $R$-module structures:
it is a left $R$-module and it is a right $R$-module.
In the tensor product (\ref{eq:relativeweyl}),
for convenience of notation, we mean the left $R$-module structure
on $D_{n,c}$. Later, in Remark~\ref{rem_isom_restrict_Weyl_mod}, we shall 
give an intrinsic description of $\FF \otimes_R D_{n,c}$ with differential operators.

The Weyl-Noether module (\ref{eq:relativeweyl}) 
has both right and left $R$-module structures.
The action by $R$ on the left is easy to write using the standard $\KK$-basis above:
\begin{equation}
\label{eq:leftaction}
\mathbf{x}^\alpha \cdot \biggl( \frac{a(\mathbf{u})}{b(\mathbf{u})} \,\otimes_R\, \partial_{\mathbf{x}}^\beta\ \biggr) \,\,\, = \,\,\,
\frac{\mathbf{u}^{\alpha} a(\mathbf{u})}{b(\mathbf{u})} \,\otimes_R\, \partial_{\mathbf{x}}^\beta.
\end{equation}
For the action on the right we need the commutation identities in the Weyl algebra:
$$\partial_{\mathbf{x}}^\beta \mathbf{x}^\alpha\,\,=\,\,\sum_{\gamma, \delta} \lambda_{\gamma,\delta}
\,  \mathbf{x}^\gamma \partial_\mathbf{x}^\delta.$$
Here $\lambda_{\gamma,\delta}$ are the positive integers in \cite[Problem 4]{SatStu}.
With this, the right action~is
\begin{equation}
\label{eq:rightaction}
\biggl( \frac{a(\mathbf{u})}{b(\mathbf{u})}  \,\otimes_R \, \partial_{\mathbf{x}}^\beta \biggr)
\cdot \mathbf{x}^\alpha  \, \,\, = \,\,\, \frac{a(\mathbf{u})}{b(\mathbf{u})}
\, \otimes_R \, \partial_{\mathbf{x}}^\beta \mathbf{x}^\alpha \,\,\,=\,\,\,
\sum_{\gamma,\delta} \lambda_{\gamma,\delta} \, \frac{\mathbf{u}^{\gamma}a(\mathbf{u})}{b(\mathbf{u})}  \, \otimes_R \, 
\partial_\mathbf{x}^\delta.
\end{equation}
This means that the $R$-bimodule condition in Theorem~\ref{thm:main}~(d)
is very strong.

From the action (\ref{eq:leftaction}) we deduce that  $\,\FF\,\otimes_R\,D_{n,c}\,$ is a left $\FF$-vector space with basis 
$\,\left\lbrace 1 \otimes_R \partial_{\mathbf{x}}^\beta: \beta\in\mathbb{N}^c\right\rbrace$,
so we could also write $\FF\langle \partial_{x_1},\dots,\partial_{x_c}\rangle$
for (\ref{eq:relativeweyl}).
However, we prefer the previous notation because it highlights
that  there are two distinct structures. The  Weyl-Noether module
is a left $\FF$-vector space via 
(\ref{eq:leftaction}) and it is a right $R$-module via (\ref{eq:rightaction}). 
It is not a right $\FF$-vector space because the right $R$-action
is not compatible with passing to $R/P$:

\begin{example} Fix the maximal ideal $P=\langle x_1, \ldots,x_n\rangle$
	so that  $\FF=\KK$ and $c=n$. 
	Since $\overline{x_j} = 0 \in R/P$, we have $\,x_j \cdot \left(1 \otimes_R \partial_{x_j}\right) = 0 \,$  and hence $\,\left(1 \otimes_R \partial_{x_j}\right) \cdot x_j = 1 \otimes_R 1$ holds~in $\,\FF\,\otimes_R\,D_{n,c}$.
	This shows that there is no right $\FF$-action on the Weyl-Noether module $\,\FF\,\otimes_R\,D_{n,c}$.
\end{example}

We now come to our parameter space in part (b), the punctual Hilbert~scheme 
\begin{equation}
\label{eq:hilbertscheme}
{\rm Hilb}^m \bigl( \,\FF[[y_1,\ldots,y_c]] \,\bigr). 
\end{equation}
This is a quasiprojective scheme over the function field $\FF$.
Its classical points are   ideals of colength $m$ in the local ring $\FF[[y_1,\ldots,y_c]]$.
By Cohen's Structure Theorem,  this ring is the  completion of  $R_P$, the localization of $R$ at the prime $P$.
To connect parts (a) and~(b), we recall that the
multiplicity $m$ of a primary ideal $Q$ over its prime $P = \sqrt{Q}$ 
is the length of the artinian local ring $\,R_P/Q R_P$.
In symbols, using the command {\tt degree}  in  {\tt Macaulay2}~\cite{MAC2},  we have
$$ m \,\,=\,\, {\rm length}\bigl( R_P/QR_P \bigr) \,\,=\,\, \frac{{\tt degree}(Q)}{{\tt degree}(P)}.$$

The punctual Hilbert scheme (\ref{eq:hilbertscheme}) is familiar to algebraic geometers, 
but its structure is very complicated when $c \geq 3$.
We refer to Iarrobino's article \cite{IARROBINO_HILB} as a point of entry,
and to Jelisiejew's recent papers \cite{JEL1,JEL2} for the intriguing pathologies in this subject.
While the punctual Hilbert scheme is trivial for $c=1$, Brian\c{c}on \cite{Bri77} undertook a detailed study for $c=2$.
He showed that $\,{\rm Hilb}^m \bigl( \FF[[y_1,y_2]] \bigr)\,$ is smooth and irreducible of dimension $m-1$.
A dense subset is given by the $(m-1)$-dimensional family of $\langle y_1,y_2 \rangle$-primary ideals of the form
\begin{equation}
\label{eq:HSfamily}
\qquad
\bigl\langle  \,\, y_1^m\,,\, \,y_2 \,+\, a_1 y_1 + a_2 y_1^2 +  \cdots +  a_{m-1} y_1^{m-1} \,
\bigr\rangle, \quad
{\rm where}\,\,a_1,a_2,\ldots, a_{m-1} \in \FF.
\end{equation}
For instance, for $m=3$, the Hilbert scheme (\ref{eq:hilbertscheme}) is a surface over $\FF$.
Each of its points encodes a scheme structure of multiplicity $3$ on the variety $V(P)$. 
This is the generic point on $V(P)$ together with two ``infinitely near points'', in the language of Gr\"obner and Severi.
To see that the family (\ref{eq:HSfamily}) is a proper subset of
$\,{\rm Hilb}^m \bigl( \FF[[y_1,y_2]] \bigr)$, we consider the points
$$ \qquad \langle\,  y_1^3 \,, \,y_2 + \epsilon^{-1} y_1^2 \,\rangle\,\,  =\,\,
\langle \,y_1^2 + \epsilon y_2\, , \,y_1y_2\, ,\,y_2^2 \,\rangle \quad \in \,\,\,
{\rm Hilb}^3 \bigl( \FF[[y_1,y_2]] \bigr). 
$$
For $\epsilon \in \FF \backslash \{0\}$, this $\langle y_1,y_2 \rangle $-primary
ideal is in the family (\ref{eq:HSfamily}); for $\epsilon = 0$ it is not.

\begin{remark}
	In the zero-dimensional case, 
	when $P = \langle x_1,\ldots,x_n \rangle$, the
	correspondences in Theorem \ref{thm:main} 
	are well-known since the 1930's.
	Wolfgang Gr\"obner tells us:
	{\em Die noch verbleibende Aufgabe, die Integrale eines Prim\"arideals aus denjenigen f\"ur das zugeh\"orige  Primideal abzuleiten, wollen wir hier wenigstens f\"ur null-dimensionale Prim\"arideale allgemein l\"osen}
	\cite[page 272]{Grobner39}.
	In our current understanding, the
	$P$-primary ideals are points in ${\rm Hilb}^m(\KK[[y_1,\dots,y_n]])$, subspaces closed by differentiation are Macaulay's inverse systems, and 
	these account for polynomial solutions to linear PDE 
	with constant coefficients \cite{MOURRAIN_DUALITY,STURMFELS_SOLVING}.
\end{remark}

\begin{remark} Some subtleties regarding the punctual Hilbert scheme are worth mentioning. Finite subschemes of length $m$ supported at the origin of $\FF[[y_1,...,y_c]]$ can be parametrized by three different objects:

\smallskip
\noindent
${\rm Hilb}^m(\FF[[y_1,...,y_c]])$, ${\rm Hilb}^m(\FF[[y_1,...,y_c]]/(y_1,...,y_c)^m)$ and
the locus in ${\rm Hilb}^m(\mathbb{A}^c)$ of subschemes that are  supported at the origin. 

\smallskip
These schemes have the same set of closed points but differ in their properties. In Theorem~\ref{thm:main} we 
parametrize zero-dimensional ideals in $\FF[[y_1,…,y_c]]$ of colength $m$, 
so we are only interested in the closed points. 
The ambiguities in defining the punctual Hilbert scheme do not affect our results.
\end{remark}

\medskip

The idea behind Theorem~\ref{thm:main} is to reduce the study of
arbitrary primary ideals in  $R = \KK[x_1,\ldots,x_n]$ to a
zero-dimensional setting over the function field $\FF$.
Recall that coordinates were chosen so that
$\FF=\text{Quot}(R/P)$ is algebraic over $\KK(x_{c+1},\ldots,x_n)$.
Consider the  inclusion
\begin{equation}
\label{eq_map_gamma}
\gamma:R \hookrightarrow \FF[y_1,\dots,y_c]\, , \qquad
\begin{matrix}
x_i  &\mapsto &  y_i+u_i, & \!\!\!\!\! \mbox{ for }1\leq i\leq c,\\
x_j & \mapsto  & u_j,& \quad \mbox{ for }c+1\leq j\leq n,
\end{matrix}
\end{equation}
where $u_i$ denotes the class of $x_i$ in $\FF$, for $1\leq i\leq n$.
From this we get an explicit 
correspondence between the objects in parts (a) and (b) of
Theorem \ref{thm:main}:
\begin{equation}
\label{eq:corr12}
\begin{array}{ccc}
\left\lbrace\begin{array}{c}
\mbox{$P$-primary ideals of $R$}\\
\mbox{with multiplicity $m$ over $P$}
\end{array}\right\rbrace
& \longleftrightarrow &
\left\lbrace\begin{array}{c}
\mbox{points in }{\rm Hilb}^m(\FF[[y_1,\ldots,y_c]])\\
\end{array}\right\rbrace\\
Q & \longrightarrow & I=\langle y_1,\dots,y_c\rangle^m+\gamma(
Q)\FF[y_1,\dots,y_c]\\
Q\,=\,\gamma^{-1}(I) & \longleftarrow & I.
\end{array}
\end{equation}

\begin{example}\label{Bij:1,2} 
	Fix $P $ and $Q$ as in the Introduction, with $n=4$, $m=3$, $c=2$,  where
	$\FF$ is algebraic over $\CC(x_3,x_4)$. The primary ideal $Q$
	corresponds to a point in ${\rm Hilb}^3(\FF[[y_1,y_2]])$. See \cite[Section IV.2]{Bri77} for a detailed description of points in the Hilbert scheme of degree $3$ in two variables. 
	The bijection 
	in (\ref{eq:corr12}) gives the following point in the punctual Hilbert scheme: 
	\begin{equation}
	\label{eq:punctual2}
	I \,\,=\,\,\langle y_2^2,y_1y_2,y_1^2+u_2^{\,-1}y_2\rangle
	\,\,\,\subset \,\,\FF[[y_1,y_2]].
	\end{equation}
	Note that this ideal is also generated by $y_1^3$ and $y_2+u_2y_1^2$, as in (\ref{eq:magic}).
\end{example}

\smallskip

The bijection between (b) and (c) is Macaulay's duality between ideals of dimension zero in a power series ring and finite-dimensional subspaces in a polynomial ring that are closed under differentiation.  
To interpret polynomials in $I$ as PDE, we replace $y_i$ by $\partial_{z_i}$.
So, by slight abuse of notation, we shall  write $\FF[[y_1,\ldots,y_c]]$ and $\FF[[\partial_{z_1}, \ldots, \partial_{z_c}]]$ 
interchangeably.
With this, the {\em inverse system} of a zero-dimensional ideal $I$ in the local ring $\FF[[\partial_{z_1},\dots,\partial_{z_c}]]$, denoted by $I^\perp$, is the $\FF$-vector space of solutions $\,\left\lbrace F\in \FF[z_1,\ldots,z_c]: f\bullet F=0 \mbox{ for all }f\in I\right\rbrace$.

Inverse systems give an explicit bijection between  (b) and (c) in Theorem~\ref{thm:main}:
\begin{equation}
\label{eq:HilbBijection}
\begin{array}{ccc}
\left\lbrace\begin{array}{c}
\mbox{points in }{\rm Hilb}^m\left(\FF[[\partial_{z_1},\ldots,\partial_{z_c}]]\right)\\
\end{array}\right\rbrace
& \longleftrightarrow &
\left\lbrace\begin{array}{c}
\mbox{$m$-dimensional $\FF$-subspaces} \\ 
\mbox{of $ \FF[z_1,\dots,z_c]$} \\ \mbox{closed under differentiation}
\end{array}\right\rbrace\\
I & \longrightarrow & V=I^\perp\\
I={\rm Ann}_{\FF[[\partial_{z_1},\dots,\partial_{z_c}]]}(V) & \longleftarrow & V.\\
\end{array}
\end{equation}

\begin{example} Setting $y_i=\partial_{z_i}$, the ideal in Example~\ref{Bij:1,2} is
	$I=\langle\partial_{z_2}^2,\partial_{z_1}\partial_{z_2},\partial_{z_1}^2+u_2^{\, -1}\partial_{z_2}\rangle\subset \FF[[\partial_{z_1},\partial_{z_2}]]$.
	Note that $\,z_1^2-2u_2z_2\,$ belongs to the inverse system $I^\perp$
	because this polynomial is annihilated by all operators in $I$.
	Applying the differential operators $\partial_{z_1}$ and $\partial_{z_1}^2$ to $B_3=z_1^2-2u_2z_2$ we obtain an $\FF$-basis of the inverse system: $B_1=1$, $B_2=z_1$ and $B_3$. Moreover, $I^\perp$ is generated by $B_3$ as an $\FF[[\partial_{z_1},\partial_{z_2}]]$-module.
	Hence $I$ is a Gorenstein ideal.
\end{example}

The correspondence between items (c) and (d) in Theorem~\ref{thm:main} links 
generators of the inverse system of $I$ with Noetherian operators for $Q$. 
These will be discussed in depth in Section~\ref{sec3}. 
Suppose we are given an $\FF$-basis $\{B_1,\ldots,B_m\}$
of the inverse system $I^\perp$ in~(c).
After clearing denominators, we can write $B_i(\mathbf{u},\mathbf{z})=\sum_{\vert\alpha\vert\leq m}\lambda_\alpha(\mathbf{u})\mathbf{z}^\alpha$ 
where $\lambda_\alpha(\mathbf{u})$ is a polynomial in $R$ 
that represents a residue class modulo $P$.
We now replace the unknown $z_i$ in these polynomials with the
differential operator $\partial_{x_i}$. This gives the Noetherian operators 
\begin{equation}
\label{eq:resultingDO} \qquad
A_i(\mathbf{x},\partial_{x_1},\dots,\partial_{x_c})\,\,\,=\,\,\,\sum_{\vert\alpha\vert\leq m} \lambda_\alpha(\mathbf{x}) \partial_{x_1}^{\alpha_1}\cdots \partial_{x_c}^{\alpha_c} 
\qquad {\rm for} \,\,\, i =1,\ldots,m.
\end{equation}
The map from the $B_i$ to the $A_i$ is invertible, giving the bijection between~(c)~and~(d).

\begin{example}\label{ex:noeth}
	Consider the ideal $Q$ in (\ref{eq:twistedcubic6})
	and $I$ in (\ref{eq:punctual2}).
	From the generators $B_1(u,z)=1$, $B_2(u,z)=z_1$ and $B_3(u,z)=z_1^2-2u_2z_2$ of the inverse system $I^\perp$ in $\FF[z_1,z_2]$, we obtain the three Noetherian operators $A_1=1$, $A_2=\partial_{x_1}$ and $A_3=\partial_{x_1}^2-2x_2\partial_{x_2}$
	that encode $Q$.
	Note that $A_3$ alone does not determine $Q$, although $B_3$ is enough to generate the inverse system.
\end{example}

\section{Solving PDE via Noetherian Multipliers}
\label{sec3}

 In this section we delve into the description of primary ideals in terms of Noetherian operators, and
 we explain the connection with solving systems of linear PDE via the Fundamental Principle of Ehrenpreis \cite{EHRENPREIS} and Palamodov \cite{PALAMODOV}. In particular, we show how Theorem~\ref{thm:main}
 leads to an integral representation of the solutions. The kernels are given by the Noetherian multipliers
 $B_i$.
 We saw a first example of this in (\ref{eq:twistedcubic5}).
For analytic aspects of the Ehrenpreis-Palamodov Theorem we refer to \cite{EHRENPREIS,PALAMODOV} and to the books by  Bj\"ork \cite{BJORK} and H\"ormander~\cite{HORMANDER}.

Our point of departure is a prime ideal $P$ in the polynomial ring $R = \KK[x_1,\ldots,$ $x_n]$.
We are interested in $P$-primary ideals.
Later on we shall interpret these ideals as systems of linear PDE, by replacing each variable
$x_i$ by a differential operator $ \partial_{z_i} = \partial / \partial z_i$. 
First, however, we take a different path, aimed at turning part (d) in Theorem \ref{thm:main} into an algorithm.

After choosing a maximal independent set and possibly permuting the variables, 
the field $\FF=\text{Quot}(R/P)$ is an algebraic extension of $\KK(x_{c+1},\ldots,x_n)$,
	where $c={\rm codim}(P)$.
The relative Weyl algebra $D_{n,c} = \KK \langle x_1,\ldots,x_n,\partial_{x_1},\ldots,\partial_{x_c} \rangle$
consists of all linear differential operators with polynomial coefficients, where only derivatives
for the first $c$ variables appear.
Every operator $A = A(\mathbf{x}, \partial_\mathbf{x})$ in  $D_{n,c}$ is a unique $\KK$-linear combination  of {\em standard monomials}  $\,\mathbf{x}^\alpha \partial_\mathbf{x}^\beta = x_1^{\alpha_1} \cdots x_n^{\alpha_n} \partial_{x_1}^{\beta_1} \cdots \partial_{x_c}^{\beta_c}$, where $\alpha \in \mathbb{N}^n$, $\beta \in \NN^c$.
We write $A \bullet f$ for the natural action of $D_{n,c}$ on
polynomials $f  \in R$, which is defined by
$$   x_i \bullet f = x_i \cdot f \quad {\rm and} \quad \partial_{x_i} \bullet f = \partial f / \partial x_i . $$

Consider elements $A_1,\ldots,A_m$ in the relative Weyl algebra $D_{n,c}$. These specify
\begin{equation}
\label{eq:fromAtoQ}
Q \,\,=\,\, \big\{\, f \in R \,: A_l \bullet f \in P \;\text{ for } \,
l = 1,2,\ldots,m	\,\big\}.
\end{equation}
The set $Q$ is a $\KK$-vector space. But, in general, the subspace $Q$ is not an ideal in~$R$.

\begin{example} \label{ex:leftright}
	Fix $n=m=2$, $P = \langle x_1,x_2 \rangle$ and $A_1 = \partial_{x_1}$.
	If  $A_2 = \partial_{x_2}$ then
	$Q $ is the space of polynomials $f$ in $\KK[x_1,x_2]$
	such that $x_1$ and $x_2$ do not appear in the expansion of~$f$.
	That space is not an ideal. 
	But, if $A_2 = 1$ then the formula
	(\ref{eq:fromAtoQ}) gives the ideal $\,Q = \langle x_1^2, x_2 \rangle$.
\end{example}

\begin{remark}
	\label{rem:contains_power}
	The space $Q$ always contains a power of $P$. Namely,
	by the product rule of calculus, if $k$
	is the maximal order among the operators $A_i$
	then $P^{k+1} \subseteq Q$. 
	\end{remark}

We next present a necessary and sufficient condition
for $m$ operators in $D_{n,c}$ to specify a primary ideal via (\ref{eq:fromAtoQ}).
We abbreviate $S= \KK(x_{c+1},\ldots,x_n)[x_1,\ldots,x_c]$.
The point in (\ref{eq:leftequalsright}) below
is that the relative Weyl algebra $D_{n,c}$ is both a left $R$-module and a right $R$-module.

\begin{theorem} \label{thm:leftequalsright}
	The space $Q$ is a $P$-primary ideal in the 
	polynomial ring $R$ if and only~if
	\begin{equation}
	\label{eq:leftequalsright}
\!\!	 A_i \cdot x_j\,\in \,
	S\cdot \{A_1,\ldots,A_m\} + P \cdot S\langle\partial_{x_1},\dots,\partial_{x_c}\rangle
	\quad \text{for}\, \, i =1,\ldots,m \,\text{and} \,j =1,\ldots,n.
	\end{equation}
\end{theorem}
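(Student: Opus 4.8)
The plan is to interpret condition~(\ref{eq:leftequalsright}) as precisely the statement that the $\FF$-span $V \subseteq \FF\otimes_R D_{n,c}$ of the images of $A_1,\dots,A_m$ is closed under the \emph{right} $R$-action described in~(\ref{eq:rightaction}), and then to run this through the correspondences of Theorem~\ref{thm:main}. First I would observe that $Q$ as defined in~(\ref{eq:fromAtoQ}) depends only on the left $\FF$-subspace $V = \FF\cdot\{A_1,\dots,A_m\} \subseteq \FF\otimes_R D_{n,c}$: indeed, membership $f\in Q$ says $A_l\bullet f \in P$ for all $l$, and since $\bullet$ followed by reduction modulo $P$ is $\FF$-linear on the left (the left action~(\ref{eq:leftaction}) is just multiplication of coefficients in $\FF$), we get that $A\bullet f \in P$ for every $A\in V$, and conversely. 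So $Q = \{f\in R : A\bullet f \equiv 0 \text{ in }\FF \text{ for all } A\in V\}$ depends on $V$ only. The ring $S = \KK(x_{c+1},\dots,x_n)[x_1,\dots,x_c]$ enters because $S\langle\partial_{x_1},\dots,\partial_{x_c}\rangle$ is a natural intermediate ring between $D_{n,c}$ and $\FF\otimes_R D_{n,c}$, and the right-hand side of~(\ref{eq:leftequalsright}) is exactly the preimage in $D_{n,c}\cdot x_j$ (really in $S\langle\partial\rangle$) of $V$ under the localization/reduction map $D_{n,c}\to \FF\otimes_R D_{n,c}$; thus~(\ref{eq:leftequalsright}) is equivalent to: $V$ is stable under right multiplication by each $x_j$, i.e.\ $V$ is an $R$-sub-bimodule of $\FF\otimes_R D_{n,c}$.

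Granting that translation, the forward direction goes as follows. If $Q$ is a $P$-primary ideal of some multiplicity $m'$, then by Theorem~\ref{thm:main} it corresponds to an $m'$-dimensional $\FF$-subspace $W$ of $\FF\otimes_R D_{n,c}$ that is an $R$-bimodule, and—chasing the bijection between~(a) and~(d), which is realized by exactly the recipe~(\ref{eq:fromAtoQ})/(\ref{eq:fromAtoQ})—one checks that $Q$ is recovered from $W$ the same way it is from $V$. Since $W$ is characterized as $\{$operators annihilating $f$ modulo $P$ for all $f\in Q\}$, we get $V\subseteq W$; and because any $\FF$-subspace giving rise to the same $Q$ via~(\ref{eq:fromAtoQ}) must sit inside the full annihilator space $W$ while $W$ itself gives back $Q$, one concludes $V = W$ (here one uses that $W$ is saturated: it equals the set of all operators that kill $Q$ mod $P$). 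Hence $V$ is an $R$-bimodule, which is condition~(\ref{eq:leftequalsright}). For the converse: suppose~(\ref{eq:leftequalsright}) holds, so $V$ is a finite-dimensional $\FF$-subspace of $\FF\otimes_R D_{n,c}$ that is an $R$-bimodule. Let $m' = \dim_\FF V \le m$. Applying part~(d)$\Rightarrow$(a) of Theorem~\ref{thm:main} to $V$ produces a $P$-primary ideal $Q'$ of multiplicity $m'$, and by the ``Moreover'' clause of Theorem~\ref{thm:main} any $\FF$-basis of $V$ lifts to Noetherian operators representing $Q'$ via~(\ref{eq:fromAtoQ}). Since $A_1,\dots,A_m$ $\FF$-span $V$, the space they cut out via~(\ref{eq:fromAtoQ}) is the same $Q'$ (again by the left-$\FF$-linearity argument above), so $Q = Q'$ is $P$-primary, as desired.

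The main obstacle I anticipate is making the first paragraph's translation fully rigorous: one must verify carefully that reducing an element of $D_{n,c}$ modulo $P$ along the right is compatible with the commutation identities~(\ref{eq:rightaction}), i.e.\ that the right-hand side of~(\ref{eq:leftequalsright}) really is the full preimage of $V + (\text{right } R\text{-translates of } V)$ and not something smaller, and that no elements outside $S\langle\partial\rangle$ are needed—the point being that clearing the (finitely many) denominators appearing in an $\FF$-basis of $V$ lands everything in $S\langle\partial\rangle$, so testing the bimodule condition over $S$ suffices. A secondary subtlety is the saturation claim used in the forward direction—that the bimodule $W$ attached to $Q$ by Theorem~\ref{thm:main} is exactly the set of \emph{all} operators annihilating $Q$ modulo $P$, not merely one choice of $m$ of them; this is really a restatement of the bijectivity in Theorem~\ref{thm:main}(a)$\leftrightarrow$(d), so it can be invoked rather than reproved. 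Everything else—the $\FF$-linearity of $\bullet\bmod P$ on the left, the identification of $S\langle\partial\rangle$ as the relevant intermediate ring, the bookkeeping with standard monomials $\mathbf{x}^\alpha\partial_\mathbf{x}^\beta$—is routine once the framework of Section~\ref{sec2} is in place.
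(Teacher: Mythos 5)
Your translation of condition (\ref{eq:leftequalsright}) into the statement that the $\FF$-span $V$ of the images of $A_1,\dots,A_m$ in $\FF\otimes_R D_{n,c}$ is an $R$-sub-bimodule is correct, and it is exactly how the paper reads the condition. Your converse direction (condition $\Rightarrow$ $Q$ primary) is also sound, though it routes through the full strength of Theorem~\ref{thm:main}(d)$\Rightarrow$(a); the paper instead gives a self-contained elementary argument --- first $Q$ is an ideal because $A_i\bullet(x_jf)=(A_i x_j)\bullet f$, then primality by choosing an operator $A$ of minimal order with $A\bullet g\notin PS$ and inducting on the order of the commutator $Af-fA$ --- which has the advantage of not depending on the machinery of Sections~\ref{sec4}--\ref{sec6}.

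The genuine gap is in the forward direction. From ``$V$ cuts out $Q$ via (\ref{eq:fromAtoQ})'' and ``$W$ is the full annihilator bimodule attached to $Q$ by Theorem~\ref{thm:main}'' you correctly get $V\subseteq W$, but the step ``one concludes $V=W$'' is a non-sequitur: a priori a proper $\FF$-subspace of $W$ could cut out the same $Q$, and the bijectivity of (a)$\leftrightarrow$(d) does not exclude this, because that bijection only concerns $m$-dimensional subspaces that \emph{are} $R$-bimodules --- your $V$ is not yet known to be a bimodule (that is precisely what you must prove) nor to have dimension equal to the multiplicity of $Q$. The missing statement is exactly the paper's Lemma~\ref{lem_prim_ideal_implies_bimod}: if $\mathcal{E}$ is any finite-dimensional $\FF$-subspace of $\Diff_{R/\KK}(R,\FF)$ whose solution set is a $P$-primary ideal, then $\mathcal{E}$ is automatically an $R$-bimodule. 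Its proof is a genuine duality computation (dualize $\mathcal{E}'\hookrightarrow\Hom_\FF(\FF[\mathbf{y}]/\MM^m,\FF)$ and identify $\Hom_\FF(\mathcal{E}',\FF)\cong\FF[\mathbf{y}]/I$), not a restatement of the bijection. If you wish to avoid citing that lemma, you must supply an equivalent argument, for instance: pass to $S$, show ${\rm Sol}_S(V)=QS$ by clearing denominators (using that $V^\perp$ is an $\FF$-subspace), invoke the surjectivity of $S\to\FF[\mathbf{y}]/\MM^m$ from Proposition~\ref{prop_corespondence_primary_ideals} so that taking perps is injective on subspaces, and conclude $V=W$ by double annihilation. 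As written, your forward direction assumes its conclusion.
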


In  Example \ref{ex:leftright} with $\{A_1,A_2\} = \{\partial_{x_1},\partial_{x_2} \}$
we have $ R = S$. Here $Q$ is not an ideal, and (\ref{eq:leftequalsright}) fails indeed for  $i=j=1$.
To see this, we note 
$  \partial_{x_1} x_1 \not\in R\cdot \{\partial_{x_1},\partial_{x_2}\} + \langle x_1,x_2 \rangle \cdot R\langle \partial_{x_1},\partial_{x_2}\rangle$.
It would be desirable to turn the criterion in Theorem \ref{thm:leftequalsright} into a practical algorithm.

\begin{proof}[Theorem \ref{thm:leftequalsright}]
	Suppose  (\ref{eq:leftequalsright}) holds and let $f \in Q$.
	By hypothesis, there exist $\,h_1,\ldots,h_m \in S$ such that
	$\,A_i x_j  \,=\, \sum_{k=1}^m h_k \,A_k \,$ modulo $P \cdot S\langle\partial_{x_1},\dots,\partial_{x_c}\rangle$.
	Since $A_k \bullet f \in P$, we see that
	$A_i \bullet (x_j f) = (A_i \,x_j) \bullet f$ lies in $P$ for all $i,j$.
	Hence $x_j f \in Q$. So, $Q$ is an ideal.
	
The following direct argument shows that $Q$ is $P$-primary.
	Let $f,g \in R$ such that $f \cdot g \in Q$ and $g \not\in Q$.
	We claim that $f \in P$. 
	We select an operator $A$ of minimal order among those inside $S\cdot \{A_1,\ldots,A_m\} + P \cdot S\langle\partial_{x_1},\dots,\partial_{x_c}\rangle$ that satisfy $A \bullet g \not\in PS$.
	The element $\,A \bullet (fg) \,=\, f \cdot (A \bullet g) \,+\, (A f - f A) \bullet g \,$ lies in $PS$. 
	The commutator $A f - f A$
	is a differential operator of order smaller than that of $A$.
	By (\ref{eq:leftequalsright}), it is inside $S\cdot \{A_1,\ldots,A_m\} + P \cdot S\langle\partial_{x_1},\dots,\partial_{x_c}\rangle$. 
	This ensures that $(A f - f A) \bullet g$ is in $PS$.
	We conclude that $f \cdot (A \bullet g) \in PS$. 
	But, we know that $A \bullet g$ is not in $PS$, and hence $f$ is in the prime ideal $P$.
	Remark~\ref{rem:contains_power} ensures that $\sqrt{Q}$ contains $ P$.
	Our argument shows that $Q$ is primary with $\sqrt{Q} = P$.
	The if-direction follows.
	
	For the only-if-direction we utilize the isomorphism in Remark~\ref{rem_isom_restrict_Weyl_mod} and Lemma~\ref{lem_prim_ideal_implies_bimod}.
	The condition (\ref{eq:leftequalsright})
	is equivalent to the bimodule condition in Lemma~\ref{lem_prim_ideal_implies_bimod}.
\end{proof}

The next theorem is a key  ingredient in the Ehrenpreis-Palamodov theory. 
Our result in the previous section provides a proof that is of independent interest.

\begin{theorem}[Noetherian operators] \label{thm_Noeth_ops}
	For every $P$-primary ideal $Q$ of multiplicity $m$ over $P$, there exist $A_1,\ldots,A_m$
	in the relative Weyl algebra $D_{n,c}$ such that (\ref{eq:fromAtoQ}) holds.
\end{theorem}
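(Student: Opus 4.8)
The plan is to build the Noetherian operators $A_1,\dots,A_m$ directly from Theorem~\ref{thm:main}, exploiting the chain of bijections (a)$\leftrightarrow$(b)$\leftrightarrow$(c)$\leftrightarrow$(d) and the explicit lifting procedure described in~(\ref{eq:resultingDO}). Concretely, given a $P$-primary ideal $Q$ of multiplicity $m$, I first pass through the correspondence~(\ref{eq:corr12}) to the point $I = \langle y_1,\dots,y_c\rangle^m + \gamma(Q)\FF[y_1,\dots,y_c] \in {\rm Hilb}^m(\FF[[y_1,\dots,y_c]])$, where $\gamma$ is the inclusion in~(\ref{eq_map_gamma}). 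Then I take the inverse system $V = I^\perp$, an $m$-dimensional $\FF$-subspace of $\FF[z_1,\dots,z_c]$ closed under differentiation, and I fix an $\FF$-basis $B_1,\dots,B_m$ of $V$. Clearing denominators, I write each $B_i(\mathbf{u},\mathbf{z}) = \sum_{|\alpha|\le m}\lambda_{i,\alpha}(\mathbf{u})\mathbf{z}^\alpha$ with $\lambda_{i,\alpha} \in R$, and I form $A_i = \sum_{|\alpha|\le m}\lambda_{i,\alpha}(\mathbf{x})\partial_{x_1}^{\alpha_1}\cdots\partial_{x_c}^{\alpha_c} \in D_{n,c}$ exactly as in~(\ref{eq:resultingDO}).

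The heart of the argument is then to verify that these particular $A_i$ satisfy~(\ref{eq:fromAtoQ}), i.e. that $Q = \{f \in R : A_l\bullet f \in P \text{ for all } l\}$. The key observation is a compatibility between the differential pairing over $R$ and the differential pairing over $\FF$: for $f \in R$ and a standard monomial, the action of $\mathbf{x}^\alpha\partial_\mathbf{x}^\beta$ on $f$ followed by reduction modulo $P$ matches, via the substitution $x_i \mapsto y_i + u_i$, the action of $\partial_{\mathbf{z}}^\beta$ on the truncation of $\gamma(f)$ paired against the monomial $\mathbf{z}^\beta$. Spelling this out, $A_l \bullet f \in P$ for all $l$ should be equivalent to: every $B_l$ annihilates the image of $f$ under the map $R \to \FF[[y_1,\dots,y_c]]/\langle y\rangle^m \to \FF[\mathbf{z}]^{*}$, which by Macaulay duality~(\ref{eq:HilbBijection}) says precisely that $\gamma(f)$ lies in $I$, i.e. $f \in \gamma^{-1}(I) = Q$. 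So the proof reduces to a careful bookkeeping of how the two dual pairings transform under $\gamma$ and the residue-modulo-$P$ map.

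One subtlety deserves attention: the $\lambda_{i,\alpha}$ are only well-defined modulo $P$ as elements of $\FF$, yet $A_i \in D_{n,c}$ requires actual polynomial coefficients, and the "clearing denominators" step introduces a common multiple $b(\mathbf{u}) \in R\setminus P$. I need that multiplying the $B_i$ through by such a $b$ does not change the ideal $Q$ defined by~(\ref{eq:fromAtoQ}): this holds because $b$ is a unit in $R_P$, hence in $\FF$, so it only rescales the $\FF$-span, and moreover since $b \notin P$, the condition $A_i \bullet f \in P$ is insensitive to replacing $A_i$ by $b\cdot A_i$ — indeed $(bA_i)\bullet f = b\,(A_i\bullet f)$ and $P$ is prime. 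Finally, I should note that the resulting $Q$ is automatically an ideal (not merely a $\KK$-subspace) because $V$ is closed under differentiation, which via Theorem~\ref{thm:leftequalsright} — or directly via the $R$-bimodule structure in Theorem~\ref{thm:main}(d) — is exactly condition~(\ref{eq:leftequalsright}); this closes the loop and shows the $A_i$ we produced are genuine Noetherian operators for $Q$.

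The main obstacle I anticipate is the precise translation of the dual pairing under the coordinate shift $\gamma$: one must check that truncating $\gamma(f)$ modulo $\langle y_1,\dots,y_c\rangle^m$ loses no information relevant to the pairing with $V$ (this is why the $+\langle y\rangle^m$ term appears in~(\ref{eq:corr12}), and it works because $\dim_\FF V = m$ forces $V \subseteq \FF[z]_{<m}$ in a suitable sense), and that the residue-mod-$P$ operation on coefficients commutes with applying the constant-coefficient operators $\partial_\mathbf{z}^\beta$. Both are elementary once set up correctly, but the indexing is delicate, so I would isolate the compatibility as a standalone lemma before assembling the proof.
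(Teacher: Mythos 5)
Your proposal is correct and follows essentially the same route as the paper: the paper's own proof of Theorem~\ref{thm_Noeth_ops} is a short derivation from Theorem~\ref{thm:main} (choose a basis of the $R$-bimodule in part~(d) and invoke the ``Moreover'' clause), while you additionally unpack how that clause works --- the chain $Q \mapsto I \mapsto I^\perp \mapsto \{A_i\}$ via (\ref{eq:corr12}), (\ref{eq:HilbBijection}) and (\ref{eq:resultingDO}), the denominator-clearing exactly as in Remark~\ref{rem_clear_fractions_diff_ops}, and the pairing compatibility that is the content of Proposition~\ref{lem_descrip_diff_opp} and Theorem~\ref{thm_noeth_ops_zero_dim}. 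The compatibility lemma you flag as the remaining bookkeeping is precisely what Sections~\ref{sec4}--\ref{sec6} supply, so nothing in your outline would fail.
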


\begin{proof}
	Theorem \ref{thm_Noeth_ops} follows from Theorem~\ref{thm:main},
	to be proved in the next three sections.~Indeed, if we are given a $P$-primary ideal $Q$ of multiplicity $m$ over $P$, then  $Q$ %corresponds to 
	specifies~an $m$-dimensional $R$-bimodule inside the $\FF$-vector space
	$\FF \otimes_R D_{n,c}$. We choose elements
	$A_1,\ldots,A_m$ in $D_{n,c}$ whose images form an
	$\FF$-basis for that $R$-bimodule.
	These operators satisfy (\ref{eq:fromAtoQ}).
\end{proof}

Following Palamodov \cite{PALAMODOV}, we call $A_1,\ldots,A_m$ the {\em Noetherian operators} that encode the primary ideal $Q$.
It is an essential feature that these are linear differential operators with polynomial coefficients.
Operators with constant coefficients do not suffice. 
In other words, the Weyl algebra is essential in describing primary ideals. 
This key point is due to Palamodov. 
It had  been overlooked initially by Gr\"obner and Ehrenpreis.
For instance, consider the ideal $Q$ for $n=4, m=3$ in the Introduction. 
Three Noetherian operators $A_1,A_2,A_3$ are given in (\ref{eq:twistedcubic6}), and it is instructive to verify condition~(\ref{eq:leftequalsright}).
Algorithms for passing back and forth between Noetherian operators and ideal generators of $Q$ will be presented in Section \ref{sec8}.

Our problem is to solve a homogeneous system of linear PDE with constant coefficients. 
This is given by the generators of a primary ideal $Q$ in $\KK[x_1,\ldots,x_n]$,
where $x_j$ stands for the differential operator $\partial_{z_j} = \partial / \partial z_j$ with respect to a new unknown $z_j$.
Our aim is to characterize all sufficiently differentiable functions $\psi(z_1,\ldots,z_n)$ that are solutions to these PDE.
This characterization is the content of the Ehrenpreis-Palamodov Theorem, to be stated below.
Note that, if we are given an arbitrary system $J \subset R$ of such PDE then we can reduce to the case discussed here by computing a primary decomposition of the ideal $J$.

For the analytic discussion that follows, we work over the field $\KK = \CC$ of complex numbers.
Suppose $Q = \langle p_1,p_2,\ldots,p_r \rangle$, where $p_k = p_k(\mathbf{x})$.
This determines a  system of $r$ linear PDE:
\begin{equation}
\label{eq:mustsolvethis}
\qquad p_k(\partial_\mathbf{z})\bullet \psi(\mathbf{z})\,\, =\,\, 0 \qquad \text{ for }  k=1,2,\ldots,r.
\end{equation}
Let $\mathcal{K} \subset \RR^n$ be a compact convex set.
We seek all functions 
$\psi(\mathbf{z})$ in
$\, C^\infty(\mathcal{K})\,$ that satisfy (\ref{eq:mustsolvethis}).
Here we also use vector notation, namely $\mathbf{z} = (z_1,\ldots,z_n)$ and
$\partial_\mathbf{z} = (\partial_{z_1},\ldots,\partial_{z_n})$.
According to Theorem \ref{thm_Noeth_ops}, there exist Noetherian operators
$A_1(\mathbf{x},\partial_\mathbf{x}),\ldots,A_m(\mathbf{x},\partial_\mathbf{x})$
which encode the primary ideal $Q$ in the sense of (\ref{eq:fromAtoQ}).
In symbols, $\,	A_l( \mathbf{x}, \partial_\mathbf{x}) \bullet f \in P\,$ for all $l$.

Each $A_l$ is an element in the relative Weyl algebra $D_{n,c}$, given
as a unique $\CC$-linear combination of standard monomials
$\,\mathbf{x}^\alpha \partial_\mathbf{x}^\beta $.  This is important
since $D_{n,c}$ is non-commutative.  We now replace $\partial_\mathbf{x}$ by $\mathbf{z}$
in the standard monomials. This results in commutative polynomials 
\begin{equation}
\label{eq:thisresults}
B_l (\mathbf{x},\mathbf{z}) \,\, := \,\, 
A_l(\mathbf{x},\partial_\mathbf{x})|_{\partial_{x_1} \mapsto z_1,\ldots,
	\partial_{x_c} \mapsto z_c}
\qquad {\rm for} \quad l=1,2,\ldots,m .
\end{equation}
We call $B_1,\ldots,B_m$ the {\em Noetherian multipliers}
of the primary ideal $Q$. These
are polynomials in $n+c$ variables, obtained by reinterpreting the
Noetherian (differential) operators.
Note that $B_1,\ldots,B_m$ span
the  inverse system in Theorem \ref{thm:main}~(c) when viewed inside $\FF[z_1,\ldots,z_c]$.

\begin{example}
	The Noetherian operators and  multipliers in the Introduction~are
	\begin{equation}
	\label{eq:NoetMult}
	\begin{matrix}
	A_1 \,=\, 1\,,\;\, A_2\,=\, \partial_{x_1}
	\,\,\,{\rm and} \,\,\,A_3 \,=\, \partial_{x_1}^2 \,-\, 2 \,x_2\,\partial_{x_2}, \\
	B_1 \,=\, 1\,,\;\,\, B_2\,=\, z_1\,
	\,\,\,{\rm and}\, \,\,\,B_3 \,=\, z_1^2 \,-\, 2 \,x_2\,z_2. \qquad
	\end{matrix}
	\end{equation}
	This is consistent with (\ref{eq:twistedcubic5}) because
	$x_2 = st^2$ holds on the variety $V(P)$. 
\end{example}

Here is now the celebrated result on  linear PDE with constant coefficients:

\begin{theorem}[Ehrenpreis-Palamodov Fundamental Principle]
	\label{thm:Palamodov_Ehrenpreis}
	Fix the system (\ref{eq:mustsolvethis}) of PDE
	given by the $P$-primary ideal $Q$.
	Any solution $\psi$   in $C^\infty(\mathcal{K})$ has an integral representation
	\begin{equation}
	\label{eq:anysolution}
	\psi(\mathbf{z}) \,\,\,= \,\,\, \sum_{l=1}^m\,\int_{V(P)} \!\! B_l\left(\mathbf{x},\mathbf{z}\right) 
	\exp\left( \mathbf{x}^t \,\mathbf{z} \right) d\mu_l(\mathbf{x})
	\end{equation}
	for suitable measures $\mu_l$ supported in $V(P)$.
	Conversely, such functions are solutions.
\end{theorem}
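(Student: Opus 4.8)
The plan is to reduce the statement to the purely algebraic content already secured in Section~\ref{sec2}, and then to invoke the classical analytic machinery of Ehrenpreis and Palamodov for the remaining functional-analytic step. First I would treat the case where $Q$ is $P$-primary, since the general case of an arbitrary ideal $J\subset R$ follows by computing a primary decomposition $J=Q_1\cap\cdots\cap Q_s$: a function $\psi$ solves the system given by $J$ if and only if it solves each subsystem given by $Q_i$, and one superimposes the integral representations obtained for each $Q_i$ (the measures for distinct associated primes being supported on distinct varieties $V(P_i)$). So fix a $P$-primary ideal $Q$ of multiplicity $m$, choose a maximal independent set of variables modulo $P$, and let $A_1,\dots,A_m\in D_{n,c}$ be Noetherian operators encoding $Q$ as in (\ref{eq:fromAtoQ}); these exist by Theorem~\ref{thm_Noeth_ops}. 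Passing $\partial_{x_i}\mapsto z_i$ yields the Noetherian multipliers $B_1,\dots,B_m$ of (\ref{eq:thisresults}).

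The heart of the matter is the claim that, for a fixed compact convex $\mathcal{K}\subset\RR^n$, the $C^\infty(\mathcal{K})$-solutions of (\ref{eq:mustsolvethis}) are exactly the functions admitting the representation (\ref{eq:anysolution}). The \emph{converse} (sufficiency) direction is the easy half: given measures $\mu_l$ supported on $V(P)$, one differentiates under the integral sign and must check that for each generator $p_k$ of $Q$,
$$
p_k(\partial_\mathbf{z})\bullet\Bigl(B_l(\mathbf{x},\mathbf{z})\,\exp(\mathbf{x}^t\mathbf{z})\Bigr)
$$
vanishes identically for $\mathbf{x}\in V(P)$. By the Leibniz rule this expression equals $\bigl(A_l(\mathbf{x},\partial_\mathbf{x})\bullet p_k\bigr)(\mathbf{x})\cdot\exp(\mathbf{x}^t\mathbf{z})$ up to reindexing—precisely the ``formal adjointness'' between multiplying by $\mathbf{z}$ and differentiating by $\partial_\mathbf{x}$ that underlies the correspondence (c)$\leftrightarrow$(d) in Theorem~\ref{thm:main}. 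Since $p_k\in Q$, we have $A_l\bullet p_k\in P$, so this restricts to zero on $V(P)$; integrating against $\mu_l$ and summing over $l$ gives $p_k(\partial_\mathbf{z})\bullet\psi=0$. Thus every function of the form (\ref{eq:anysolution}) is a solution.

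The \emph{necessity} direction is the genuine obstacle, and here I would not reprove the analysis from scratch but cite it: it is exactly the Fundamental Principle as established by Palamodov \cite{PALAMODOV} (see also \cite[Chapter~8]{BJORK} and \cite{HORMANDER}). Its proof proceeds by Fourier--Laplace transform, turning the PDE system into the condition that $\widehat\psi$, an entire function of exponential type with the indicator function of $\mathcal{K}$ as its growth bound, lies in the submodule of the Paley--Wiener space generated by $Q$; one then represents elements of the quotient module by restriction of the transform and finitely many of its ``derivatives transverse to $V(P)$,'' the transverse directions being encoded by the Noetherian operators, and inverts the transform via a measure-valued integral over $V(P)$. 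The only new input our paper supplies to this scheme is the algebraic description of those transverse derivatives: our Theorem~\ref{thm:main} and Theorem~\ref{thm_Noeth_ops} furnish the Noetherian operators $A_l$, and hence the multipliers $B_l$, in an explicit and basis-free way over the function field $\FF$, without Noether normalization. Consequently the output (\ref{eq:anysolution}) of the classical argument is literally written in terms of our $B_l$. I would therefore present the necessity direction as a citation-backed reduction: \emph{the analytic core is \cite{PALAMODOV}; what is new is that the kernels appearing there are the Noetherian multipliers produced by Theorem~\ref{thm:main}}, and the multiplicity $m$ of $Q$ over $P$ is exactly the number of multipliers needed. This completes the proof.
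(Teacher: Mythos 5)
Your proposal matches the paper's own treatment of this theorem: both defer the hard necessity direction to the classical analytic literature (Palamodov, and \cite[Theorem 1.3, page 339]{BJORK}, modulo the analysts' convention $D_{z_j}=-i\partial_{z_j}$, which the paper absorbs by substituting $\mathbf{x}\mapsto -i\mathbf{x}$ and changing variables in the measures), and both dispose of the converse by differentiating under the integral sign. One small caution on your converse computation: the identity is not literally $p_k(\partial_\mathbf{z})\bullet\bigl(B_l\,e^{\mathbf{x}^t\mathbf{z}}\bigr)=(A_l\bullet p_k)\,e^{\mathbf{x}^t\mathbf{z}}$ --- the exponential-shift formula gives $e^{\mathbf{x}^t\mathbf{z}}\cdot\bigl[p_k(\mathbf{x}+\partial_{\mathbf{z}})B_l\bigr]$, a polynomial in $\mathbf{z}$ whose coefficients are $A\bullet p_k$ for $A$ ranging over derivatives of $A_l$, so to conclude vanishing on $V(P)$ you must also invoke that the $\FF$-span of $A_1,\dots,A_m$ is closed under differentiation, i.e.\ the bimodule property from Theorem~\ref{thm:main}(c)--(d).
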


We refer to \cite{BJORK,EHRENPREIS,PALAMODOV} for
the precise statement and its proof. In what follows we give a brief outline of the key idea.
	We follow the conventions used in analysis
	(cf.~\cite[Chapter 8]{BJORK}) and we write our system in terms of
	the differential operators $D_{z_j} = -i\partial_{z_j}$, where
	$i=\sqrt{-1}$. We can account for this in the Noetherian multipliers 
	by replacing $\mathbf{x}$ with $-i \mathbf{x}$. It is shown in
	\cite[Theorem 1.3, page 339]{BJORK} that any solution
	in $C^\infty(\mathcal{K})$ to the system (\ref{eq:mustsolvethis})  can be written~as
	$$ \psi(\mathbf{z}) \,\,=\,\, \sum_{l=1}^m\int_{V(P)} B_l\left(-i\mathbf{x},\mathbf{z}\right)
	\exp\left(-i \mathbf{x}^t \, \mathbf{z} \right) d\mu_l(\mathbf{x}). 		$$ 	
	We can now change variables, by incorporating the multiplication with $-i$ into the measures,	to get the formula  (\ref{eq:anysolution}).
	Conversely, to see that any such integral $\psi(\mathbf{z})$ is a solution to the PDE (\ref{eq:mustsolvethis}) given by $Q$, we differentiate under the integral sign and 				use the Fourier transform.

Theorem \ref{thm:Palamodov_Ehrenpreis} offers a finite 
representation of the infinite-dimensional space of solutions to any system of
linear PDE with constant coefficients. To reach this representation,
the given system is first decomposed into its
primary components. For each primary ideal $Q$, we then
 compute the Noetherian multipliers $B_1,\ldots,B_m$.
 The $B_i$ are polynomials in $({\bf x},{\bf z})$ that form 
 an $\mathbb{F}$-basis of the inverse system  in
 part (c) of Theorem~\ref{thm:main}. The computation can be carried
out in practise by running our {\tt Macaulay2} code that is
 described in Section \ref{sec8}. One uses Algorithm \ref{alg:forward}, 
 followed by a final step that modifies the
output $A_1,\ldots,A_m$ as in (\ref{eq:thisresults}).

\begin{example}
	\label{ex:NoetMult}
	Consider the  PDE determined by the ideal $Q$ in 
	the Introduction. The Noetherian multipliers in
	(\ref{eq:NoetMult}) furnish integral representations for all  solutions:
	\begin{align*}
			\psi(\mathbf{z}) \,\,= \,\,
		\int_{V(P)} \!\!\!\! \exp\left(\mathbf{x}^t \mathbf{z}\right) d\mu_1(\mathbf{x}) \,&+\,
		\int_{V(P)}\!\!\!\!  z_1 \exp\left(\mathbf{x}^t \mathbf{z}\right) d\mu_2(\mathbf{x}) \, \\&+\,
		\int_{V(P)} \!\!\!\! (z_1^2-2x_2 z_2) \exp\left(\mathbf{x}^t \mathbf{z}\right) d\mu_3(\mathbf{x}).
	\end{align*}
	Here $\mu_1,\mu_2,\mu_3$ are measures supported on the variety
	$\,V(P) = \bigl\{ \,(s^2t, s t^2,s^3, t^3) \,:\, s,t \in \CC \,\bigr\}$.
	The assertion in (\ref{eq:twistedcubic5}) is obtained by 
	pulling the integrals back to the 
	$(s,t)$-plane via the parametrization of $V(P)$.
	This replaces the measures $\mu_i$ by their pull-backs to that plane.
For a concrete solution take $\mu_1=\mu_2=0$ and
$\mu_3$ the	Dirac measure at $(2,3)$. In analogy to the step below (\ref{eq:twistedcubic4}),
this yields the solution
$\,\psi(\mathbf{z}) = (z_1^2 - 36 z_2) \,{\rm exp} (12 z_1 + 18 z_2 + 8 z_3 + 27 z_4)$.
\end{example}

We close by stating some differential equations that are more difficult to solve.
They depend on a parameter $k$, and the challenge arises when $k$ increases.
This example will be used in Section \ref{sec8} to illustrate
our algorithms and to demonstrate the scope of our implementation.

\begin{example} \label{ex:september17}
For any integer $k \geq 1$,
we are interested in functions $\psi(z_1,z_2,z_3,z_4)$ that~are annihiliated by
the $k$-fold application of the differential operators in (\ref{eq:twistedcubic2}). 
Thus, our system~is
$$
\biggl(\frac{\partial^2}{ \partial z_1^2} - \frac{\partial^2}{ \partial z_2 \partial z_3} \biggr)^{\! k} \! \psi({\bf z})
\,\, = \,\,
\biggl(\frac{\partial^2}{ \partial z_1 \partial z_2} - \frac{\partial^2}{ \partial z_3 \partial z_4} \biggr)^{\! k} \! \psi({\bf z})
\,\, = \,\,
\biggl(\frac{\partial^2}{ \partial z_2^2} - \frac{\partial^2}{ \partial z_1 \partial z_4} \biggr)^{\! k} \! \psi({\bf z})
\,\, = \,\, 0 .
$$
We want solutions $\psi({\bf z})$ that are {\em non-degenerate} in the sense
that $\psi({\bf z})$ cannot be annihilated by repeated differentiation. 
In symbols, we impose the restriction
\begin{equation}
\label{eq:restriction}
 \frac{ \partial^{i_1+\cdots+i_4}\, \psi}{\partial z_1^{i_1} \cdots \partial z_4^{i_4}} \,\,
\not= \,\, 0 \quad \hbox{for any} \,\,\, i_1,\ldots,i_4 \in \NN .
\end{equation}
The case $k=1$ is covered by (\ref{eq:twistedcubic4}).
In Example \ref{ex:september21} we explore the
 solutions for $k \geq 2$.

To model this problem with commutative algebra,
we start with the ideal
$$ 
J \,\, = \,\, \bigl\langle \,(x_1^2 - x_2 x_3)^k ,\,
(x_1 x_2 - x_3 x_4)^k, \,
(x_2^2 - x_1 x_4)^k \,\bigr\rangle.
$$
The radical of $J$ is the prime ideal $P$ of the twisted cubic.
The ideal $J$ also has 
three embedded primes, namely 
$\, \langle x_1, x_2, x_3 \rangle$, 
$\, \langle x_1, x_2, x_4 \rangle\,$ and
$\, \langle x_1, x_2, x_3,x_4 \rangle $.

We shall apply Theorem \ref{thm:Palamodov_Ehrenpreis}
to the $P$-primary component of $J$. This is the ideal 
\begin{equation}
\label{eq:ourintention}
Q \,\,=\,\, J : \langle x_1 x_2 x_3 x_4 \rangle^\infty.
\end{equation}
This saturation step models the restriction (\ref{eq:restriction}) to non-degenerate solutions.
\end{example}

\section{Hilbert Schemes and Inverse Systems}
\label{sec4}

Sections \ref{sec4}, \ref{sec5} and \ref{sec6} are devoted to the proof of our main result.
The details are quite technical. Complete understanding will require considerable experience in commutative algebra.
  In this section we provide a proof of the bijections between parts (a), (b) and (c) of Theorem~\ref{thm:main}. Here the key players are punctual 
Hilbert schemes and Macaulay's inverse systems.

We retain the notation from Sections~\ref{sec2} and \ref{sec3},
and we write $\pp=PS$ for the extension of our prime ideal $P$ in
$R = \KK[x_1,\ldots,x_c,x_{c+1},\ldots,x_n]$  to $S=\KK(x_{c+1},\ldots,x_n)[x_1,\ldots,x_c]$.
As before, we fix a maximal independent set of variables. After permuting variables, this set is
$\{x_{c+1},\ldots,x_n\}$. Thus, the field extension 
  $\KK(x_{c+1},\ldots,x_n) \hookrightarrow \FF = \text{Quot}(R/P)$ is algebraic.
 This implies that $\pp$ is a maximal ideal in $S$. 
Our first goal is to parametrize $P$-primary ideals of fixed multiplicity $m$ over $P$ by the punctual Hilbert scheme ${\rm Hilb}^m\bigl(\FF[[y_1,\ldots,y_c]]\bigr)$.
A special role is played by the inclusion map 
$\gamma:R \hookrightarrow \FF[y_1,\dots,y_c]$ in
(\ref{eq_map_gamma}). This induces an inclusion
$\,\gamma_S : S \hookrightarrow \FF[y_1,\ldots,y_c]$, also given by
$\,x_i \mapsto y_i+u_i $ for $i \leq c\,$
and $\,x_j \mapsto u_j$ for $j > c$.

\begin{remark}
	\label{rem_local_primary_ideals}
	Since $\KK[x_{c+1},\ldots,x_n] \cap P=0$, the canonical map $R \hookrightarrow S$ gives a bijection
	between $P$-primary ideals and $\pp$-primary ideals (see, e.g., \cite[Theorem 4.1]{MATSUMURA}).
\end{remark}

The homogeneous maximal ideal in $\FF[y_1,\dots,y_c]$ is 
denoted by $\MM=\langle y_1,\ldots,y_c\rangle$.
For any $f(\mathbf{x})=f(x_1,\ldots,x_n) \in P$, 
we have $f(\mathbf{u})=f(u_1,\ldots,u_n)=0$ in $ \FF$.
A Taylor expansion yields  
$$
f(\mathbf{u}+\mathbf{y})
\,\,=\,\,f(u_1+y_1,\ldots,u_c+y_c,u_{c+1},\ldots,u_n)
\,\,=\,\, \sum_{\substack{\lambda \in \NN^c\\\lvert\lambda \rvert > 0}} \frac{1}{\lambda_1!\cdots\lambda_c!}\frac{\partial^{\lvert\lambda\rvert} f}{\partial_{x_1}^{\lambda_1}\cdots\partial_{x_c}^{\lambda_c}}(\mathbf{u})\,\mathbf{y}^\lambda.
$$
This shows that $\gamma(P) \subseteq \MM$, and therefore
$\gamma_S( \pp) \subseteq \MM$.
The next proposition establishes a bijection 
between $\pp$-primary ideals containing $\pp^m$ and $\MM$-primary ideals containing $\MM^m$.

\begin{proposition}
	\label{prop_corespondence_primary_ideals}
	For all $m \ge 1$, the inclusion $\gamma_S$ induces 
	the isomorphism of local rings 
	$$
	S/\pp^m \,\,\xrightarrow{\cong}\,\, \FF[y_1,\ldots,y_c]/\MM^m.
	$$
\end{proposition}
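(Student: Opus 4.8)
The plan is to show that the ring map $\overline{\gamma_S}\colon S/\pp^m \to \FF[y_1,\ldots,y_c]/\MM^m$ induced by $\gamma_S$ is well-defined, injective, and surjective. Well-definedness is immediate: we showed $\gamma_S(\pp)\subseteq\MM$, hence $\gamma_S(\pp^m)\subseteq\MM^m$, so $\gamma_S$ descends to the quotients. The substance is the isomorphism, and the cleanest route is to produce an explicit two-sided inverse rather than to argue injectivity and surjectivity separately.

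First I would exhibit a candidate inverse $\delta\colon \FF[y_1,\ldots,y_c] \to S/\pp^m$. On the coefficient field $\FF = \text{Quot}(S/\pp)$ I use the fact that $S/\pp^m$ is a local ring with maximal ideal $\pp/\pp^m$ and residue field $\FF$; since $\pp/\pp^m$ is nilpotent, any element of $S/\pp^m$ lying over a unit of $\FF$ is a unit, so the composite $S/\pp^m \twoheadrightarrow \FF$ admits a ring-theoretic section — concretely, one lifts each $u_i\in\FF$ to $x_i\in S/\pp^m$ and checks that this extends to a well-defined map $\FF\to S/\pp^m$ because $\pp^m$ lies in the kernel of $S\to\FF$. (Alternatively, and perhaps more elementarily, one notes $S/\pp^m$ is a complete local ring whose residue field $\FF$ has characteristic zero, so Cohen's structure theorem gives a coefficient field; this is the same section.) I then set $\delta(y_i) := (x_i - u_i) \bmod \pp^m$, which lands in $\pp/\pp^m$, hence $\delta(\MM^m)\subseteq(\pp/\pp^m)^m = 0$, so $\delta$ factors through $\FF[y_1,\ldots,y_c]/\MM^m$.

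Next I would verify $\overline{\gamma_S}$ and $\delta$ are mutually inverse by checking the composites agree with the identity on generators: $\delta(\overline{\gamma_S}(x_i)) = \delta(y_i + u_i) = (x_i - u_i) + u_i = x_i$ for $i\le c$, and $\delta(\overline{\gamma_S}(x_j)) = \delta(u_j) = x_j$ for $j>c$ since the section sends $u_j \mapsto x_j$; conversely $\overline{\gamma_S}(\delta(y_i)) = \overline{\gamma_S}(x_i - u_i) = (y_i + u_i) - u_i = y_i$, and $\overline{\gamma_S}\circ\delta$ restricted to the coefficient field $\FF$ is the identity because $\overline{\gamma_S}$ fixes $\FF$ (it is $\FF$-linear by construction, as $u_i\mapsto u_i$). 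Since both rings are generated over $\FF$ by the respective variables, agreement on generators suffices.

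The main obstacle is the careful handling of the coefficient field $\FF$: one must be sure that the lift $\FF\to S/\pp^m$ is a genuine ring homomorphism and not merely an additive or $\KK$-linear map, since $\FF$ is a nontrivial algebraic extension of $\KK(x_{c+1},\ldots,x_n)$ rather than just $\KK$ itself. This is exactly where the hypothesis that $\{x_{c+1},\ldots,x_n\}$ is a maximal independent set — equivalently, that $\pp$ is a maximal ideal of $S$ — and the characteristic-zero assumption enter: they guarantee $S_\pp$ (hence its quotient $S/\pp^m$) is a complete local ring of equicharacteristic zero, so a coefficient field exists and is unique up to the relevant conjugacy. Once the section is in place, the rest is the routine generator-chase above.
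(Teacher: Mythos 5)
Your overall strategy --- building an explicit two-sided inverse of $\overline{\gamma_S}$ by first choosing a coefficient field $\sigma:\FF\to S/\pp^m$ and then sending $y_i\mapsto x_i-\sigma(u_i)$ --- is the right one; it is essentially what the cited sources (Brumfiel, Cid-Ruiz) do, whereas the paper itself only quotes them. But there is a genuine error in your construction of the section. The assignment $u_i\mapsto x_i$ does \emph{not} extend to a ring homomorphism $\FF\to S/\pp^m$ once $m\ge 2$: a well-defined map would require that every $f\in\pp$ (every polynomial relation $f(\mathbf{u})=0$ holding in $\FF$) satisfy $f(\mathbf{x})\in\pp^m$, and any $f\in\pp\setminus\pp^2$ violates this. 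Concretely, for $S=\QQ[x]$, $\pp=\langle x^2-2\rangle$, $m=2$, the class of $x$ is not a square root of $2$ in $S/\pp^2$, so $\sqrt2\mapsto x$ is not a ring map $\QQ(\sqrt2)\to S/\pp^2$. Your justification ``because $\pp^m$ lies in the kernel of $S\to\FF$'' runs in the wrong direction: it shows that the \emph{projection} $S/\pp^m\twoheadrightarrow\FF$ is well defined, not that it admits a section. The error is also internally visible: with $\sigma(u_i)=x_i$ your formula gives $\delta(y_i)=x_i-x_i=0$, so $\delta$ would annihilate all of $\MM/\MM^m$ and could not be an inverse.

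The repair is to take $\sigma$ to be the coefficient field produced by Hensel's lemma: $S/\pp^m$ is a complete (Artinian) local ring, it already contains the field $\KK(x_{c+1},\ldots,x_n)$, and $\FF$ is a \emph{separable} algebraic extension of that field (this is where ${\rm char}\,\KK=0$, i.e.\ perfectness, enters), so the embedding $\KK(x_{c+1},\ldots,x_n)\hookrightarrow S/\pp^m$ extends uniquely to a ring map $\sigma:\FF\to S/\pp^m$ splitting the projection. One then has $\sigma(u_i)\equiv x_i\pmod{\pp/\pp^m}$ but in general $\sigma(u_i)\neq x_i$, and $\delta(y_i):=x_i-\sigma(u_i)$ lies in $\pp/\pp^m$ as needed. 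Your verification of $\overline{\gamma_S}\circ\delta=\mathrm{id}$ must be adjusted accordingly: the claim that ``$\overline{\gamma_S}$ fixes $\FF$ because $u_i\mapsto u_i$'' is exactly the false identification $\sigma(u_i)=x_i$ again. What is actually needed is that $\overline{\gamma_S}\circ\sigma$ coincides with the canonical inclusion $\FF\subset\FF[y_1,\ldots,y_c]/\MM^m$, and this follows from the \emph{uniqueness} of the Henselian lift: both maps are sections of $\FF[\mathbf{y}]/\MM^m\to\FF$ restricting to the same embedding of $\KK(x_{c+1},\ldots,x_n)$. With these corrections the generator-chase you describe does close up.
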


\begin{proof}
	This result also appeared in \cite[Proposition 4.1]{BRUMFIEL_DIFF_PRIM} and \cite[Proposition 3.9]{NOETH_OPS}. In these sources the field
	 $\KK$ is assumed to be  perfect.  This holds here since ${\rm char}(\KK) {=} 0$.
\end{proof}

\begin{remark}
	\label{rem_ident_Hilb}
	(i) An ideal of colength $m$ in $\FF[[y_1,\ldots,y_c]]$ 
	contains the ideal $\langle y_1,\ldots,y_c \rangle^m $.
	So, ${\rm Hilb}^m\bigl(\FF[[y_1,\ldots,y_c]]\bigr)$ can be identified with 
	$\,{\rm Hilb}^m\bigl(\FF[[y_1,\ldots,y_c]]/ \langle y_1,\ldots,y_c \rangle^m \bigr).	$
	\noindent
	(ii) Any $ \langle y_1,\ldots,y_c\rangle$-primary ideal 
	of colength $m$ in the polynomial ring $\FF[y_1,\ldots,y_c]$ 
	contains the ideal $\langle y_1,\ldots,y_c \rangle^m $.
		For all $m>0$, we have the natural isomorphism 
	$$	\frac{\FF[[y_1,\ldots,y_c]]}{{ \langle y_1,\ldots,y_c \rangle}^m}
	\,\,\cong \,\,\frac{\FF[y_1,\ldots,y_c]}{{ \langle y_1,\ldots,y_c \rangle }^m}.	$$
	Hence, the $\langle y_1,\ldots,y_c \rangle$-primary ideals 
	of colength $m$ in $\FF[y_1,\ldots,y_c]$  are parametrized by 
	${\rm Hilb}^m \bigl( \FF[[y_1,\ldots,y_c]] \bigr)$.	
	From now on, $\langle y_1,\ldots,y_c \rangle $-primary ideals in the polynomial ring
	$\FF[y_1,\ldots,y_c]$ will automatically be identified with
	ideals in the power series ring $\FF[[y_1,\ldots,y_c]]$.
\end{remark}

We now prove the correspondence between 
parts (a) and (b) in Theorem~\ref{thm:main}.

\begin{theorem}
	\label{thm:param_primary}
	As asserted in (\ref{eq:corr12}), there is a bijective correspondence 
	$$
	\begin{array}{ccc}
	\left\lbrace\begin{array}{c}
	\mbox{$P$-primary ideals of $R$}\\
	\mbox{with multiplicity $m$ over $P$}
	\end{array}\right\rbrace
	
	& \longleftrightarrow &
	
	\left\lbrace\begin{array}{c}
		\mbox{points in }{\rm Hilb}^m(\FF[[y_1,\ldots,y_c]])\\
	\end{array}\right\rbrace\\
	
	Q & \longrightarrow & I=\langle y_1,\dots,y_c\rangle^m+\gamma(
	Q)\FF[y_1,\dots,y_c]\\
	Q=\gamma^{-1}(I) & \longleftarrow & I.
\end{array}
$$
\end{theorem}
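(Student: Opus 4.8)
\textbf{Proof proposal for Theorem~\ref{thm:param_primary}.}

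The plan is to reduce everything to the local situation over $S$ and then invoke Proposition~\ref{prop_corespondence_primary_ideals}. First I would set up the correspondence in stages. By Remark~\ref{rem_local_primary_ideals}, $P$-primary ideals of $R$ of multiplicity $m$ over $P$ are in bijection with $\pp$-primary ideals of $S$ of colength $m$ over $\pp$ --- here one uses that the multiplicity of $Q$ over $P$ equals $\mathrm{length}(R_P/QR_P) = \mathrm{length}(S_\pp/QS_\pp)$, and since $\pp$ is maximal in $S$, $S_\pp/(QS) S_\pp = S/QS$; so ``multiplicity $m$ over $P$'' becomes ``colength $m$ in $S$''. Next, a $\pp$-primary ideal of colength $m$ contains $\pp^m$ (standard: the associated graded of $S_\pp/QS_\pp$ is generated in degree~$0$, killed after $m$ steps), so these are precisely the preimages under $S \twoheadrightarrow S/\pp^m$ of ideals of colength $m$ in the artinian local ring $S/\pp^m$. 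Now apply the isomorphism $S/\pp^m \xrightarrow{\cong} \FF[y_1,\ldots,y_c]/\MM^m$ of Proposition~\ref{prop_corespondence_primary_ideals}: ideals of colength $m$ in $S/\pp^m$ correspond to ideals of colength $m$ in $\FF[y_1,\ldots,y_c]/\MM^m$, which by Remark~\ref{rem_ident_Hilb}(i)--(ii) are exactly the points of $\mathrm{Hilb}^m(\FF[[y_1,\ldots,y_c]])$.

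The second task is to check that the composite of these bijections is given by the explicit formulas in the statement. In the forward direction, starting from $Q$, its image in $S$ is $QS$; the ideal $\MM^m + \gamma_S(QS)\FF[y_1,\ldots,y_c]$ is the preimage, under the quotient by $\MM^m$, of the image of $QS/\pp^m$ under the isomorphism --- this is exactly the claim that $\gamma_S$ induces $S/\pp^m \cong \FF[y_1,\ldots,y_c]/\MM^m$, together with the observation $\gamma_S(\pp) \subseteq \MM$ (already noted before the proposition via the Taylor expansion). Since $\gamma$ and $\gamma_S$ agree on $R$ (composing $R \hookrightarrow S$ with $\gamma_S$ recovers $\gamma$), we get $I = \langle y_1,\ldots,y_c\rangle^m + \gamma(Q)\FF[y_1,\ldots,y_c]$. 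For the reverse direction, one must verify $Q = \gamma^{-1}(I)$. I would argue: $\gamma^{-1}(I)$ is the preimage in $R$ of $I \cap \gamma_S(S)$, i.e.\ the preimage of the ideal of $S$ corresponding to $I/\MM^m$ under the inverse isomorphism; this is a $\pp$-primary ideal of $S$ of colength $m$, hence of the form $Q'S$ for a unique $P$-primary $Q'$ of multiplicity $m$ (Remark~\ref{rem_local_primary_ideals}), and $Q' = R \cap Q'S = \gamma^{-1}(I)$; by construction $Q' \mapsto I$, so $Q' = Q$.

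The main obstacle I expect is not any single deep step but the bookkeeping: making sure the three ``colength / multiplicity / contains the $m$-th power'' reformulations line up cleanly, in particular that passing between $R$, $S$, $\FF[y_1,\ldots,y_c]$ and its completion does not silently change the colength, and that $\gamma^{-1}$ of an ideal containing $\MM^m$ is computed correctly (one needs $\gamma^{-1}(\MM^m) \supseteq P$, which follows from $\gamma(P) \subseteq \MM$ and a degree count, and in fact one should check that $Q = \gamma^{-1}(I)$ does not depend on whether we work in $\FF[y_1,\ldots,y_c]$ or $\FF[[y_1,\ldots,y_c]]$ --- guaranteed by Remark~\ref{rem_ident_Hilb}). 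A secondary subtlety worth a sentence is that injectivity and surjectivity of the correspondence are both needed: injectivity because $Q$ is recovered as $\gamma^{-1}(I)$, and surjectivity because every point $I$ of the Hilbert scheme arises from some $Q$, which is immediate once we know $\gamma^{-1}(I)$ is $P$-primary of the right multiplicity. I would close by remarking that the explicit formulas here are exactly those used in Example~\ref{Bij:1,2}.
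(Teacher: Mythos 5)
Your proposal is correct and follows essentially the same route as the paper's proof: reduce to $S$ and $\pp$ via Remark~\ref{rem_local_primary_ideals}, apply the isomorphism $S/\pp^m \cong \FF[y_1,\ldots,y_c]/\MM^m$ of Proposition~\ref{prop_corespondence_primary_ideals} to get an inclusion-preserving, colength-preserving bijection between $\pp$-primary ideals containing $\pp^m$ and $\MM$-primary ideals containing $\MM^m$, and conclude with Remark~\ref{rem_ident_Hilb}. Your extra bookkeeping (why colength-$m$ primary ideals contain the $m$-th power, and why $\gamma$ and $\gamma_S$ give the same formulas) is implicit in the paper's argument and correctly filled in.
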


\begin{proof}
	The canonical map $R \hookrightarrow S$ gives a bijection between $P$-primary ideals and $\pp$-primary ideals (Remark~\ref{rem_local_primary_ideals}).
	Also, for any $P$-primary ideal $Q \subset R$ we have $R_P/QR_P \cong S_{\pp}/Q S_{\pp}$.
	So, nothing is changed if we take $S$ and $\pp$ instead of $R$ and $P$.
	We have the commutative~diagram 
	\begin{center}
		\begin{tikzpicture}[baseline=(current  bounding  box.center)]
		\matrix (m) [matrix of math nodes,row sep=3em,column sep=9em,minimum width=1.7em, text height=1.5ex, text depth=0.25ex]
		{
			S &  \FF[\mathbf{y}] \\
			S/\pp^{m} &  \FF[\mathbf{y}]/\MM^{m}. \\
		};
		\path[-stealth]
		(m-1-1) edge node [above]	{$\gamma_S$} (m-1-2)
		(m-2-1) edge node [above]	{$\cong$} (m-2-2)
		;		
		\path [draw,->>] (m-1-1) -- (m-2-1);
		\path [draw,->>] (m-1-2) -- (m-2-2);
		\end{tikzpicture}	
	\end{center}
	The map in the bottom row is the isomorphism in
	Proposition~\ref{prop_corespondence_primary_ideals}.
	This gives an inclusion-preserving bijection between $\pp$-primary ideals containing $\pp^m$ and $\MM$-primary ideals containing $\MM^{m}$, in particular, colength does not change under this correspondence.
	In explicit terms, the $\MM$-primary ideal $I$ corresponding to a $\pp$-primary ideal $QS \supseteq \pp^m$ is 
	$$	I \,\,=\,\, \MM^{m} \,+\,  \gamma_S(QS)\big(\FF[\mathbf{y}]\big).	$$
	And, the $\pp$-primary ideal $QS$ corresponding to an $\MM$-primary ideal $I \supseteq \MM^m$ is 
	$$	QS \,\,=\,\, \gamma_S^{-1}(I).	$$
	Finally, the result now follows from Remark~\ref{rem_ident_Hilb}.
\end{proof}

We next show the correspondence between parts (b) and (c) in Theorem~\ref{thm:main}.
This  follows from the usual Macaulay duality.
Although this argument is well-known, we will need a short discussion to later 
connect parts (c) and (d) of Theorem~\ref{thm:main}.
Consider the injective hull $E=E_{\FF[[y_1,\ldots,y_c]]}(\FF)$ of the residue field $\FF \cong \FF[[y_1,\ldots,y_c]]/\langle y_1,\ldots,y_c \rangle$ of $\FF[[y_1,\ldots,y_c]]$.
Since $\FF[[y_1,\ldots,y_c]]$ is a formal power series ring, this equals the  
module of inverse polynomials:
\begin{equation}
\label{eq_isom_E_inv_sys}
E \,\,\cong \,\, \FF[y_1^{-1},\ldots,y_c^{-1}].
\end{equation}
For a derivation see 
e.g.~\cite[Lemma 11.2.3, Example 13.5.3]{Brodmann_Sharp_local_cohom}
or \cite[Theorem 3.5.8]{BRUNS_HERZOG}.

Consider  $\FF[z_1,\ldots,z_c]$  as an $\FF[[y_1,\ldots,y_c]]$-module by setting that $y_i$ acts on $\FF[z_1,\ldots,z_c]$ as $\partial_{z_i}$, that is, $y_i \cdot F = \partial_{z_i} \bullet F$ for any $F \in \FF[z_1,\ldots,z_c]$.
Since  $\FF$ has characteristic zero, we have the following isomorphism of $\FF[[y_1,\ldots,y_c]]$-modules:
\begin{equation}
\label{eq_isom_inv_sys_char_zero}
\FF[y_1^{-1},\ldots,y_c^{-1}] \;\xrightarrow{\cong}\; \FF[z_1,\ldots,z_c], \quad \frac{1}{\mathbf{y}^{\alpha}} \;\mapsto\;  \frac{\mathbf{z}^\alpha}{\alpha!}.
\end{equation}

Now, Macaulay's duality is simply performed via Matlis duality.
We use ${\left(-\right)}^\vee$ to denote Matlis dual ${\left(-\right)}^\vee=\Hom_{\FF[[y_1,\ldots,y_c]]}\left(-,E\right)$.
This is  a contravariant exact functor which establishes an anti-equivalence between the full-subcategories of artinian $\FF[[y_1,\ldots,y_c]]$-modules and finitely generated $\FF[[y_1,\ldots,y_c]]$-modules (see, e.g., \cite[Theorem 3.2.13]{BRUNS_HERZOG}).
For any zero-dimensional ideal $I $ in $
\FF[[y_1,\ldots,y_c]]$, the isomorphisms (\ref{eq_isom_E_inv_sys}) and (\ref{eq_isom_inv_sys_char_zero}) together with Matlis duality
yield the  identifications
$$
I^{\perp} \,=\, \left\lbrace F\in \FF[z_1,\ldots,z_c]: f\bullet F=0 \mbox{ for all }
f\in I\right\rbrace \,\cong \, (0 :_{E} I) \,\cong \, \bigl(\FF[[y_1,\ldots,y_c]]/I \bigr)^\vee.
$$
On the other hand, consider any $\FF[[y_1,\ldots,y_c]]$-submodule $V$
of $ \FF[z_1,\ldots,z_c] \cong E$. Then $V$ is an $\FF$-subspace of $\FF[z_1,\ldots,z_c]$ that is closed by differentiation, 
as $y_i$ is identified with the operator $\partial_{z_i}$.
Again, the isomorphisms (\ref{eq_isom_E_inv_sys}) and (\ref{eq_isom_inv_sys_char_zero}) 
with Matlis duality give  identifications
$$
{\rm Ann}_{\FF[[\partial_{z_1},\dots,\partial_{z_c}]]}(V) \,\, \cong \,\, 
{\rm Ann}_{\FF[[y_1,\ldots,y_c]]}(V) \, \, \cong \,\,
{\bigl( E /V \bigr)}^{\vee} \, \,\subset \,\, \FF[[y_1,\ldots,y_c]].
$$
Hence, our discussion yields the connection between
(b) and (c) in Theorem~\ref{thm:main}.

\begin{theorem}[Macaulay's duality]
	\label{thm:Macaulay_dual}
	As asserted in (\ref{eq:HilbBijection}),
	there is a bijection
		$$
	\begin{array}{ccc}
	\left\lbrace\begin{array}{c}
	\mbox{points in }{\rm Hilb}^m\left(\FF[[\partial_{z_1},\ldots,\partial_{z_c}]]\right)\\
	\end{array}\right\rbrace
	
	& \longleftrightarrow &
	
	\left\lbrace\begin{array}{c}
		\mbox{$m$-dimensional $\FF$-subspaces of}\\
		\mbox{$\FF[z_1,\dots,z_c]$ closed by differentiation}
	\end{array}\right\rbrace\\

	I & \longrightarrow & V=I^\perp\\
	I={\rm Ann}_{\FF[[\partial_{z_1},\dots,\partial_{z_c}]]}(V) & \longleftarrow & V.\\
\end{array}
$$
\end{theorem}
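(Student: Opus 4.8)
The plan is to establish the two directions of the asserted bijection by showing that the maps $I \mapsto I^\perp$ and $V \mapsto {\rm Ann}_{\FF[[\partial_{z_1},\dots,\partial_{z_c}]]}(V)$ are mutually inverse and that they respect the numerical invariant (colength $m$ on the left corresponds to $\FF$-dimension $m$ on the right). The main engine is Matlis duality over the complete local ring $\FF[[y_1,\dots,y_c]]$, combined with the two explicit isomorphisms (\ref{eq_isom_E_inv_sys}) and (\ref{eq_isom_inv_sys_char_zero}) that identify the injective hull $E$ with $\FF[y_1^{-1},\dots,y_c^{-1}]$ and in turn with $\FF[z_1,\dots,z_c]$ (here characteristic zero is used, so $\alpha!$ is invertible).

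First I would record the setup: identify $\FF[z_1,\dots,z_c]$ with $E$ as an $\FF[[y_1,\dots,y_c]]$-module via $y_i \cdot F = \partial_{z_i} \bullet F$, so that an $\FF[[y_1,\dots,y_c]]$-submodule of $\FF[z_1,\dots,z_c]$ is precisely an $\FF$-subspace closed under differentiation. Then, for a zero-dimensional ideal $I$, the identifications already displayed in the text give
$$
I^{\perp} \;=\; (0 :_{E} I) \;\cong\; \bigl(\FF[[y_1,\dots,y_c]]/I\bigr)^{\vee},
$$
and conversely, for a finite-dimensional closed subspace $V$,
$$
{\rm Ann}_{\FF[[\partial_{z_1},\dots,\partial_{z_c}]]}(V) \;\cong\; {\rm Ann}_{\FF[[y_1,\dots,y_c]]}(V) \;\cong\; (E/V)^{\vee}.
$$
Because Matlis duality ${(-)}^\vee = \Hom_{\FF[[y_1,\dots,y_c]]}(-,E)$ is an exact contravariant anti-equivalence between artinian and finitely generated (here: noetherian, hence finite-length) modules, applying it twice is the identity: $(0:_E I) = (\FF[[y_1,\dots,y_c]]/I)^\vee$ has Matlis dual $\FF[[y_1,\dots,y_c]]/I$, whence its annihilator in $\FF[[y_1,\dots,y_c]]$ recovers $I$ (using that $I = {\rm Ann}(\FF[[y_1,\dots,y_c]]/I)$ since $\FF[[y_1,\dots,y_c]]$ is a quotient of itself by $I$, i.e. cyclic with annihilator $I$). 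Symmetrically, starting from $V$, $(E/V)^\vee$ is a submodule of $\FF[[y_1,\dots,y_c]]$ whose Matlis dual is $E/V$, and $(0 :_E (E/V)^\vee)$ is canonically $V$ because $V = (0:_E {\rm Ann}_E(E/V))$ — i.e. $V$ is reflexive as an artinian submodule of the Matlis-dualizing module $E$. I would spell out this last reflexivity step carefully: for an artinian module $M$, the natural map $M \to M^{\vee\vee}$ is an isomorphism, and submodules of $E$ correspond under $(-)^\vee$ to quotients of $\FF[[y_1,\dots,y_c]]$, hence to ideals; chasing $V \hookrightarrow E$ through $(-)^\vee$ and back gives $V$.

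Finally I would match the invariants: $\dim_\FF\bigl(\FF[[y_1,\dots,y_c]]/I\bigr) = {\rm length}\bigl(\FF[[y_1,\dots,y_c]]/I\bigr) = m$ since $\FF$ is the residue field, and Matlis duality (or directly the isomorphism $I^\perp \cong (\FF[[y_1,\dots,y_c]]/I)^\vee$, which is $\FF$-linear and dimension-preserving because $E^\vee$-duality over a field-graded situation preserves finite length) gives $\dim_\FF I^\perp = m$; the reverse direction is analogous, sending an $m$-dimensional $V$ to an ideal $I$ with $\dim_\FF \bigl(\FF[[y_1,\dots,y_c]]/I\bigr) = m$, i.e. colength $m$, so $I \in {\rm Hilb}^m$. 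Combining the inverse-map statement with the invariant match yields the claimed bijection. The only genuinely delicate point is the reflexivity/double-dual bookkeeping over the complete local ring — being precise that submodules of $E$ go to ideals and back to the same submodule, rather than merely an isomorphic one — but this is standard Matlis theory (\cite[Theorem 3.2.13]{BRUNS_HERZOG}) and the canonicity of the evaluation map $M \to M^{\vee\vee}$ makes it routine; everything else is a direct transport of structure along the already-established isomorphisms.
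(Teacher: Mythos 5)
Your proposal is correct and follows essentially the same route as the paper: the paper's proof is precisely the discussion preceding the theorem, which identifies $\FF[z_1,\dots,z_c]$ with the injective hull $E \cong \FF[y_1^{-1},\dots,y_c^{-1}]$ via (\ref{eq_isom_E_inv_sys}) and (\ref{eq_isom_inv_sys_char_zero}) and then invokes Matlis duality to get $I^\perp \cong (\FF[[y_1,\dots,y_c]]/I)^\vee$ and ${\rm Ann}(V) \cong (E/V)^\vee$. The only difference is that you spell out the mutual-inverse (double-dual/reflexivity) and length-preservation details that the paper leaves implicit in the word ``anti-equivalence''; this is a faithful elaboration, not a different argument.
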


\section{Differential Operators Revisited}
\label{sec5}

In this section we review some material on
differential operators in commutative algebra.
This is used in Section \ref{sec6} to complete
the proof of Theorem~\ref{thm:main}. Even though the
Noetherian operators  $A_i$ live in the Weyl algebra, 
we need the abstract perspective to link them to
the Weyl-Noether module (\ref{eq:relativeweyl}).
As before, $\KK$ is a field of characteristic zero and $R=\KK[x_1,\ldots,x_n]$.

Given $R$-modules $M$ and $N$, we regard $\Hom_\KK(M, N)$ as an $(R\otimes_\KK R)$-module~via
$$
\left((r \otimes_\KK s) \delta\right)(w)\,\, =\,\, r \delta(sw) \quad \text{ for all }\,\, \delta \in \Hom_\KK(M, N), \; w \in M,\; r,s \in R. 
$$ 
This is equivalent to saying that $\Hom_\KK(M,N)$ is an $R$-bimodule, where the action on the left is given by post-composing $(r \cdot \delta)(w)=r\delta(w)$ and the action on the right is given by pre-composing $(\delta \cdot s)(w)=\delta(sw)$, for all $\delta \in \Hom_\KK(M, N), \; w \in M,\; r,s \in R$.
We use the bracket notation $[\delta,r](w) = \delta(rw)-r\delta(w)$ for all $\delta \in \Hom_\KK(M, N)$, $r \in R$ and $w \in M$.

\begin{notation}
	\label{nota_T_mod_struct}
	We write $T=R \otimes_\KK R = \KK[x_1, \ldots, x_n, y_1, \ldots, y_n]$ as a polynomial ring in $2n$ variables, where $x_i$ represents $x_i \otimes_\KK 1$ and $y_i$ represents $1 \otimes_\KK x_i - x_i \otimes_\KK 1$.
	The action of $T$ on $\Hom_\KK(M,N)$ is thus as follows.
	For all $\delta \in \Hom_\KK(M, N)$ and $w \in M$, 
	$$
	(x_i \cdot \delta) (w) = x_i \delta(w) \; \text{ and } \;  (y_i \cdot \delta) (w)  = \delta(x_i w) - x_i\delta(w) = \left[\delta,x_i\right](w) \;\; \text{ for} \,\,i=1,\ldots,n.
	$$
\end{notation}

Any $T$-module is an $R$-module via the canonical map $R \hookrightarrow T, x_i  \mapsto x_i $.
Thus, any $T$-module has an $R$-module structure by using the left factor $R\otimes_\KK 1 \subset T = R \otimes_\KK R$.
The $\KK$-linear differential operators form a $T$-submodule of $\Hom_\KK(M, N)$,
as follows.

\begin{definition}
	\label{def_diff_ops}
	Let $M, N$ be $R$-modules.
	The \textit{$m$-th order $\KK$-linear differential operators} $\Diff_{R/\KK}^m(M, N) \subseteq \Hom_\KK(M, N)$ 
	form a $T$-module that is	 defined inductively~by:
	\begin{enumerate}[(i)]
		\item $\Diff_{R/\KK}^{0}(M,N) \,:=\, \Hom_R(M,N)$.
		\item $\Diff_{R/\KK}^{m}(M, N) \,:= \,
		\big\lbrace \delta \in \Hom_\KK(M,N) : [\delta, r] \in \Diff_{R/\KK}^{m-1}(M, N) 
		\,\text{ for all }\, r \in R \big\rbrace$.
	\end{enumerate}
	The set of all \textit{$\KK$-linear differential operators from $M$ to $N$} 
	is the $T$-module
	$$
	\Diff_{R/\KK}(M, N) \,\,:=\,\, \bigcup_{m=0}^\infty \Diff_{R/\KK}^m(M,N).
	$$
	Subsets $\mathcal{E} \subseteq \Diff_{R/\KK}(M, N)$ are viewed
	as differential equations, with solution~spaces
	\begin{equation}
	\label{eq:solE}
	{\rm Sol}(\mathcal{E}) \,\,:= \,\,\big\lbrace w \in M : \delta(w) = 0 \text{ for all } \delta \in \mathcal{E} \big\rbrace 
	\,\,= \,\,\bigcap_{\delta \in \mathcal{E} } {\rm Ker}(\delta).
	\end{equation}
\end{definition}

Following the approach in \cite[Section 2]{NOETH_OPS},
we now introduce  the module of principal parts.
By construction,
the ideal $ \Delta_{R/\KK} = \langle y_1, \ldots,y_n \rangle$
in $T$ is the kernel of the multiplication map 
$$
T = R \otimes_\KK R \,\rightarrow \,R\,, \quad 	r \otimes_\KK s \,\mapsto\, rs.
$$

\begin{definition}
	Given an $R$-module $M$,
	the module of \textit{$m$-th principal parts of $M$} equals
	$$
	P_{R/\KK}^m(M) \,\,:=\,\, \frac{R \otimes_\KK M}{\Delta_{R/\KK}^{m+1}  \left(R \otimes_\KK M\right)}.
	$$
	This is a  $T$-module. It has the natural map
	$\,d^m : \,M \rightarrow P_{R/\KK}^m(M),\,
	w \mapsto \overline{1 \otimes_\KK w}$.
	For $M=R$ we abbreviate
	$\,P_{R/\KK}^m \,:=\, P_{R/\KK}^m(R)=T/\Delta_{R/\KK}^{m+1}$,
	and the map becomes
	\begin{equation}
	\label{eq_univ_diff}
	d^m : R \rightarrow P_{R/\KK}^m, \;\;x_i \, \mapsto \,\overline{1 \otimes_\KK x_i} \,=\, \overline{x_i+y_i} .
	\end{equation}	
\end{definition}

The following result is a fundamental characterization of differential operators.

\begin{proposition}[{\cite[Proposition 16.8.4]{EGAIV_IV}, 
		\cite[Theorem 2.2.6]{AFFINE_HOPF_I}}]
	\label{prop_represen_diff_opp}
	Let $m\ge 0$ and let $M, N$ be $R$-modules.
	Then, the following map is an isomorphism of $R$-modules:
	\begin{align*}
	{\left(d^m\right)}^*\, :\, \Hom_R\left(P_{R/\KK}^m(M), N\right) 
	&\,\,\xrightarrow{\cong} \,\,\Diff_{R/\KK}^m(M, N), \\
	\varphi  \quad &\,\,\mapsto \,\quad \varphi \circ d^m.
	\end{align*}
\end{proposition}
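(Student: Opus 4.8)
\textbf{Proof plan for Proposition \ref{prop_represen_diff_opp}.}
The plan is to prove this by an induction on the order $m$, exploiting the universal property encoded in the tower of principal parts modules $P_{R/\KK}^m(M)$. First I would set up the well-definedness of the map ${\left(d^m\right)}^*$: given an $R$-linear map $\varphi\colon P_{R/\KK}^m(M)\to N$, I need to check that $\varphi\circ d^m$ actually lies in $\Diff_{R/\KK}^m(M,N)$, not just in $\Hom_\KK(M,N)$. This is the content of an explicit computation with the bracket: for $r\in R$ and $w\in M$, one computes $[\varphi\circ d^m,r](w)=\varphi\bigl(d^m(rw)-r\,d^m(w)\bigr)$, and by the definition of $d^m$ and the $T$-module structure on $P_{R/\KK}^m(M)$ (Notation \ref{nota_T_mod_struct}), the element $d^m(rw)-r\,d^m(w)$ is annihilated by $\Delta_{R/\KK}^{m}$, i.e.\ it factors through $P_{R/\KK}^{m-1}(M)$; hence $[\varphi\circ d^m,r]$ is a differential operator of order $\le m-1$. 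This shows ${\left(d^m\right)}^*$ lands in the right place.

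Next I would construct the inverse, which is where the universal property of $d^m$ is the real engine. Given $\delta\in\Diff_{R/\KK}^m(M,N)$, I want a factorization $\delta=\varphi\circ d^m$ with $\varphi$ $R$-linear. The $\KK$-bilinear map $R\times M\to N$, $(r,w)\mapsto r\,\delta(w)$, induces a $\KK$-linear map $R\otimes_\KK M\to N$; the claim is that it kills $\Delta_{R/\KK}^{m+1}(R\otimes_\KK M)$ precisely because $\delta$ has order $\le m$, so it descends to an $R$-linear map $\varphi\colon P_{R/\KK}^m(M)\to N$ with $\varphi\circ d^m=\delta$. Proving that $\Delta_{R/\KK}^{m+1}$ is annihilated is the step I expect to carry the most weight: it amounts to showing that a product $y_{i_0}\cdots y_{i_m}\cdot(1\otimes_\KK w)$ maps to $0$, and unwinding the $T$-action this is exactly the statement that the iterated bracket $[\cdots[[\delta,x_{i_0}],x_{i_1}]\cdots,x_{i_m}]$ vanishes on $w$, which is the defining property of order $\le m$ unpacked $m+1$ times. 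I would prove this by induction on $m$, using the defining recursion $\Diff^m=\{\delta:[\delta,r]\in\Diff^{m-1}\text{ for all }r\}$ and the compatibility $d^{m}$ versus $d^{m-1}$ under the surjection $P_{R/\KK}^m(M)\twoheadrightarrow P_{R/\KK}^{m-1}(M)$.

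Finally I would check that the two constructions are mutually inverse and $R$-linear. One direction, $\varphi\mapsto(\varphi\circ d^m)\mapsto\varphi$, follows because $P_{R/\KK}^m(M)$ is generated as an $R$-module (indeed as a $T$-module via the left factor) by the image of $d^m$ together with the $T$-action, so an $R$-linear map out of it is determined by its composite with $d^m$ once one also tracks the $R$-action coming from the $x_i$'s; the other direction, $\delta\mapsto\varphi\mapsto\varphi\circ d^m=\delta$, is immediate from the construction of $\varphi$. $R$-linearity of ${\left(d^m\right)}^*$ with respect to the post-composition (left) $R$-structure is formal. The base case $m=0$ is just the tautology $P_{R/\KK}^0(M)=R\otimes_\KK M/\Delta_{R/\KK}(R\otimes_\KK M)\cong M$ and $\Diff^0=\Hom_R$. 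The main obstacle, then, is purely bookkeeping: translating ``order $\le m$'' (an iterated-commutator condition) into ``annihilated by $\Delta_{R/\KK}^{m+1}$'' (an ideal-power condition) cleanly enough that the induction closes; since this is a standard result of Grothendieck–EGA, I would in practice simply cite \cite[Proposition 16.8.4]{EGAIV_IV} or \cite[Theorem 2.2.6]{AFFINE_HOPF_I} rather than reproduce the argument in full.
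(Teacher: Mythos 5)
The paper gives no proof of this proposition: it is quoted as a known result, with the proof deferred entirely to \cite[Proposition 16.8.4]{EGAIV_IV} and \cite[Theorem 2.2.6]{AFFINE_HOPF_I}, exactly as you propose to do in your final sentence. Your sketch of the underlying argument (translating ``order $\le m$'' into ``kills $\Delta_{R/\KK}^{m+1}(R\otimes_\KK M)$'' via iterated brackets, with induction on $m$) is a correct outline of the standard proof from those references, so your treatment is consistent with, and slightly more detailed than, the paper's.
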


This is a very general result for commutative rings $R$. What we are interested in here
is the polynomial ring $R = \KK[x_1,\ldots,x_n]$ over a field $\KK$
of characteristic zero. In this case, the $R$-module
$P_{R/\KK}^m=T/\Delta_{R/\KK}^{m+1}$ is free, and a basis
is given by $\bf y$-monomials of degree at most $m$:
\begin{equation}
\label{eq_direct_sum_Prin}
P_{R/\KK}^m \,\,\,= \,\,\bigoplus_{\lvert \alpha \rvert \le m} R\mathbf{y}^\alpha \quad =
\bigoplus_{\alpha_1 + \cdots + \alpha_r \le m} \!\!\!\! R y_1^{\alpha_1}\cdots y_n^{\alpha_n}.
\end{equation}
Proposition~\ref{prop_represen_diff_opp} implies that $\,\Diff_{R/\KK}^m(R, R)\, \cong\,
\Hom_R\bigl(P_{R/\KK}^m, R\bigr)\,$ is a free $R$-module with  basis
\begin{equation}
\label{eq_basis_diff_ops}
\big\{ {(y_1^{\alpha_1}\cdots y_n^{\alpha_n})}^* \circ d^m  : \alpha_1 + \cdots + \alpha_n \le m \big\}.
\end{equation}
For any polynomial $f \in R$, the operator $d^m$ in
(\ref{eq_univ_diff}) computes the Taylor expansion
$$
d^m(f(\mathbf{x})) \,\,= \,\,f(1\otimes_\KK \mathbf{x}) \,\,= \,\,f(\mathbf{x}+\mathbf{y}) \,\,=\,\,
\sum_{\lambda \in \NN^n} \left(D_\mathbf{x}^\lambda f\right)\!(\mathbf{x})\,\mathbf{y}^\lambda,
$$
where $\,D_\mathbf{x}^{\lambda}:R\rightarrow R\,$ is
the differential operator we all know from calculus:
$$
D_\mathbf{x}^{\lambda}\,\, =\,\, \frac{1}{\lambda!}\partial_\mathbf{x}^\lambda \,\,= \,\,
\frac{1}{\lambda_1!\cdots \lambda_n!}\partial_{x^1}^{\lambda_1}\cdots \partial_{x^n}^{\lambda_n}.
$$
For any $\alpha \in \NN^n$ we  have $
\left({(\mathbf{y}^\alpha)}^* \circ d^m\right)(f(\mathbf{x})) = \left(D_\mathbf{x}^\alpha f\right)(\mathbf{x})$. Equation (\ref{eq_basis_diff_ops})~implies
$$ \Diff_{R/\KK}^m(R,R)\,\, =\,\,\bigoplus_{\lvert \alpha \rvert \le m} R D_\mathbf{x}^\alpha
\,\,=\,\, \bigoplus_{\lvert \alpha \rvert \le m} R \partial_\mathbf{x}^\alpha. $$
By letting $m$ go to infinity, we now recover the Weyl algebra in its well-known role:

\begin{lemma}	\label{lem_Weyl_as_diff_ops}
	$\,\Diff_{R/\KK}(R, R)$ coincides with the  Weyl algebra $\KK\langle x_1,\ldots,x_n,\partial_{x_1},\ldots,\partial_{x_n} \rangle$.
\end{lemma}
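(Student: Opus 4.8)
The plan is to show both inclusions between $\Diff_{R/\KK}(R,R)$ and the Weyl algebra $W := \KK\langle x_1,\ldots,x_n,\partial_{x_1},\ldots,\partial_{x_n}\rangle$, where the latter acts on $R$ by $x_i \bullet f = x_i f$ and $\partial_{x_i} \bullet f = \partial f/\partial x_i$. First I would record, using the computation already carried out above, that for every $m \ge 0$ one has
$$
\Diff_{R/\KK}^m(R,R) \;=\; \bigoplus_{\lvert\alpha\rvert \le m} R\,\partial_{\mathbf x}^\alpha,
$$
so that taking the union over all $m$ gives $\Diff_{R/\KK}(R,R) = \bigoplus_{\alpha \in \NN^n} R\,\partial_{\mathbf x}^\alpha$ as a left $R$-module. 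Hence every differential operator is an $R$-linear combination of the standard monomials $\mathbf{x}^\beta\partial_{\mathbf x}^\alpha$, which are manifestly elements of $W$; this yields $\Diff_{R/\KK}(R,R) \subseteq W$ once we observe that the $R$-module structure used above on $\Hom_\KK(R,R)$ is exactly post-composition by multiplication, i.e. $x_i \cdot \delta$ is the operator $f \mapsto x_i\,\delta(f)$, matching left multiplication by $x_i$ in $W$.

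For the reverse inclusion $W \subseteq \Diff_{R/\KK}(R,R)$, I would check that each generator of $W$ lies in $\Diff_{R/\KK}(R,R)$ and that the latter is closed under the algebra operations (composition and $\KK$-linear combination), so that it contains the subalgebra generated by the $x_i$ and $\partial_{x_i}$. The operator "multiplication by $x_i$" is $R$-linear, hence in $\Diff_{R/\KK}^0(R,R)$. The operator $\partial_{x_i}$ satisfies $[\partial_{x_i}, r] = \partial r/\partial x_i$ (multiplication operator) for every $r \in R$, which is $R$-linear, so $\partial_{x_i} \in \Diff_{R/\KK}^1(R,R)$. Closure under composition follows from the general fact, immediate from the inductive definition via the bracket identity $[\delta\circ\delta', r] = [\delta,r]\circ\delta' + \delta\circ[\delta',r]$, that $\Diff^m \circ \Diff^{m'} \subseteq \Diff^{m+m'}$; closure under $\KK$-linear combinations is clear from the definition. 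Therefore $W \subseteq \Diff_{R/\KK}(R,R)$.

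Combining the two inclusions gives the asserted equality. The only genuinely delicate point — and the step I would be most careful about — is the bookkeeping around the two different $R$-module / bimodule structures: one must make sure that the left $R$-action on $\Diff_{R/\KK}(R,R)$ coming from Notation~\ref{nota_T_mod_struct} (post-composition with multiplication) is the one implicitly used when writing $\bigoplus_{\lvert\alpha\rvert\le m} R\,\partial_{\mathbf x}^\alpha$, and that this matches left multiplication in $W$, while the bracket/right-module structure is what encodes the $[\delta,r]$ recursion. Everything else is the routine verification above, and no new input beyond Proposition~\ref{prop_represen_diff_opp} and the explicit description of $P_{R/\KK}^m$ in characteristic zero is needed.
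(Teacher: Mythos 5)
Your proof is correct and follows essentially the same route as the paper, which derives $\Diff_{R/\KK}^m(R,R)=\bigoplus_{\lvert\alpha\rvert\le m}R\,\partial_{\mathbf x}^\alpha$ from Proposition~\ref{prop_represen_diff_opp} and the freeness of $P_{R/\KK}^m$, and then simply lets $m\to\infty$. Your separate verification of the reverse inclusion is sound (and the bracket identity you use is right), but it is redundant: since the displayed decomposition is an equality, any $\sum r_\alpha\partial_{\mathbf x}^\alpha\in W$ already lies in $\Diff_{R/\KK}^m(R,R)$ for $m$ large.
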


Let $J $ be an ideal in $R= \KK[x_1,\ldots,x_n]$. The canonical projection
$\pi : R \rightarrow R/J$ induces a natural map of differential operators.
This is the following homomorphism of $T$-modules:
\begin{equation}
\label{eq:pimap}
\Diff_{R/\KK}^m(\pi) \,: \,\Diff_{R/\KK}^m(R, R) \,\rightarrow \,
\Diff_{R/\KK}^m(R, R/J), \quad \delta \,\mapsto\, \pi \circ \delta.
\end{equation}

\begin{lemma}
	\label{lem_diff_ops_R/J}
	We have the following explicit description of the objects in (\ref{eq:pimap}):
	\begin{enumerate}[(i)]
		\item $\Diff_{R/\KK}^m(R, R/J)$ is a free $R/J$-module with direct summands decomposition 
		$$ \qquad
		\Diff_{R/\KK}^m(R,R/J) \,\,= \,\,\bigoplus_{\lvert \alpha \rvert \le m} (R/J) \overline{D_\mathbf{x}^\alpha}, \qquad {\rm where}\,\,\,
		\overline{D_\mathbf{x}^\alpha} = \pi \circ D_\mathbf{x}^\alpha.		$$
		\item The map $\Diff_{R/\KK}^m(\pi)$ is surjective. Explicitly, any differential operator 
		$$
		\epsilon \,\,=\, \sum_{\lvert \alpha \rvert \le m} \overline{r_\alpha}
		\overline{D_\mathbf{x}^\alpha} \,\in\, \Diff_{R/\KK}^m(R,R/J), \quad \text{ where } r_\alpha \in R,  
		$$ 
		lifts to an operator 
		$\,\delta=\sum_{\lvert \alpha \rvert \le m} r_\alpha D_\mathbf{x}^\alpha \,\in\, \Diff_{R/\KK}^m(R,R)\,$
		with $\,\epsilon = \Diff_{R/\KK}^m(\pi)(\delta)$.
	\end{enumerate}
\end{lemma}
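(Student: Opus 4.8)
The plan is to deduce both statements from Proposition~\ref{prop_represen_diff_opp} together with the freeness of the module of principal parts $P_{R/\KK}^m = T/\Delta_{R/\KK}^{m+1} = \bigoplus_{\lvert\alpha\rvert\le m} R\mathbf{y}^\alpha$ in characteristic zero, recorded in (\ref{eq_direct_sum_Prin}). Applying the isomorphism ${(d^m)}^*$ of Proposition~\ref{prop_represen_diff_opp} with $M = R$ and $N = R/J$ gives
$$\Diff_{R/\KK}^m(R, R/J) \;\cong\; \Hom_R\bigl(P_{R/\KK}^m, R/J\bigr),$$
and I would first observe that this is an isomorphism of $R/J$-modules: the left $R$-action on $\Diff_{R/\KK}^m(R, R/J)$ from Notation~\ref{nota_T_mod_struct} kills $J$, since for $r \in J$ and $\delta \in \Diff_{R/\KK}^m(R,R/J)$ one has $(r\cdot\delta)(w) = r\,\delta(w) = 0$ in $R/J$, so it factors through $R/J$, and ${(d^m)}^*$ is visibly compatible with these structures. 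Since $P_{R/\KK}^m$ is a finitely generated free $R$-module on the monomials $\mathbf{y}^\alpha$ with $\lvert\alpha\rvert\le m$, the Hom module is free over $R/J$ on the dual basis $\{\pi\circ{(\mathbf{y}^\alpha)}^*\}_{\lvert\alpha\rvert\le m}$. Transporting this basis through ${(d^m)}^*$ and using $({(\mathbf{y}^\alpha)}^*\circ d^m)(f) = (D_\mathbf{x}^\alpha f)(\mathbf{x})$ from Section~\ref{sec5}, I get ${(d^m)}^*(\pi\circ{(\mathbf{y}^\alpha)}^*) = \pi\circ D_\mathbf{x}^\alpha = \overline{D_\mathbf{x}^\alpha}$, which yields the decomposition in part~(i).

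For part~(ii), the key point is the commuting square relating $\Diff_{R/\KK}^m(\pi)$ to $\Hom_R(P_{R/\KK}^m, \pi)$: for $\varphi \in \Hom_R(P_{R/\KK}^m, R)$ one has $\Diff_{R/\KK}^m(\pi)(\varphi\circ d^m) = \pi\circ\varphi\circ d^m = {(d^m)}^*\bigl(\Hom_R(P_{R/\KK}^m,\pi)(\varphi)\bigr)$. Because $P_{R/\KK}^m$ is free, hence projective, the functor $\Hom_R(P_{R/\KK}^m, -)$ is exact, so $\Hom_R(P_{R/\KK}^m, \pi)$ is surjective; therefore $\Diff_{R/\KK}^m(\pi)$ is surjective. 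The explicit lift is then immediate: given $\epsilon = \sum_{\lvert\alpha\rvert\le m}\overline{r_\alpha}\,\overline{D_\mathbf{x}^\alpha}$ with $r_\alpha \in R$, the operator $\delta = \sum_{\lvert\alpha\rvert\le m} r_\alpha D_\mathbf{x}^\alpha \in \Diff_{R/\KK}^m(R,R) = \bigoplus_{\lvert\alpha\rvert\le m} R D_\mathbf{x}^\alpha$ satisfies $\Diff_{R/\KK}^m(\pi)(\delta) = \pi\circ\delta = \sum_{\lvert\alpha\rvert\le m}\overline{r_\alpha}\,\overline{D_\mathbf{x}^\alpha} = \epsilon$, using that $\pi\circ(r_\alpha D_\mathbf{x}^\alpha) = \overline{r_\alpha}\cdot(\pi\circ D_\mathbf{x}^\alpha)$.

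The argument is essentially bookkeeping once Proposition~\ref{prop_represen_diff_opp} is in hand; the only place that requires care is the identification of module structures --- confirming that ${(d^m)}^*$ intertwines the left $R$-module (equivalently $R/J$-module) structures on both sides, and that $\Diff_{R/\KK}^m(\pi)$ corresponds to $\Hom_R(P_{R/\KK}^m,\pi)$ under these isomorphisms. I expect this compatibility check to be the main (mild) obstacle; everything else reduces to the freeness of $P_{R/\KK}^m$ in characteristic zero.
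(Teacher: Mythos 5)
Your proposal is correct and follows essentially the same route as the paper: both rest on Proposition~\ref{prop_represen_diff_opp} and the freeness of $P_{R/\KK}^m = \bigoplus_{\lvert\alpha\rvert\le m} R\mathbf{y}^\alpha$, identify the dual basis elements with $\overline{D_\mathbf{x}^\alpha}$ via $({(\mathbf{y}^\alpha)}^*\circ d^m)(f) = (D_\mathbf{x}^\alpha f)(\mathbf{x})$, and deduce (ii) from (i). The only cosmetic difference is that the paper passes through the Hom-tensor adjunction and the base-changed module $R/J\otimes_R P_{R/\KK}^m$, whereas you compute $\Hom_R(P_{R/\KK}^m, R/J)$ directly as a free $R/J$-module; your added remark on projectivity for the surjectivity in (ii) is fine but not needed once the explicit lift is exhibited.
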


\begin{proof}
	$(i)$ From Proposition~\ref{prop_represen_diff_opp} and the Hom-tensor adjunction we get  isomorphisms 
	\begin{align}
	\label{eq_isoms_Diff_R/J}
	\begin{split}
	\Hom_{R/J}\left(R/J \otimes_R P_{R/\KK}^m, R/J\right) &
	\,\,\cong \,\, \Hom_R\left(P_{R/\KK}^m, \Hom_{R/J}\left(R/J, R/J\right)\right)\\
	& \,\,\cong \,\,\Hom_R\left(P_{R/\KK}^m, R/J\right)\\
	&\,\,\cong \,\,\Diff_{R/\KK}^m(R,R/J).
	\end{split}			
	\end{align}
	The isomorphism from the first row to the second row in (\ref{eq_isoms_Diff_R/J})  is given by 
	$$	\psi \in \Hom_{R/J}\left(R/J \otimes_R P_{R/\KK}^m, R/J\right) \;\;\mapsto \;\;\psi\circ h_m \in \Hom_R\left(P_{R/\KK}^m, R/J\right),
	$$
	where $\,h_m\,$ is the canonical map $\, P_{R/\KK}^m \rightarrow R/J \otimes_R P_{R/\KK}^m$.
	Therefore, the isomorphism from the first to the third row in
	(\ref{eq_isoms_Diff_R/J}) is given explicitly as
	$ \,\psi\, \mapsto \, \psi \circ h_m \circ d^m $.
	From (\ref{eq_direct_sum_Prin}) we get that $R/J \otimes_R P_{R/\KK}^m$ is a free $R/J$-module with decomposition 
	$$ R/J \otimes_R P_{R/\KK}^m \,\,=
	\,\, \bigoplus_{\lvert \alpha \rvert \le m} (R/J)\mathbf{y}^\alpha. $$
	Our  isomorphisms
	 (\ref{eq_isoms_Diff_R/J})
	show that $\Diff_{R/\KK}^m(R,R/J)$ is a free $R/J$-module with~basis
	\begin{equation*}
	\big\{ {(y_1^{\alpha_1}\cdots y_n^{\alpha_n})}^* \circ h_m \circ d^m  : \alpha_1 + \cdots + \alpha_n \le m \big\}.
	\end{equation*}
	Now, for any  polynomial $f(\mathbf{x})$ in $ R$, we obtain the equations
	\begin{align} \label{eq_diff_opp_z_alpha}
				\begin{split}
	& \left({(\mathbf{y}^\alpha)}^* \circ h_m \circ d^m\right)(f(\mathbf{x})) \,\,
	= \,\,\left({(\mathbf{y}^\alpha)}^* \circ h_m\right)\left(\sum_{\lambda \in \NN^n} \left(D_\mathbf{x}^\lambda f\right)(\mathbf{x})\mathbf{y}^\lambda\right)  \\
	& \,\,=\,\, \left({(\mathbf{y}^\alpha)}^*\right)\left(\sum_{\lambda \in \NN^n} \pi\left(\left(D_\mathbf{x}^\lambda f\right)(\mathbf{x})\right)\mathbf{y}^\lambda\right)
	\,\,=\,\,\,\pi\big(\left(D_\mathbf{x}^\alpha f\right)(\mathbf{x})\big).
	\end{split}		
	\end{align}
	This implies that the operators $\overline{D_\mathbf{x}^\alpha}=\pi \circ D_\mathbf{x}^\alpha$ with $\lvert\alpha\rvert \le m$ give a basis of $\Diff_{R/\KK}^m(R, R/J)$.
	Part	 $(ii)$ follows from part $(i)$, concluding the proof of Lemma~\ref{lem_diff_ops_R/J}.
\end{proof}

\begin{remark}
	Since  $R$ is a polynomial ring, the process of lifting differential operators is easy and explicit.
	However, the surjectivity of $\Diff_{R/\KK}^m(\pi)$ is a subtle property, and it is not always satisfied over more general types of rings. More precisely, there exist primary ideals in non-polynomial rings that cannot be described by Noetherian operators as in (\ref{eq:fromAtoQ}), which implies that $\Diff_{R/\KK}^m(\pi)$ is not surjective in general. For an illustration see \cite[Example 5.2]{NOETH_OPS} and \cite[Proof of Corollary 3.13]{NOETH_OPS}.
\end{remark}

\section{Proof of the Representation Theorem} 
\label{sec6}

We  here complete the proof of Theorem~\ref{thm:main}. This is done
 by connecting part (d) on the Weyl-Noether module with the earlier parts (a), (b), and (c).
This section is divided into two subsections.
In the first subsection we treat the zero-dimensional situation, where $c=n$.
In the second subsection, we use a maximal independents subset modulo $P$ and the results on differential operators in Section~\ref{sec5} to reduce the general case to the zero-dimensional case.

\subsection{The zero-dimensional case}
We here restrict ourselves to ideals in $R= \KK[x_1,\ldots,x_n]$
that are primary to a maximal ideal $P$.
Hence $c=n$ and $\FF=R/P$. Since the base field $\KK$ is assumed to have
characteristic zero, an adaptation of Gr\"obner's classical approach via Macaulay's inverse system will be valid.

Writing $\,T=R\otimes_\KK R=\KK[x_1,\ldots,x_n,y_1,\ldots,y_n]\,$
as in Section~\ref{sec5}, we now have
\begin{equation}
\label{eq:FFRT}
\FF\otimes_R T \,\,=\,\, \FF\otimes_R \left(R\otimes_\KK R\right) \,\,\cong\,\,
R/P \otimes_\KK \KK[y_1,\ldots,y_n] \,\,\cong\,\, \FF[y_1, \ldots, y_n].
\end{equation}
This endows $\Diff_{R/\KK}^{m}\left(R, \FF\right)$ with the structure of an $\FF[y_1,\ldots,y_n]$-module.
Applying Lemma~\ref{lem_diff_ops_R/J} with $J=P$, we see that 
$\Diff_{R/\KK}^{m}\left(R, \FF\right)$ is a finite-dimensional $\FF$-vector space.
In the sequel, the homogeneous maximal ideal
$
\MM = \langle y_1,\ldots,y_n \rangle \subset \FF[y_1,\ldots,y_n]
$
will play an important role.
This ideal is also  $\MM=\Delta_{R/\KK} \left( \FF \otimes_R T \right)$.
For any $m \ge 0$ we identify
$$ \frac{\FF[y_1,\ldots,y_n]}{\MM^{m+1}} \,\,\,=\,\,
\bigoplus_{\vert\alpha\rvert \le m} \FF\mathbf{y}^{\alpha}.
$$
For any $\FF[y_1,\ldots,y_n]$-module $M$, the $\FF$-dual $\Hom_{\FF}(M, \FF)$ 
is naturally a module over $\FF[y_1,\ldots,y_n]$ as follows: if $\psi \in \Hom_{\FF}(M, \FF)$, then 
$y_i \cdot \psi$ is the $\FF$-linear map $\,\psi(y_i \cdot -)  :  w \in M \mapsto \psi(y_iw) \in \FF$.
The next result relates submodules of $\Diff_{R/\KK}^{m}\left(R, \FF\right)$ to
$\MM$-primary ideals in $\FF[y_1,\ldots,y_n]$.

\begin{proposition}
	\label{lem_descrip_diff_opp}
	The following statements hold for all positive integers $m$:
	\begin{enumerate}[(i)]
		\item We have an isomorphism of $\FF[y_1,\ldots,y_n]$-modules
		$$\Diff_{R/\KK}^{m-1}\left(R, \FF\right)\,\, \cong \,\,
		\Hom_{\FF}\Bigg(\frac{\FF[y_1,\ldots,y_n]}{{\MM}^{m}}, \FF\Bigg).
		$$
		
		\item The following map gives  a bijective correspondence 
		between $\MM$-primary ideals $I $ in $ \FF[y_1,\ldots,y_n]$ 
		that contain $\MM^{m}$ 
		and $\FF[y_1,\ldots,y_n]$-submodules of $\,\Diff_{R/\KK}^{m-1}\left(R,\FF\right)$:
		\begin{equation}
		\label{eq:ImapHom}
		I  \;\mapsto\; \Hom_{\FF}\left(\frac{\FF[y_1,\ldots,y_n]}{I}, \FF\right).
		\end{equation}
		\item 		
		Let $\mathcal{E} \subseteq \Diff_{R/\KK}^{m-1}(R,\FF)$
		be the image under (\ref{eq:ImapHom}) of an $\MM$-primary ideal $I \supseteq \MM^{m}$. Then,
		with notation as in (\ref{eq:solE}),
		$$ {\rm Sol}(\mathcal{E}) \,\,=\,\, \gamma^{-1}(I), $$
		where
		$\gamma$ is the inclusion $ R  \hookrightarrow \FF[y_1,\ldots,y_n], x_i \mapsto y_i{+}u_i$
		in~(\ref{eq_map_gamma}). \end{enumerate}
\end{proposition}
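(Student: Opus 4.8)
The plan is to prove the three parts in order: part (i) is a base-change identification, part (ii) is an instance of Matlis--Macaulay duality over the Artinian ring $\FF[y_1,\ldots,y_n]/\MM^m$, and part (iii) follows by tracing through the isomorphism of part (i). For part (i), I would apply Proposition~\ref{prop_represen_diff_opp} with $M=R$, $N=\FF$ and order $m-1$ to obtain $\Diff_{R/\KK}^{m-1}(R,\FF) \cong \Hom_R\bigl(P_{R/\KK}^{m-1},\FF\bigr)$. Because $\FF = R/P$ carries its $R$-module structure through the quotient map, the Hom--tensor adjunction --- exactly as in the proof of Lemma~\ref{lem_diff_ops_R/J}(i) --- identifies the right-hand side with $\Hom_\FF\bigl(\FF \otimes_R P_{R/\KK}^{m-1},\FF\bigr)$. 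Finally $P_{R/\KK}^{m-1} = T/\Delta_{R/\KK}^{m}$, so applying $\FF \otimes_R -$ and using (\ref{eq:FFRT}) together with $\MM = \Delta_{R/\KK}(\FF \otimes_R T)$ gives $\FF \otimes_R P_{R/\KK}^{m-1} \cong \FF[y_1,\ldots,y_n]/\MM^{m}$. Composing yields the isomorphism of (i); checking that each step is $\FF[y_1,\ldots,y_n]$-linear is routine from the module structures fixed in Notation~\ref{nota_T_mod_struct} and at the opening of this section.

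For part (ii), the ring $\FF[y_1,\ldots,y_n]/\MM^{m}$ is a finite-dimensional local $\FF$-algebra, and $\Hom_\FF\bigl(\FF[y_1,\ldots,y_n]/\MM^{m},\FF\bigr)$ is its Matlis dual, which under the identifications of Section~\ref{sec4} is the submodule $(0 :_E \MM^{m})$ of the injective hull $E = E_{\FF[[y_1,\ldots,y_n]]}(\FF)$. Matlis duality is a contravariant equivalence, so the $\FF[y_1,\ldots,y_n]$-submodules of this module correspond bijectively with the quotients of $\FF[y_1,\ldots,y_n]/\MM^{m}$, that is, with ideals $I$ satisfying $\MM^{m} \subseteq I$; under this correspondence a submodule $V$ is the annihilator of $I/\MM^m$, which is canonically $\Hom_\FF\bigl(\FF[y_1,\ldots,y_n]/I,\FF\bigr)$. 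A proper such ideal $I$ is $\MM$-primary because $\FF[y_1,\ldots,y_n]/I$ is then Artinian local with nilpotent maximal ideal, and conversely every $\MM$-primary ideal containing $\MM^{m}$ arises in this way; this gives the asserted bijection. It is the same Macaulay-duality mechanism already used for Theorem~\ref{thm:Macaulay_dual}.

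For part (iii), I would first record the concrete form of the isomorphism in (i): if $\delta \in \Diff_{R/\KK}^{m-1}(R,\FF)$ corresponds to $\varphi \in \Hom_\FF\bigl(\FF[y_1,\ldots,y_n]/\MM^{m},\FF\bigr)$, then by Proposition~\ref{prop_represen_diff_opp} and the adjunction $\delta(f) = \varphi\bigl(1 \otimes_R d^{m-1}(f)\bigr)$ for every $f \in R$, where $1 \otimes_R d^{m-1}(f)$ denotes the image of $d^{m-1}(f)$ in $\FF \otimes_R P_{R/\KK}^{m-1} = \FF[y_1,\ldots,y_n]/\MM^{m}$. Since $d^{m-1}(f) = \overline{f(\mathbf{x}+\mathbf{y})}$ in $T/\Delta_{R/\KK}^{m}$ and the canonical map to $\FF[y_1,\ldots,y_n]/\MM^{m}$ sends $x_i \mapsto u_i$ and $y_i \mapsto y_i$, this image equals $\overline{f(\mathbf{u}+\mathbf{y})} = \overline{\gamma(f)}$, the class of $\gamma(f)$ modulo $\MM^{m}$, with $\gamma$ as in (\ref{eq_map_gamma}). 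Hence $\delta(f) = \varphi(\overline{\gamma(f)})$. Now let $\mathcal{E}$ be the image of $I$ under (\ref{eq:ImapHom}), so that $\mathcal{E} \subseteq \Hom_\FF(\FF[y_1,\ldots,y_n]/\MM^m,\FF)$ is the set of functionals vanishing on $I/\MM^m$. Then $f \in {\rm Sol}(\mathcal{E})$ iff $\varphi(\overline{\gamma(f)}) = 0$ for all such $\varphi$, i.e.\ iff $\overline{\gamma(f)}$ lies in the double annihilator of the subspace $I/\MM^m$. As $\FF[y_1,\ldots,y_n]/\MM^{m}$ is a finite-dimensional $\FF$-vector space, that double annihilator is $I/\MM^m$ itself, and since $I \supseteq \MM^{m}$ the condition $\overline{\gamma(f)} \in I/\MM^m$ is equivalent to $\gamma(f) \in I$, i.e.\ $f \in \gamma^{-1}(I)$. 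This yields ${\rm Sol}(\mathcal{E}) = \gamma^{-1}(I)$.

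The main obstacle is the compatibility statement opening part (iii): that the abstract chain of isomorphisms assembled in part (i) genuinely transports the concrete action $(\delta,f) \mapsto \delta(f)$ into the pairing $(\varphi, \overline{\gamma(f)}) \mapsto \varphi(\overline{\gamma(f)})$. This rests on identifying the base change of the universal operator $d^{m-1}$ with the Taylor expansion $\gamma(f) = f(\mathbf{u}+\mathbf{y})$, which is the computation recorded just before Proposition~\ref{prop_corespondence_primary_ideals}; the remaining work is making the various $\FF[y_1,\ldots,y_n]$-module identifications line up coherently, which is bookkeeping rather than a genuine difficulty. Parts (i) and (ii) themselves are formal once the module structures are pinned down.
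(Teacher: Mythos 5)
Your proposal is correct and follows essentially the same route as the paper: part (i) via Proposition~\ref{prop_represen_diff_opp}, the Hom--tensor adjunction from (\ref{eq_isoms_Diff_R/J}), and the identification $\FF \otimes_R P_{R/\KK}^{m-1} \cong \FF[\mathbf{y}]/\MM^m$; part (ii) by finite-dimensional duality over $\FF$; and part (iii) by tracing the universal map to the Taylor substitution $x_i \mapsto \overline{y_i+u_i}$. The only cosmetic difference is that in (iii) the paper factors through the surjection $\FF[\mathbf{y}]/\MM^m \twoheadrightarrow \FF[\mathbf{y}]/I$ and concludes directly, whereas you stay in $\FF[\mathbf{y}]/\MM^m$ and invoke the double annihilator, which is the same linear algebra.
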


\begin{proof}
	This is essentially \cite[Lemma 3.8]{NOETH_OPS}.
	We provide a proof for completeness.
	
	$(i)$ Since $\FF=R/P$, from equation (\ref{eq_isoms_Diff_R/J}) we obtain the isomorphism 
	$$
	\Diff_{R/\KK}^{m-1}(R, \FF)\,\, \cong \,\,\Hom_{\FF}\bigl(\FF \otimes_R P_{R/\KK}^{m-1}, \FF\bigr).
	$$
	Thus, the result follows from the fact that 
	$\,\FF \otimes_R P_{R/\KK}^{m-1} \,\cong\, \FF[\mathbf{y}]/ \MM^{m}	$.
	
	$(ii)$ Since $\FF[\mathbf{y}]/\MM^{m}$ is finite-dimensional over $\FF$, the functor $\Hom_{\FF}\left(-,\FF\right)$ gives a bijection between  quotients of $\FF[\mathbf{y}]/\MM^{m}$ and $\FF[\mathbf{y}]$-submodules of ${\Hom_{\FF}\left(\frac{\FF[\mathbf{y}]}{\MM^{m}},\FF\right)}$. So, the claim follows from~$(i)$.
	
	$(iii)$ By assumption, $\,\mathcal{E} \,=\,{\Hom_{\FF}\left(\frac{\FF[\mathbf{y}]}{I},\FF\right)} \,$
	is in $\,\Diff_{R/\KK}^{m-1}(R,\FF)$. Consider the canonical map 
	$ \,\Phi_I : \frac{\FF[\mathbf{y}]}{\MM^{m}}  \twoheadrightarrow \frac{\FF[\mathbf{y}]}{I}\,$
	given by the $\mathcal{M}$-primary ideal $I \supseteq \MM^{m}$.
	The isomorphism (\ref{eq_isoms_Diff_R/J}) yields
	$$	{\rm Sol}(\mathcal{E})  \,\,=\,\,
	\bigl\{ f \in R : \left(\psi \circ \Phi_I \circ h_{m-1} \circ d^{m-1}\right)(f)=0 
	\text{ for all } \psi \in {\Hom_{\FF}\left(\FF[\mathbf{y}]/I ,\FF\right)} \bigr\}.
	$$
	The composition $\Phi_I \circ h_{m-1} \circ d^{m-1}$ equals the map  
	$\,R \mapsto \FF[\mathbf{y}]/ I,\, x_i \mapsto \overline{y_i+u_i}$. Hence
	\begin{align*}
	{\rm Sol}(\mathcal{E}) &\,\,=\,\, \bigl\{\,f \in R : 
	\psi\bigl(\,\overline{f(
		\mathbf{y}+\mathbf{u}})\, \bigr)=0\, \text{ for all } \,\psi \in {\Hom_{\FF}\left(\FF[\mathbf{y}]/ I ,\FF\right)}\bigr\} \\
	&\,\,=\,\, \bigl\{\, f \in R : f\bigl(\mathbf{y}+\mathbf{u}\bigr) \in I \, \bigr\}
	\,\,=\,\, \gamma^{-1}(I).
	\end{align*}
	This completes the proof of  Proposition \ref{lem_descrip_diff_opp}.
\end{proof} 

Next, under the assumption of $P$ being maximal, we relate part (d) with the other parts in Theorem~\ref{thm:main}.
By  Definition~\ref{def_diff_ops} and Lemma~\ref{lem_Weyl_as_diff_ops},  the 
Weyl-Noether module has the filtration 
$$
\FF \,\otimes_R \, R\langle\partial_{x_1},\ldots,\partial_{x_n}\rangle \,\;= \;\, \FF \, \otimes_R \, \biggl(\lim\limits_{\substack{\longrightarrow\\m}} \Diff_{R/\KK}^m(R,R) \biggr) \,\;\cong \; \,\lim\limits_{\substack{\longrightarrow\\m}}\Big(\FF \otimes_R \Diff_{R/\KK}^m(R,R)\Big).
$$
Applying  Lemma~\ref{lem_diff_ops_R/J} with $J=P$ gives  $\FF \otimes_R \Diff_{R/\KK}^m(R,R) \cong \Diff_{R/\KK}^m(R,\FF) \cong \bigoplus_{\vert\alpha\rvert\le m}\FF \overline{\partial_{\mathbf{x}}^\alpha}$. 
This gives rise to the following isomorphisms of $\FF$-vector spaces:
\begin{equation}
\label{eq_isom_relWeyl_diff}
\FF \,\otimes_R \, R\langle\partial_{x_1},\ldots,\partial_{x_n}\rangle \;\cong \; \Diff_{R/\KK}(R,\FF) \;\cong \; \bigoplus_{ \alpha  \in \NN^n} \FF \overline{\partial_\mathbf{x}^\alpha}.
\end{equation}
When the Weyl-Noether module was introduced 
in (\ref{eq:relativeweyl}), we gave a purely algebro-symbolic treatment 
and we noticed that an $\FF$-basis is given by  $\,\left\lbrace 1 \otimes_R \partial_{\bf x}^\alpha: \alpha \in\NN^n\right\rbrace$.
Now, with  the isomorphism (\ref{eq_isom_relWeyl_diff}), the elements $1 \otimes_R \partial_{\mathbf{x}}^\alpha $
are seen as the differential operators $\overline{\partial_{\mathbf{x}}^\alpha} \in \Diff_{R/\KK}(R, \FF)$.

The following map is an isomorphism of $\FF$-vector spaces:
\begin{equation}
\label{eq_map_omega}
\omega : \FF[z_1,\ldots,z_n] \;\rightarrow\; \FF \,\otimes_R \,  R\langle\partial_{x_1},\ldots,\partial_{x_n}\rangle\;\cong\;\Diff_{R/\KK}(R,\FF), \quad \mathbf{z}^\alpha \mapsto \partial_{\mathbf{x}}^\alpha.
\end{equation}
From (\ref{eq:FFRT}) and Notation~\ref{nota_T_mod_struct} we get the following actions.
For $\alpha \in \NN^n$ and $1 \le i \le n$,
\begin{equation}
\label{eq_deriv_z_bracket_partial}
\!\! \partial_{z_i} \bullet \mathbf{z}^\alpha = \alpha_iz_1^{\alpha_1}\cdots z_i^{\alpha_i-1}\!\!\cdots z_n^{\alpha_n} \,\text{ and }\, y_i\cdot\partial_{\mathbf{x}}^\alpha
=\left[\partial_{\mathbf{x}}^\alpha,x_i\right]=
\alpha_i\partial_{x_1}^{\alpha_1}\cdots \partial_{x_i}^{\alpha_i-1}\!\!\cdots \partial_{x_n}^{\alpha_n}.
\end{equation}
Hence the map $\omega$ in (\ref{eq_map_omega}) gives a bijection between $\FF$-vector subspaces of $\FF[z_1,\ldots,z_n]$ closed under differentiation and $\FF[y_1,\ldots,y_n]$-submodules of $\FF \otimes_R  R\langle\partial_{x_1},\ldots,\partial_{x_n}\rangle$. The latter
structure as a submodule is equivalent to being an $R$-subbimodule of the Weyl-Noether module.

\begin{lemma} \label{lem_prim_ideal_implies_bimod}
	Let $\mathcal{E}$ be a finite dimensional $\FF$-vector subspace of $ \Diff_{R/\KK}(R,\FF)$.
	If $\,Q = {\rm Sol}(\mathcal{E})\,$ is a $P$-primary ideal in $R = \KK[x_1,\ldots,x_n]$
	then $\mathcal{E}$ is an $R$-bimodule.
\end{lemma}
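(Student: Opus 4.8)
The statement to establish is Lemma~\ref{lem_prim_ideal_implies_bimod}: if $\mathcal{E}$ is a finite-dimensional $\FF$-subspace of $\Diff_{R/\KK}(R,\FF)$ and $Q = {\rm Sol}(\mathcal{E})$ is a $P$-primary ideal, then $\mathcal{E}$ is an $R$-bimodule. Since $\mathcal{E}$ is by construction an $\FF$-vector space, and the left $R$-action on $\Diff_{R/\KK}(R,\FF)$ factors through the $\FF$-action (post-composition with multiplication in $\FF = R/P$), $\mathcal{E}$ is automatically a left $R$-module. The real content is that $\mathcal{E}$ is closed under the \emph{right} $R$-action, i.e.\ under pre-composition $\delta \mapsto \delta(x_j \cdot -)$ for each variable $x_j$. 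Equivalently, in the language of Notation~\ref{nota_T_mod_struct}, one must show $\mathcal{E}$ is closed under multiplication by each $y_j = [\,\cdot\,, x_j]$, so that $\mathcal{E}$ is a $T$-submodule, hence (via the isomorphism $\omega$ in (\ref{eq_map_omega}) and the remark following it) an $R$-subbimodule of the Weyl-Noether module.

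\textbf{Key steps.} First I would pass to the dual picture. Because $\mathcal{E}$ is a finite-dimensional $\FF$-subspace of $\Diff_{R/\KK}^{m-1}(R,\FF) \cong \Hom_\FF(\FF[\mathbf{y}]/\MM^m, \FF)$ for $m$ large enough (every operator in $\mathcal{E}$ has bounded order), Proposition~\ref{lem_descrip_diff_opp}(i)--(ii) lets me identify $\mathcal{E}$ with the $\FF$-dual of a \emph{quotient} $\FF[\mathbf{y}]/\mathcal{I}$ of $\FF[\mathbf{y}]/\MM^m$, where $\mathcal{I}$ is merely an $\FF$-subspace of $\FF[\mathbf{y}]$ containing $\MM^m$, a priori not an ideal. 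Saying $\mathcal{E}$ is a $T$-submodule is exactly saying $\mathcal{I}$ is an ideal. So the goal becomes: show $\mathcal{I}$ is an ideal in $\FF[\mathbf{y}]$. Second, by Proposition~\ref{lem_descrip_diff_opp}(iii), for any $\FF$-subspace $\mathcal{I} \supseteq \MM^m$ we have ${\rm Sol}(\mathcal{E}) = \gamma^{-1}(\mathcal{I})$, where $\gamma : R \hookrightarrow \FF[\mathbf{y}]$, $x_i \mapsto y_i + u_i$ — here I need to check that the proof of part (iii) only used that $\mathcal{E}$ is an $\FF$-subspace and not the full module structure, which inspection of that proof confirms. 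So $Q = \gamma^{-1}(\mathcal{I})$. Third, I would use the hypothesis that $Q$ is an \emph{ideal} of $R$ and push it forward through $\gamma$: since $\gamma_S : S \hookrightarrow \FF[\mathbf{y}]$ induces the isomorphism $S/\pp^m \xrightarrow{\cong} \FF[\mathbf{y}]/\MM^m$ (Proposition~\ref{prop_corespondence_primary_ideals}), and $\mathcal{I} \supseteq \MM^m$, the subspace $\mathcal{I}$ corresponds to an $\FF$-subspace $\mathcal{J} \supseteq \pp^m$ of $S$ with $\mathcal{I} = \gamma_S(\mathcal{J})$; because $\gamma_S$ modulo $\MM^m$ is a ring isomorphism, $\mathcal{I}$ is an ideal of $\FF[\mathbf{y}]$ if and only if $\mathcal{J}$ is an ideal of $S$. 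Finally, $\gamma_S^{-1}(\mathcal{I}) = \mathcal{J}$ (this uses $\gamma_S$ is injective with image a subring through which the quotient map factors), and via the faithfully flat inclusion $R \hookrightarrow S$ — together with $\mathcal{J} \supseteq \pp^m$, so $\mathcal{J}$ is determined by $\mathcal{J}\cap R = Q$ and conversely $\mathcal{J} = Q S$ — the ideal property of $Q$ in $R$ transports to the ideal property of $\mathcal{J} = QS$ in $S$ (Remark~\ref{rem_local_primary_ideals}). Hence $\mathcal{I}$ is an ideal, $\mathcal{E}$ is a $T$-submodule, and therefore an $R$-bimodule.

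\textbf{Main obstacle.} The subtle point is the bookkeeping that \emph{no module structure is assumed} at the start: one must be careful that each correspondence invoked (Proposition~\ref{lem_descrip_diff_opp}(i)--(iii), Proposition~\ref{prop_corespondence_primary_ideals}, Remark~\ref{rem_local_primary_ideals}) is valid at the level of $\FF$-subspaces / $\KK$-subspaces and not only for ideals, and then to pin down exactly which of those correspondences is ``ring-theoretic'' enough to convert the ideal hypothesis on $Q$ back into the $T$-module conclusion on $\mathcal{E}$. The crux is the observation that $\mathcal{J} = \gamma_S^{-1}(\mathcal{I})$ recovers $\mathcal{I}$ as $\gamma_S(\mathcal{J})$ precisely because $\mathcal{I} \supseteq \MM^m$ and $\gamma_S$ induces a ring isomorphism modulo the $m$-th powers of the maximal ideals; everything else is a formal consequence. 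A secondary care point is that $Q = {\rm Sol}(\mathcal{E})$ being $P$-primary (not just an ideal) guarantees $\mathcal{I} \supseteq \MM^m$ for the $m$ in question — this is what makes the whole dualization take place inside the finite-length rings $S/\pp^m \cong \FF[\mathbf{y}]/\MM^m$ — and it also matches with Remark~\ref{rem:contains_power}, which ensures a power of $P$ lies in $Q$.
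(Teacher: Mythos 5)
Your argument is correct and matches the paper's proof in substance: both reduce to Macaulay duality over $\FF[\mathbf{y}]/\MM^m$ via Proposition~\ref{lem_descrip_diff_opp}, and both transport the ideal property of $Q$ through the isomorphism of Proposition~\ref{prop_corespondence_primary_ideals} to conclude that the annihilator subspace of $\mathcal{E}$ is an ideal, hence that $\mathcal{E}$ is a module (the paper runs the argument from the ideal $I=\MM^m+\gamma(Q)\FF[\mathbf{y}]$ to its perp, you run it from the perp back to the ideal, which is the same computation). Note only that the lemma sits in the zero-dimensional subsection where $P$ is maximal and $S=R$, so your detour through $S$ --- including the identification $\mathcal{J}=QS$, which in general needs the extra observation that $\mathcal{J}$ is stable under the $\KK(x_{c+1},\ldots,x_n)$-action --- collapses to the trivial statement $\mathcal{J}=Q$.
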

\begin{proof} Fix $m \in \NN$ such that $Q \supseteq P^m$ 
	and $\mathcal{E} \subseteq \Diff_{R/\KK}^{m-1}(R,\FF)$.
	The map $\gamma$ in (\ref{eq_map_gamma}) defines~the ideal 
	$I=\MM^m+\gamma(
	Q)\FF[y_1,\dots,y_n]$.
	Let $\mathcal{E}^\prime \subseteq \Hom_\FF\left(\frac{\FF[\mathbf{y}]}{\MM^m},\FF\right)$ 
	be the $\FF$-vector subspace~coming from $\mathcal{E}$
	under the isomorphism of Proposition~\ref{lem_descrip_diff_opp}$(i)$.
	The hypothesis $\,Q={\rm Sol}(\mathcal{E})\,$ implies 
	\begin{equation}
	\label{eq_functionals_sols}
	I/\MM^m\,\, = \,\,\bigl\{ \,w \in \FF[\mathbf{y}]/ \MM^m
	\,:\, \delta(w)=0 \,\text{ for all } \,\delta \in \mathcal{E}^\prime\, \bigr\}.
	\end{equation}
	Dualizing the inclusion $\mathcal{E}^\prime \hookrightarrow
	\Hom_\FF\left( \FF[\mathbf{y}]/\MM^m,\FF\right)$ we get the short exact sequence 
	\begin{equation} \label{eq_dualize_short_E}
	0 \,\,\rightarrow \,\,Z \,\,\rightarrow\,\, \FF[\mathbf{y}] / \MM^m
	\,\, \rightarrow \,\,\Hom_\FF(\mathcal{E}^\prime, \FF)\,\, \rightarrow \,\, 0,
	\end{equation}
	where $Z=\Big\lbrace w \in \frac{\FF[\mathbf{y}]}{\MM^m} 
	\,:\, \delta(w)=0 \text{ for all } \delta \in \mathcal{E}^\prime\Big\rbrace$.
	Therefore,  (\ref{eq_functionals_sols}) and (\ref{eq_dualize_short_E}) yield the isomorphism 
	$\,\Hom_\FF(\mathcal{E}^\prime,\FF) \cong \FF[\mathbf{y}]/I$, and 
	we conclude that $\mathcal{E} \cong \mathcal{E}^\prime$ is an $R$-bimodule.
\end{proof}

To complete the proof of Theorem~\ref{thm:main}, it will suffice to prove the following.

\begin{theorem}
	\label{thm_noeth_ops_zero_dim}
	Let $P$ be a maximal ideal in $R = \KK[x_1,\ldots,x_n]$, and 
	let $Q \subset R$ be a $P$-primary ideal 
	of multiplicity $m$ over $P$. Then
	$\,Q = {\rm Sol}(\mathcal{E})$, where
	$\mathcal{E}$ is obtained by the following steps:
	\begin{enumerate}[(i)]
		\item As in Theorem~\ref{thm:param_primary}, set $\,I=\langle y_1,\dots,y_n\rangle^m+\gamma(
		Q)\FF[y_1,\dots,y_n]$.
		\item As in Theorem~\ref{thm:Macaulay_dual}, set $\,V = I^\perp \,\subset\, \FF[z_1,\ldots,z_n]$.
		\item Using the map $\omega$ in (\ref{eq_map_omega}), set  
		$\,\,\mathcal{E}=\omega(V) \subset\FF \otimes_R R\langle\partial_{x_1},\ldots,\partial_{x_n}\rangle\cong\Diff_{R/\KK}(R,\FF)$.		
	\end{enumerate}
\end{theorem}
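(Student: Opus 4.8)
The plan is to funnel the claim through the bijections already established, so that no new structural input is needed beyond Proposition~\ref{lem_descrip_diff_opp} and Theorem~\ref{thm:param_primary}. By Theorem~\ref{thm:param_primary} we have $Q=\gamma^{-1}(I)$, where $\gamma$ is the inclusion of (\ref{eq_map_gamma}); moreover $I$ is $\MM$-primary with $\MM^m\subseteq I$, so $V=I^\perp$ consists of polynomials of degree $<m$ and $\mathcal{E}=\omega(V)$ lands in $\Diff_{R/\KK}^{m-1}(R,\FF)$, matching the hypotheses of Proposition~\ref{lem_descrip_diff_opp}. It therefore suffices to prove the single identity
$$
\omega(V)\;=\;\Hom_\FF\!\bigl(\FF[\mathbf{y}]/I,\,\FF\bigr)
$$
of subspaces of $\Diff_{R/\KK}^{m-1}(R,\FF)\cong\Hom_\FF(\FF[\mathbf{y}]/\MM^m,\FF)$, i.e.\ that $\mathcal{E}$ is the image of $I$ under the map (\ref{eq:ImapHom}); Proposition~\ref{lem_descrip_diff_opp}(iii) then yields ${\rm Sol}(\mathcal{E})=\gamma^{-1}(I)=Q$ at once.

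To prove that identity I would compute all the isomorphisms in sight on monomials. By the proof of Lemma~\ref{lem_diff_ops_R/J} (see (\ref{eq_diff_opp_z_alpha})), the isomorphism $\Diff_{R/\KK}^{m-1}(R,\FF)\xrightarrow{\cong}\Hom_\FF(\FF[\mathbf{y}]/\MM^m,\FF)$ of Proposition~\ref{lem_descrip_diff_opp}(i) carries $\overline{D_\mathbf{x}^\alpha}=\pi\circ D_\mathbf{x}^\alpha$ to the dual basis functional $(\mathbf{y}^\alpha)^{*}$, and since $\partial_\mathbf{x}^\alpha=\alpha!\,D_\mathbf{x}^\alpha$ the map $\omega$ of (\ref{eq_map_omega}) sends $\mathbf{z}^\alpha$ to $\alpha!\,(\mathbf{y}^\alpha)^{*}$. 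On the other side, tracing the identifications (\ref{eq_isom_E_inv_sys}) and (\ref{eq_isom_inv_sys_char_zero}) together with Matlis duality --- exactly as in the paragraph preceding Theorem~\ref{thm:Macaulay_dual} --- the inclusion $V=I^\perp\hookrightarrow\FF[\mathbf{z}]$ becomes $(\FF[[\mathbf{y}]]/I)^{\vee}\hookrightarrow(\FF[[\mathbf{y}]]/\MM^m)^{\vee}\cong\Hom_\FF(\FF[\mathbf{y}]/\MM^m,\FF)$, under which $\mathbf{z}^\alpha$ (with $|\alpha|<m$) goes to the functional $\bar g\mapsto(g\bullet\mathbf{z}^\alpha)\big|_{\mathbf{z}=0}=\alpha!\,(\mathbf{y}^\alpha)^{*}$ --- the very same vector. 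As the two maps $\FF[\mathbf{z}]_{<m}\to\Hom_\FF(\FF[\mathbf{y}]/\MM^m,\FF)$ agree on a basis, they coincide, so $\omega(V)$ equals the image of $I$ under (\ref{eq:ImapHom}), as needed.

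An equivalent, and perhaps more transparent, route avoids citing Proposition~\ref{lem_descrip_diff_opp}(iii). For $f\in R$ and $F=\sum_\alpha c_\alpha\mathbf{z}^\alpha\in\FF[\mathbf{z}]$, expanding $\gamma(f)=f(\mathbf{u}+\mathbf{y})$ by the Taylor formula of Section~\ref{sec5} and letting $\mathbf{y}$ act on $\FF[\mathbf{z}]$ as $\partial_\mathbf{z}$ yields the polynomial $f(\mathbf{u}+\partial_\mathbf{z})\bullet F$, whose value at $\mathbf{z}=0$ is exactly $\omega(F)(f)$ once the factorials cancel. Since $f(\mathbf{u}+\partial_\mathbf{z})$ has coefficients in $\FF$, it commutes with every $\partial_\mathbf{z}^{\mu}$, so the $\mathbf{z}^{\mu}$-coefficient of $f(\mathbf{u}+\partial_\mathbf{z})\bullet F$ equals, up to the scalar $\mu!$, the value at the origin of $f(\mathbf{u}+\partial_\mathbf{z})\bullet(\partial_\mathbf{z}^{\mu}\bullet F)$; hence $f(\mathbf{u}+\partial_\mathbf{z})\bullet F$ vanishes identically for all $F\in V$ if and only if $\omega(G)(f)=0$ for all $G\in V$, using \emph{crucially} that the inverse system $V$ is closed under differentiation. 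Finally, Theorem~\ref{thm:Macaulay_dual} gives $I={\rm Ann}_{\FF[[\partial_\mathbf{z}]]}(V)$, so $\gamma(f)\in I$ is equivalent to $f(\mathbf{u}+\partial_\mathbf{z})\bullet F=0$ for all $F\in V$, which by the above is equivalent to $f\in{\rm Sol}(\omega(V))={\rm Sol}(\mathcal{E})$; therefore ${\rm Sol}(\mathcal{E})=\gamma^{-1}(I)=Q$.

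I expect the main obstacle to be bookkeeping rather than ideas: one must keep straight the three normalizations at play --- the operators $D_\mathbf{x}^\alpha=\tfrac{1}{\alpha!}\partial_\mathbf{x}^\alpha$ versus $\partial_\mathbf{x}^\alpha$, the identification of (\ref{eq_isom_inv_sys_char_zero}), and the contraction action of $\FF[[\mathbf{y}]]$ on inverse polynomials --- and verify that the $\alpha!$'s cancel so that the two descriptions of $\mathcal{E}$ really coincide. The one genuinely essential point is the commutation step in the second argument: passing from ``$f(\mathbf{u}+\partial_\mathbf{z})\bullet F$ vanishes at the origin for all $F\in V$'' to ``$f(\mathbf{u}+\partial_\mathbf{z})\bullet F$ vanishes identically for all $F\in V$'' is precisely where the differentiation-closedness of $V$ --- equivalently, via $\omega$, the $R$-bimodule property of $\mathcal{E}$ in Theorem~\ref{thm:main}(d) --- cannot be dispensed with.
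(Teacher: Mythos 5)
Your proposal is correct, and your first argument is essentially the proof in the paper: we also reduce to showing that $\mathcal{E}=\omega(V)$ is the image of $I$ under the correspondence of Proposition~\ref{lem_descrip_diff_opp}(ii) and then invoke Proposition~\ref{lem_descrip_diff_opp}(iii) together with Theorem~\ref{thm:param_primary}. The only cosmetic difference is that we establish the identification $V\cong\Hom_\FF(\FF[\mathbf{y}]/I,\FF)$ via the perfect pairing (\ref{eq_perf_pairing}), Matlis duality and the Hom-tensor adjunction in (\ref{eq_isom_V_prime_dual_quot_I}), whereas you verify directly that both maps send $\mathbf{z}^\alpha$ to $\alpha!\,(\mathbf{y}^\alpha)^{*}$ and hence agree on a basis; the bookkeeping of the $\alpha!$'s (i.e.\ $(\mathbf{y}^\alpha)^{*}\leftrightarrow 1/\mathbf{y}^\alpha\leftrightarrow\overline{D_\mathbf{x}^\alpha}=\tfrac{1}{\alpha!}\overline{\partial_\mathbf{x}^\alpha}$) is exactly the point we also track. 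Your second argument is a genuinely different and more elementary route: it bypasses Proposition~\ref{lem_descrip_diff_opp}(iii) entirely by computing $\omega(F)(f)=\bigl(f(\mathbf{u}+\partial_\mathbf{z})\bullet F\bigr)\big|_{\mathbf{z}=0}$ and upgrading vanishing at the origin to identical vanishing via the commutation of $f(\mathbf{u}+\partial_\mathbf{z})$ with $\partial_\mathbf{z}^{\mu}$ and the closedness of $V$ under differentiation; combined with $I=\mathrm{Ann}_{\FF[[\partial_\mathbf{z}]]}(V)$ from Theorem~\ref{thm:Macaulay_dual} this gives $\mathrm{Sol}(\mathcal{E})=\gamma^{-1}(I)$ by hand. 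That version is self-contained and makes transparent exactly where the differentiation-closedness (equivalently, the bimodule condition of Theorem~\ref{thm:main}(d)) enters, at the cost of not reusing the duality formalism that the paper needs again in the proof of Theorem~\ref{thm:ourZN}.
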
 

\begin{proof}We claim that the correspondence in Proposition~\ref{lem_descrip_diff_opp}$(ii)$ yields
	$$ 		
	\mathcal{E} \,\,\cong\, \,\Hom_{\FF}\bigl( \FF[\mathbf{y}] / I, \FF \bigr)
	\,\, \,\hookrightarrow\, \,\,\Diff_{R/\KK}^{m-1}(R,\FF).
	$$
	The isomorphism (\ref{eq_isom_inv_sys_char_zero}) implies that $V\cong V^\prime = \left(0:_{\FF\left[\mathbf{y^{-1}}\right]} I\right)$.
	Since $I \supseteq \MM^m$, it follows that $V^\prime \subseteq \left(0:_{\FF\left[\mathbf{y^{-1}}\right]} \MM^m\right)$.
	For each $0 \le j < m$, there is a perfect pairing 
	\begin{equation}
	\label{eq_perf_pairing}
\!\!\!\!	{\left[\frac{\FF[\mathbf{y}]}{\MM^m}\right]}_j \!\otimes_\FF {\left[\left(0:_{\FF\left[\mathbf{y^{-1}}\right]} \MM^m\right)\right]}_{-j} \rightarrow \;\FF, \, \mathbf{y}^\alpha \otimes_\FF \frac{1}{\mathbf{y}^\beta} \,\mapsto\, \mathbf{y}^\alpha\cdot\frac{1}{\mathbf{y}^\beta} = \begin{cases}
	1 \;\;\text{ if } \alpha = \beta\\
	0 \;\;\text{ otherwise},
	\end{cases}
	\end{equation}
	where $\vert\alpha\rvert=\vert\beta\rvert=j$, induced by the usual multiplication.
	We get the isomorphisms 
	\begin{equation}
	\label{eq_isom_inv_sys_diff_ops}
	\left(0:_{\FF\left[\mathbf{y^{-1}}\right]} \MM^m\right) \;\cong\; \Hom_\FF\left(\frac{\FF[\mathbf{y}]}{\MM^m},\FF\right) \;\cong\; \Diff_{R/\KK}^{m-1}(R,\FF) .
	\end{equation}
	The second isomorphism is from Proposition~\ref{lem_descrip_diff_opp}$(i)$.
	The Hom-tensor adjunction~gives 
	\begin{multline}\label{eq_isom_V_prime_dual_quot_I}
	V^\prime = \left(0 :_{\left(0:_{\FF\left[\mathbf{y^{-1}}\right]} \MM^m\right)} I\right)
	\cong \Hom_{\FF[[\mathbf{y}]]}\left(\frac{\FF[\mathbf{y}]}{I},
	\Hom_\FF \left( \frac{\FF[\mathbf{y}]}{\MM^m},\FF\right) \right) \\
	\cong \Hom_{\FF}\left(\frac{\FF[\mathbf{y}]}{I}, \FF\right).
	\end{multline}
	The isomorphism $V^\prime \cong \Hom_\FF\left(\frac{\FF[\mathbf{y}]}{I},\FF\right)$ also 
	follows from \cite[Proposition 21.4]{EISEN_COMM}.
	
	By the isomorphism (\ref{eq_isom_inv_sys_char_zero}) and the map $\omega$ in (\ref{eq_map_omega}), $\mathcal{E}$ 
	arises   from $V^\prime$ via the map 
	$$
	V^\prime \xrightarrow{\cong} \mathcal{E}, \quad
	\frac{1}{\mathbf{y}^\alpha} \mapsto \frac{1}{\alpha!}\partial_{\mathbf{x}}^\alpha.
	$$
	On the other hand, by (\ref{eq_diff_opp_z_alpha}), (\ref{eq_perf_pairing}) and (\ref{eq_isom_inv_sys_diff_ops}), the dual monomial $${(\mathbf{y}^\alpha)}^* \in \Hom_\FF\left(\frac{\FF[y_1,\ldots,y_n]}{\MM^m},\FF\right)$$ is identified with the inverted monomial $\frac{1}{\mathbf{y}^\alpha} \in \FF[\mathbf{y^{-1}}]$ and with the differential operator $\overline{D_\mathbf{x}^\alpha} = \frac{1}{\alpha!}\overline{\partial_{\mathbf{x}}^\alpha}\in \Diff_{R/\KK}^{m-1}(R,\FF)$.
	Therefore, the isomorphisms in (\ref{eq_isom_V_prime_dual_quot_I}) imply that $\mathcal{E}$ is indeed determined by $I$ via the correspondence in Proposition~\ref{lem_descrip_diff_opp}$(ii)$. 
	
	After this identification, Proposition~\ref{lem_descrip_diff_opp}$(iii)$ and Theorem~\ref{thm:param_primary} imply that 
	$$
	{\rm Sol}(\mathcal{E})\, =\, \gamma^{-1}(I) \,=\,Q.
	$$
	This completes the proof of Theorem \ref{thm_noeth_ops_zero_dim},
	and we get Theorem \ref{thm:main} for $P$ maximal.
\end{proof}

\subsection{The general case}

We now complete the proof of Theorem~\ref{thm:main}.
As before, $R=\KK[x_1,\ldots,x_n]$,
${\rm char}(\KK) = 0$, and $P $ is prime of height $c$ in $R$. We use the notation 
from Section~\ref{sec4}, where $S=\KK(x_{c+1},\ldots,x_n)[x_1,\ldots,x_c]$ and $\pp = PS$.
By choosing a maximal independent set and permuting variables, 
we can assume that the field extension $\KK(x_{c+1},\ldots,x_n) \hookrightarrow \FF = \text{Quot}(R/P)$ is algebraic.
The ideal $\pp \subset S $ is maximal and $\FF = S/\pp$.
What follows will allow us to derive Theorem~\ref{thm:main} from Theorems~\ref{thm:param_primary},~\ref{thm:Macaulay_dual} and~\ref{thm_noeth_ops_zero_dim}.

\begin{remark}	\label{rem_clear_fractions_diff_ops}
	By Lemma~\ref{lem_diff_ops_R/J}, any $A^{\prime\prime}\in \Diff_{S/\KK(x_{c+1},\ldots,x_n)}^{m-1}(S,S/\pp)$ can be written as 
	$$	A^{\prime\prime} \,\,\,= \sum_{\substack{\beta \in \NN^c\\ \lvert \beta \rvert \le m-1}}
	\overline{h_\beta}\,\, \overline{\partial_{x_1}^{\beta_1}\cdots\partial_{x_c}^{\beta_c}}  
	\quad \text{ for some }\,\, h_\beta \in S.
	$$
	We choose $h \in \KK[x_{c+1},\ldots,x_n]$ such that $h\cdot h_\beta \in R$ for all $\beta$. Hence, we can consider
	$$
	A^\prime \,\,\, = \sum_{\substack{\beta \in \NN^c\\ \lvert \beta \rvert \le m-1}} \overline{h\cdot h_\beta}\, \overline{\partial_{x_1}^{\beta_1}\cdots\partial_{x_c}^{\beta_c}}  \;\in \; \Diff_{R/\KK}^{m-1}(R,R/P).
	$$
	This differential operator satisfies $\,{\rm Sol}(A^\prime)={\rm Sol}_S(A^{\prime\prime}) \cap R$.
\end{remark}

\begin{remark}
	\label{rem_lift_diff_ops_sol}
	Let $A^\prime = \sum_{\lvert \alpha \rvert \le m-1} \overline{r_\alpha} \overline{\partial_{\mathbf{x}}^\alpha} \,\in\, \Diff_{R/\KK}^{m-1}(R,R/P)$ be a differential operator.
	By Lemma~\ref{lem_diff_ops_R/J}, we can lift this to
	$A = \sum_{\lvert \alpha \rvert \le m-1} 
	r_\alpha \partial_\mathbf{x}^\alpha \,\in\, \Diff_{R/\KK}^{m-1}(R,R)$.
	Then, 
	$$ {\rm Sol}(A^\prime)=\lbrace f \in R : A \bullet f \in P \rbrace . $$
\end{remark}

We next describe the Weyl-Noether module in terms of differential operators.

\begin{remark}
	\label{rem_isom_restrict_Weyl_mod}
	We have the following isomorphisms
	\begin{align*}
	\FF \otimes_R D_{n,c} \,\,=\,\,
	\FF \otimes_R R\langle\partial_{x_1},\ldots,\partial_{x_c} \rangle \,&
	\,\cong\,\, \FF \otimes_S \left( S \otimes_R R\langle \partial_{x_1},\ldots,\partial_{x_c}\rangle\right)\\
	& \,\cong \,\, \FF \otimes_S  S\langle \partial_{x_1},\ldots,\partial_{x_c}\rangle\\
	& \,\cong\,\, \Diff_{S/\KK(x_{c+1},\ldots,x_n)}\left(S,\FF\right).
	\end{align*}
	The last isomorphism follows from (\ref{eq_isom_relWeyl_diff}) by applying it to the polynomial ring 
	$S=\KK(x_{c+1},\ldots,x_n)[x_1,\ldots,x_c]$ and the maximal ideal $\pp  = PS$ in $S$.
\end{remark}

\begin{proof}[Theorem~\ref{thm:main}]
	The correspondences between parts (a), (b) and (c) were established in Theorems~\ref{thm:param_primary} and~\ref{thm:Macaulay_dual}.
	Using Remark~\ref{rem_isom_restrict_Weyl_mod}, 
	we identify the Weyl-Noether module
	$\,\FF \otimes_R D_{n,c} \,$ with $\, \Diff_{S/\KK(x_{c+1},\ldots,x_n)}\left(S,\FF\right)$.
	As in (\ref{eq_map_omega}), we consider the map 
	\begin{align}
	\label{eq_map_omega_S}
	\begin{split}
	\omega_S \,:\, \FF[z_1,\ldots,z_c] \;&\rightarrow\; \FF \,\otimes_R \,  D_{n,c}\;\cong\;\Diff_{S/\KK(x_{c+1},\ldots,x_n)}(S,\FF)\\
	z_1^{\alpha_1}\cdots z_c^{\alpha_c} \;&\mapsto\; \partial_{x_1}^{\alpha_1}\cdots\partial_{x_c}^{\alpha_c},
	\end{split}
	\end{align}
	but now applied to the polynomial ring $S=\KK(x_{c+1},\ldots,x_n)[x_1,\ldots,x_c]$ and 
	its maximal ideal $\pp \subset S$.
	This map $\omega_S$ yields the correspondence between parts (c) and (d), that is, between $m$-dimensional $\FF$-vector subspaces of $\FF[z_1,\ldots,z_c]$ that are closed under differentiation and $m$-dimensional $\FF$-vector subspaces of $\FF \otimes_R D_{n,c}$ that are $R$-bimodules
	under the action (\ref{eq_deriv_z_bracket_partial}).
	
	It remains to show that a basis of an $\FF$-vector subspace in part (d) can be lifted to a set of Noetherian operators for the $P$-primary ideal in part (a). 
	For that, let $Q$ be a $P$-primary ideal with multiplicity $m$ over $P$, and set $I=\gamma(Q)$, $V = I^\perp$ and $\mathcal{E} = \omega_S(V)$, by using Theorem~\ref{thm:param_primary}, Theorem~\ref{thm:Macaulay_dual} and (\ref{eq_map_omega_S}), respectively.
	Then, Theorem~\ref{thm_noeth_ops_zero_dim} implies that, for any basis $A_1^{\prime\prime},\ldots,A_m^{\prime\prime}$ of the $\FF$-vector subspace $\mathcal{E} \subset \Diff_{S/\KK(x_{c+1},\ldots,x_n)}(S,\FF)$, we get the equality
	$
	QS = {\rm Sol}_S(A_1^{\prime\prime},\ldots,A_m^{\prime\prime}).
	$
	From Remark~\ref{rem_clear_fractions_diff_ops}, we can choose differential operators 
	$$
	A_i^\prime \,\,=\,\, \sum_{\alpha \in \NN^c} \overline{r_{i,\alpha}} \,\,
	\overline{\partial_{x_1}^{\alpha_1}\cdots\partial_{x_c}^{\alpha_c}} 
	\,\in\, \Diff_{R/\KK}(R,R/P), \quad \text{ where } 1\le i \le m \text{ and } r_{i,\alpha} \in R,
	$$
	such that $Q={\rm Sol}(A_1^\prime,\ldots,A_m^\prime)$.
	By Remark~\ref{rem_lift_diff_ops_sol}, the lifted differential operators
	$$
	A_i \,\,=\,\, \sum_{\alpha \in \NN^c} r_{i,\alpha} \partial_{x_1}^{\alpha_1}\cdots\partial_{x_c}^{\alpha_c} \,\in\, D_{n,c}
	% , \quad \text{ where } 1\le i \le m \text{ and } r_{i,\alpha} \in R,
	$$
	are Noetherian operators for $Q$, hence
	(\ref{eq:fromAtoQ}) holds.
	This completes the proof.
\end{proof}

\section{Symbolic Powers and other Joins}
\label{sec7}

The symbolic power of an ideal is a fundamental construction in commutative algebra.
We here work in the polynomial ring $R=\KK[x_1,\ldots,x_n]$
over a field $\KK$ of characteristic zero, with homogeneous maximal ideal
$\mathfrak{m} = \langle x_1,\ldots,x_n \rangle$. 
The $r$-th {\em symbolic power} of an ideal $J$ in $R$~is
$$ J^{(r)} \,\,\,\,:= \,\,\bigcap_{\pp \in {\rm Ass}(J)} \!\! J^r R_\pp \cap R. $$
Hence, if $P$ is a prime ideal in $R$ then $P^{(r)}$ is the
$P$-primary component of the usual power~$P^r$.
If $\,{\rm codim}(P) = c\,$ then the primary ideal $P^{(r)}$ has multiplicity 
$m =  \binom{c+r-1}{c}$ over $P$, and in Theorem \ref{thm:param_primary}
it is represented by
the zero-dimensional ideal $\,I = \langle y_1,\ldots,y_c \rangle^r\, \subset \, \FF[y_1,\ldots,y_c]$.

Our point of departure  is Sullivant's formula in
\cite[Proposition~2.8]{SULLIVANT_SYMB}:
\begin{equation}
\label{eq:sulli1}
J^{(r)} \,\, = \,\, J \star \mathfrak{m}^r . 
\end{equation}
Here, $J$ is any radical ideal in $R$, and $\star$ denotes the join of ideals.
This is a reformulation of the {\em Zariski-Nagata Theorem}  which
expresses the symbolic power via differential equations:
\begin{equation}
\label{eq:sulli2}
J^{(r)} \,\, = \,\, \,
\biggl\{ \,f \,\in\, R \,\,\,\bigg\vert \,\,\,
\frac{\partial^{i_1+i_2+\cdots+i_n} f}{
	\partial x_1^{i_1}
	\partial x_2^{i_2} \cdots
	\partial x_n^{i_n} 
} \,\in\, J \quad \text{whenever}\, \,\,i_1+i_2 + \cdots + i_n < r \, \biggr\}. 
\end{equation}
The goal of this section is to generalize the equivalence between
(\ref{eq:sulli1}) and (\ref{eq:sulli2}).
We construct $P$-primary ideals by means of joins
and relate this to our earlier results.

\begin{definition}
	If $J$ and $K$ are ideals in $R$, then their \textit{join} is the new ideal
	$$
	J \star K \,\,\,:=\,\,\, \Big( J(\mathbf{v}) \,+\, K(\mathbf{w}) \,+\,
	\langle x_i - v_i - w_i : 1 \le i \le n \rangle \Big) \,\,\cap \,\,R,
	$$
	where $J(\mathbf{v})$ is the ideal $J$ with new variables $v_i$ substituted for $x_i$ and $K(\mathbf{w})$ is the ideal $K$ with  $w_i$ substituted for $x_i$.	
	The parenthesized ideal lives in a polynomial ring in $3n$ variables.
\end{definition}

\begin{remark}
	\label{rem_kernel_map_join}
	Following Simis and Ulrich \cite{SIMIS_ULRICH_JOIN},
	the join $J \star K$ is the kernel of the map 
	\begin{align*}
	R \;&\,\rightarrow \,\;  \frac{\KK[v_1,\ldots,v_n,w_1,\ldots,w_n]}{J(\mathbf{v})+K(\mathbf{w})} \,\;\;\xleftrightarrow{\cong}\; R/J \otimes_\KK R/K\\
	x_i \;&\, \mapsto \,\,\;\; \overline{v_i}+\overline{w_i}  \qquad\qquad\qquad\qquad\leftrightarrow \;\;\overline{x_i} \otimes_\KK 1 + 1 \otimes_\KK \overline{x_i}.
	\end{align*}
	Hence, the quotient $\,R/\left(J\star K\right)\,$ can be identified with a subring of 
	$\,R/J \otimes_\KK R/K$.
\end{remark}

The following result summarizes a few basic properties of the join construction. 

\begin{proposition}
	\label{prop_properties_join}
	Let $J$ and $K$ be ideals in $R$.
	Then, the following statements hold:
	\begin{enumerate}[(i)]
		\item If $J = J_1 \cap J_2$, where $J_1,J_2 \subset R$ are ideals, then $J \star K = (J_1 \star K) \cap (J_2 \star K)$. 
		\item $\sqrt{J \star K} = \sqrt{J} \star \sqrt{K}$; in particular, $J \star K$ is radical when $J$ and $K$ are.
		\item Suppose that $\KK$ is algebraically closed. 
		If $P_1$ and $P_2$ are prime ideals, then $P_1 \star P_2$ is a prime ideal.
		If $J$ and $K$ are primary ideals, then $J \star K$ is a primary ideal. 
		\item If $M$ is an $\mathfrak{m}$-primary ideal, then $P \star M$ is a $P$-primary ideal. 
	\end{enumerate}
\end{proposition}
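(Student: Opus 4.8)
The plan is to exploit Remark~\ref{rem_kernel_map_join}, which realizes $R/(P\star M)$ as a subring of $C := R/P \otimes_\KK R/M$, and to prove that this ambient ring $C$ is \emph{primary} in the sense that every zero-divisor is nilpotent. A subring of a primary ring is primary: if $xy = 0$ in the subring then $x = 0$ or $y^k = 0$ for some $k$ holds in $C$, and both conclusions descend since the subring embeds in $C$. So the work is all in understanding $C$, together with a separate computation of the radical.

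To analyse $C$, I would use that $M$ is $\mathfrak{m}$-primary, so that $R/M$ is a finite-dimensional local $\KK$-algebra with residue field $R/\mathfrak{m} = \KK$ and nilpotent maximal ideal $\mathfrak{m}/M$; in particular $R/M = \KK \oplus (\mathfrak{m}/M)$ as a $\KK$-vector space. Tensoring over $\KK$ with the domain $A := R/P$ gives $C = A \oplus \bigl(A \otimes_\KK \mathfrak{m}/M\bigr)$, a \emph{free} $A$-module, hence $A$-torsion-free; so the localization map $C \to \FF \otimes_A C$ is injective, where $\FF := \operatorname{Quot}(R/P)$, and $\FF \otimes_A C \cong \FF \otimes_\KK R/M$. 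This last ring decomposes as $\FF \oplus (\FF \otimes_\KK \mathfrak{m}/M)$, whose second summand is a nilpotent ideal with quotient the field $\FF$; hence $\FF \otimes_\KK R/M$ is Artinian local, so its zero ideal is primary. Chasing the inclusions $R/(P\star M) \hookrightarrow C \hookrightarrow \FF \otimes_\KK R/M$ and applying the subring remark, I conclude that $P\star M$ is a primary ideal of $R$.

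Finally I would compute the radical: part~(ii) of Proposition~\ref{prop_properties_join} gives $\sqrt{P\star M} = \sqrt{P}\star\sqrt{M} = P \star \mathfrak{m}$, and Remark~\ref{rem_kernel_map_join} applied with $M$ replaced by $\mathfrak{m}$ identifies $P\star\mathfrak{m}$ with the kernel of the map $R \to R/P \otimes_\KK R/\mathfrak{m} \cong R/P$, $x_i \mapsto \overline{x_i}$, which is the canonical surjection; hence $P\star\mathfrak{m} = P$ and $P\star M$ is $P$-primary. The step I expect to be the main obstacle is verifying that $C$ has no embedded primes, i.e.\ that passing to the generic point of $\operatorname{Spec}(R/P)$ does not lose associated primes; the cleanest way around this is exactly the freeness of $R/P\otimes_\KK R/M$ over $R/P$, which forces the localization map to be injective. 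Everything else is routine bookkeeping.
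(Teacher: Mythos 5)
Your argument for part (iv) is mathematically sound, and it takes a genuinely different route from the paper. The paper computes $\operatorname{Ass}(R/P\otimes_\KK R/M)$ via \cite[Theorem 23.2]{MATSUMURA} (flatness over $\KK$), observes that this set equals $\operatorname{Ass}(R/P\otimes_\KK R/\mathfrak{m})=\{P\}$ because $\sqrt{M}=\mathfrak{m}$, and then restricts associated primes to the $R$-submodule $R/(P\star M)$. You instead give a direct zero-divisor argument: $C=R/P\otimes_\KK R/M$ is free over $A=R/P$, hence embeds into $\FF\otimes_\KK R/M$, which is Artinian local with nilpotent maximal ideal, and the property ``every zero-divisor is nilpotent'' descends to subrings. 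This is more elementary (no associated primes of tensor products needed) and the freeness observation cleanly handles exactly the point you flag, namely that localizing at the generic point of $\operatorname{Spec}(R/P)$ loses nothing. As a side benefit, your chain $R\to C\hookrightarrow \FF\otimes_\KK R/M$ also yields $P\subseteq\sqrt{P\star M}$ directly (every $f\in P$ lands in the nilpotent maximal ideal of $\FF\otimes_\KK R/M$), which, combined with the monotonicity $P\star M\subseteq P\star\mathfrak{m}=P$, would let you avoid invoking part (ii) altogether.

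The genuine gap is coverage: the statement has four parts, and you prove only (iv). Parts (i)--(iii) are never addressed, and your radical computation in (iv) explicitly leans on the unproved part (ii), so as written the argument is not self-contained even for the one part you treat. For the record, the paper handles the missing parts as follows: (i) is the distributivity of join over intersection from \cite[Lemma 2.6]{SULLIVANT_SYMB}; (ii) uses that $R/\sqrt{J}\otimes_\KK R/\sqrt{K}$ is reduced in characteristic zero and that the kernel of $R/J\otimes_\KK R/K\twoheadrightarrow R/\sqrt{J}\otimes_\KK R/\sqrt{K}$ is nilpotent; and (iii) uses that $R/P_1\otimes_\KK R/P_2$ is a domain when $\KK$ is algebraically closed, together with the same associated-primes identity $\operatorname{Ass}(R/J\otimes_\KK R/K)=\operatorname{Ass}(R/P_1\otimes_\KK R/P_2)$. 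Note that your localization trick does not transfer to (iii): there the second factor $R/P_2$ is not finite-dimensional over $\KK$, so the ambient tensor product is not local with nilpotent maximal ideal, and algebraic closedness of $\KK$ genuinely enters. You would need a separate argument for those parts.
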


\begin{proof}
	This is an adaptation of \cite[Proposition 1.2]{SIMIS_ULRICH_JOIN} for non-necessarily homogeneous ideals.
	
	$(i)$  The join distributes over intersections by \cite[Lemma 2.6]{SULLIVANT_SYMB}.
	
	$(ii)$ The ring $R/\sqrt{J} \otimes_\KK R/\sqrt{K}$ is reduced by
	\cite[Corollary 5.57]{GORTZ_WEDHORN}.
	As the kernel of the map 
	$R/J\otimes_\KK R/K \twoheadrightarrow R/\sqrt{J} \otimes_\KK R/\sqrt{K}$ 
	is nilpotent, the claim follows from Remark~\ref{rem_kernel_map_join}.
	
	$(iii)$ Since $\KK$ is algebraically closed, $R/P_1 \otimes_\KK R/P_2$ 
	is an integral domain  \cite[Lemma 4.23]{GORTZ_WEDHORN}.
	By Remark~\ref{rem_kernel_map_join}, $R/(P_1 \star P_2)$ is a subring of
	this domain. Thus, $P_1 \star P_2$ is a prime ideal.
	Suppose ${\rm Ass}(R/J)=\{P_1\}$ and ${\rm Ass}(R/K)=\{P_2\}$. 
	From \cite[Theorem 23.2]{MATSUMURA} we infer
	\begin{equation} \label{eq_equality_ass_primes}
	{\rm Ass}(R/J \otimes_\KK R/K)\,\, = \,\, {\rm Ass}(R/P_1 \otimes_\KK R/P_2).
	\end{equation}
	We already saw that $R/P_1 \otimes_\KK R/P_2$ is an integral domain.
	Therefore, $R/J \otimes_\KK R/K$ has only one associated prime, and 
	hence so does its subring $R/(J \star K)$.
	
	$(iv)$ The equality in (\ref{eq_equality_ass_primes}) is valid for any field. 
	This implies ${\rm Ass}(R/P \otimes_\KK R/M)
	= {\rm Ass}(R/P \otimes_\KK R/\mathfrak{m}) = \{ P \star \mathfrak{m} \} = \{P\}$.
	We hence conclude $\,{\rm Ass}(R / (P \star M))= \{P\}$.
\end{proof} 

\begin{example} In Proposition \ref{prop_properties_join} (iii) we need
	the hypothesis that $\KK$ is algebraically closed.
	If $\KK = \RR$ then 
	$P_1 = \langle x_1^2+1, x_2 \rangle$ and 
	$P_2 = \langle x_1 , x_2^2+1 \rangle $ are prime but their join is not primary:
	$$ P_1 \star P_2 \,\, = \,\, \langle x_1^2+1, x_2^2+1 \rangle \,\, =\,\,
	\langle x_1 - x_2 , x_2^2+1 \rangle \,\, \cap \,\, \langle x_1 + x_2 , x_2^2+1 \rangle .$$
\end{example}

We now focus on the $P$-primary ideals $Q = P \star M$ 
in Proposition \ref{prop_properties_join} (iv).
These will be characterized by differential equations derived from
the $\mathfrak{m}$-primary ideal~$M$.

\begin{definition}
	Fix an $\mathfrak{m}$-primary ideal $M$.
	We encode $M$ by a system $\mathfrak{A}(M)$ of linear PDE with constant coefficients.
	This is done by performing the following~steps:
	\begin{enumerate}[(i)]
		\item Interpret $M$ as PDE by replacing the variables $x_i$ with $\partial_{z_i}$
		for $i=1,\ldots,n$. 
		\item Compute $M^\perp=\left\lbrace F\in \KK[z_1,\ldots,z_n]: f\bullet F=0 \mbox{ for all }f\in M\right\rbrace$. 
		\item Let $\mathfrak{A}(M) \subset \KK[\partial_{x_1},\ldots,\partial_{x_n}]$ be the image of $M^\perp$ under the map $\mathbf{z}^\alpha \mapsto \partial_{\mathbf{x}}^\alpha$.
	\end{enumerate}
	We say the $\KK$-subspace $\mathfrak{A}(M)$ 
	comprises the \textit{differential operators associated to~$M$}.
\end{definition} 

\begin{remark}	\label{rem_joins_props}
	(i) The space $\mathfrak{A}(M)$ is closed under 
	brackets as in (\ref{eq_deriv_z_bracket_partial}) and Theorem~\ref{thm:Macaulay_dual}.
	
	\noindent	(ii) For any $r \ge 1$, we have
	$ \mathfrak{A}\left(\mathfrak{m}^r\right) = \bigoplus_{\lvert \alpha \rvert \le r-1} \KK \, \partial_{\mathbf{x}}^\alpha $. Thus,	 $\mathfrak{A}(\mathfrak{m}^r)$ comprises
	the differential operators  used in the Zariski-Nagata formula for symbolic powers;
	see (\ref{eq:sulli2}) and  \cite[\S 3.9]{EISEN_COMM}.
\end{remark}

The next result generalizes the classical Zariski-Nagata Theorem to 
ideals obtained with the join construction.
Of main interest is the case when  $J=P$ is~prime.

\begin{theorem} \label{thm:ourZN}
	Let $J$ be any ideal in $R = \KK[x_1,\ldots,x_n]$ 
	and let $M$ be $\mathfrak{m}$-primary.
	\begin{enumerate}[(i)]
		\item The join of $J$ and $M$ equals
		$J \star M \;=\;  \big\lbrace f \in R : A \bullet f \in J \;
		\text{ for all }\; A \in \mathfrak{A}(M) \big\rbrace$. 
		\item If $J$ is radical and $r \in \NN$ then $
		J^{(r)} = J \star \mathfrak{m}^r =
		\big\lbrace f \in R : \partial_{\mathbf{x}}^\alpha \bullet f \in J 
		\text{ for  } \lvert \alpha \rvert \le r-1 \big\rbrace $.
	\end{enumerate}
\end{theorem}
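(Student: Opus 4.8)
The plan is to prove (i) directly from the kernel description of the join, and then to deduce (ii) as a special case. For (i), I would start from Remark~\ref{rem_kernel_map_join}: the ideal $J\star M$ is the kernel of the $\KK$-algebra homomorphism $\varphi\colon R\to R/J\otimes_\KK R/M$ determined by $x_i\mapsto \overline{x_i}\otimes_\KK 1+1\otimes_\KK \overline{x_i}$, so $f\in J\star M$ precisely when $\varphi(f)=0$. Expanding $f$ with the Taylor formula used throughout Section~\ref{sec5}, namely $f(\mathbf{v}+\mathbf{w})=\sum_{\alpha}(D_{\mathbf{x}}^\alpha f)(\mathbf{v})\,\mathbf{w}^\alpha$ with $D_{\mathbf{x}}^\alpha=\tfrac{1}{\alpha!}\partial_{\mathbf{x}}^\alpha$, and reducing modulo $J$ in the first factor and modulo $M$ in the second, I obtain $\varphi(f)=\sum_{\alpha}\overline{D_{\mathbf{x}}^\alpha f}\otimes_\KK \overline{\mathbf{x}^\alpha}$ in $R/J\otimes_\KK R/M$.

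Because $M$ is $\mathfrak{m}$-primary, $R/M$ is a finite-dimensional $\KK$-vector space, so $R/J\otimes_\KK R/M$ is a free $R/J$-module and $\varphi(f)=0$ holds if and only if $(\mathrm{id}_{R/J}\otimes_\KK \ell)(\varphi(f))=0$ for every $\KK$-linear functional $\ell$ on $R/M$; equivalently, $\sum_\alpha \ell(\overline{\mathbf{x}^\alpha})\,\overline{D_{\mathbf{x}}^\alpha f}=0$ in $R/J$ for all $\ell$. The next step is to recognize $\Hom_\KK(R/M,\KK)$ as $M^\perp$ through Macaulay duality (the zero-dimensional case of Theorem~\ref{thm:Macaulay_dual}, with $\FF=\KK$ and $c=n$), using the apolarity pairing $\langle F,g\rangle=\bigl(g(\partial_{\mathbf{z}})\bullet F\bigr)(\mathbf{0})$ for $F\in\KK[\mathbf{z}]$ and $g\in R$. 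Writing $F=\sum_\alpha c_\alpha \mathbf{z}^\alpha\in M^\perp$, the associated functional $\ell_F=\langle F,-\rangle$ satisfies $\ell_F(\overline{\mathbf{x}^\alpha})=c_\alpha\,\alpha!$, so the factorials cancel and $\sum_\alpha \ell_F(\overline{\mathbf{x}^\alpha})\,\overline{D_{\mathbf{x}}^\alpha f}=\overline{A_F\bullet f}$, where $A_F=\sum_\alpha c_\alpha \partial_{\mathbf{x}}^\alpha$. But $A_F$ is exactly the image of $F$ under the substitution $\mathbf{z}^\alpha\mapsto\partial_{\mathbf{x}}^\alpha$, so as $F$ runs over $M^\perp$ the operator $A_F$ runs over $\mathfrak{A}(M)$. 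Hence $f\in J\star M$ if and only if $A\bullet f\in J$ for every $A\in\mathfrak{A}(M)$, which is (i).

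For (ii), suppose $J$ is radical. Sullivant's formula (\ref{eq:sulli1}) gives $J^{(r)}=J\star\mathfrak{m}^r$, and Remark~\ref{rem_joins_props}(ii) identifies $\mathfrak{A}(\mathfrak{m}^r)=\bigoplus_{\lvert\alpha\rvert\le r-1}\KK\,\partial_{\mathbf{x}}^\alpha$. Plugging this into part (i) yields $J^{(r)}=J\star\mathfrak{m}^r=\{f\in R:\partial_{\mathbf{x}}^\alpha\bullet f\in J\text{ for }\lvert\alpha\rvert\le r-1\}$, which is precisely the Zariski--Nagata description (\ref{eq:sulli2}).

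The main obstacle is the apolarity bookkeeping in the second paragraph: one must pin down the exact normalization of the duality $M^\perp\cong\Hom_\KK(R/M,\KK)$ (which uses $\mathfrak{m}^N\subseteq M$ for $N\gg 0$ so that the pairing is genuinely perfect on the relevant finite-dimensional truncations), verify the adjunction $\langle g(\partial_{\mathbf{z}})\bullet F,h\rangle=\langle F,gh\rangle$ that forces $M^\perp$ to equal the full annihilator of $M$, and track the $\alpha!$ so that $D_{\mathbf{x}}^\alpha$ on the join side matches $\partial_{\mathbf{x}}^\alpha$ on the operator side. Once this dictionary is in place, everything else is formal.
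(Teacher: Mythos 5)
Your proof is correct, and for part (i) it takes a noticeably more elementary route than the paper's. The paper proves (i) inside the differential-operator formalism of Section~\ref{sec5}: it identifies $R/J\otimes_R P^{m-1}_{R/\KK}$ with $R/J\otimes_\KK R/\mathfrak{m}^m$, embeds $\Hom_{R/J}\left(R/J\otimes_\KK R/M,\,R/J\right)$ into $\Diff^{m-1}_{R/\KK}(R,R/J)$, observes that the solution set of the resulting system $\mathcal{E}$ equals $\ker\bigl(\overline{d^{m-1}}\bigr)=J\star M$ by Remark~\ref{rem_kernel_map_join}, and then identifies $\mathcal{E}$ with $\bigl\{\overline{A}:A\in\mathfrak{A}(M)\bigr\}$ by dualizing ``almost verbatim'' as in the proof of Theorem~\ref{thm_noeth_ops_zero_dim}. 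You unwind all of this by hand: Taylor-expanding $\varphi(f)$ as $\sum_\alpha \overline{D^\alpha_{\mathbf{x}}f}\otimes_\KK\overline{\mathbf{x}^\alpha}$, detecting vanishing with $\KK$-linear functionals on the finite-dimensional factor $R/M$, and matching those functionals with $M^\perp$ --- and hence with $\mathfrak{A}(M)$, the $\alpha!$ from the apolarity pairing cancelling against $D^\alpha_{\mathbf{x}}=\frac{1}{\alpha!}\partial^\alpha_{\mathbf{x}}$. The underlying duality is the same, but your version is self-contained and avoids the principal-parts machinery, while the paper's buys uniformity with Sections~\ref{sec5}--\ref{sec6}. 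For (ii) you take Sullivant's formula (\ref{eq:sulli1}) as the external input and deduce the differential description from part (i); the paper instead reduces to prime $J$ via Proposition~\ref{prop_properties_join}(i) and takes the classical Zariski--Nagata description (\ref{eq:sulli2}) as the input, deducing $J^{(r)}=J\star\mathfrak{m}^r$ from part (i). Since both (\ref{eq:sulli1}) and (\ref{eq:sulli2}) are cited as known, either choice is legitimate, and in each case part (i) applied to $M=\mathfrak{m}^r$ together with Remark~\ref{rem_joins_props}(ii) supplies the remaining equality. The only steps that genuinely need care are exactly the ones you flag: that $\Hom_\KK(R/M,\KK)$ is identified with the full annihilator $M^\perp$ (using $\mathfrak{m}^N\subseteq M$ and the adjunction $\langle g\bullet F,h\rangle=\langle F,gh\rangle$), and the factorial normalization.
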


\begin{example}
	Let $n=4,c=2$, fix the prime ideal $P$ in (\ref{eq:twistedcubic1}), and consider
	the $\mathfrak{m}$-primary ideal $ M = \langle x_1^2,x_2^2,x_3^2,x_4^2 \rangle$.
	The join $Q = P \star M$ is a $P$-primary ideal of multiplicity $m=11$.
	It is minimally generated by eight octics such as
	$\,x_1^8-4 x_1^6 x_2 x_3+6 x_1^4 x_2^2 x_3^2-4x_1^2 x_2^3 x_3^3+x_2^4 x_3^4$.
	The differential equations from $\mathfrak{A}(M)$
	are simply the squarefree partial derivatives, so that
	\begin{equation}
	\label{eq:repQ1}
	Q \,\,\,= \,\,\,\biggl\{ \,f \,\in\, R \,\,\,\bigg\vert \,\,\,
	\frac{\partial^{i_1+i_2+i_3+i_4} f}{
		\partial x_1^{i_1}
		\partial x_2^{i_2}
		\partial x_3^{i_3}
		\partial x_4^{i_4} 
	} \,\in\, P \quad \text{whenever}\, \,\,i_1,i_2,i_3,i_4 \in \{0,1\}\, \biggr\}. 
	\end{equation}
	This should be compared to the representation by Noetherian operators
	found in Algorithm~\ref{alg:forward}.
	In Step 1, we obtain the ideal
	$I = \langle y_1^4, u_2  y_1^3 y_2 - u_3  y_1 y_2^3, 3 u_1  y_1^2 y_2^2 - 5 u_3  y_1 y_2^3, y_2^4 \rangle$.
	The inverse system $I^\perp$ in Step~2 is the $11$-dimensional
	subspace of $\FF[y_1,y_2]$ spanned by the ten monomials 
	$z_1^{j_1} z_2^{j_2} $ of degree $j_1+j_2\leq 3$ together with
	$$ B({\bf u},{\bf z}) \,\,\,= \,\,\, 2 u_1 u_3 \,z_1^3 z_2\, +\, 5 u_2 u_3 \,z_1^2 z_2^2 
	\,+\, 2 u_1 u_2 \,z_1 z_2^3 .$$
	From Steps 3 and 4 we  obtain
	$$ A({\bf x},\partial_{\bf x}) \,\, = \,\,
	2 x_1 x_3 \partial_{x_1}^3 \partial_{x_2 }
	+ 5 x_2 x_3   \partial_{x_1}^2 \partial_{x_2}^2 
	+ 2 x_1 x_2 \partial_{x_1} \partial_{x_2}^3  . $$
	This gives the following alternative representation of $Q$ by differential equations:
	\begin{equation}
	\label{eq:repQ2}
	Q \,\,\,= \,\,\,\biggl\{ \,f \,\in\, R \,\,\,\bigg\vert \,\,\,
	A \bullet f \in P \,\,\, \,{\rm and} \,\,\,
	\frac{\partial^{j_1+j_2} f}{
		\partial x_1^{j_1}
		\partial x_2^{j_2}
	} \,\in\, P \quad \text{whenever}\, \,j_1 + j_2 \leq 3
	\, \biggr\}. 
	\end{equation}
	The two representations (\ref{eq:repQ1}) and (\ref{eq:repQ2}) differ in
	two fundamental ways. The operators in (\ref{eq:repQ1}) have
	constant coefficients but differentiation involves all four variables.
	In (\ref{eq:repQ2}) we are using an operator from $D_{4,2}$ with polynomial coefficients
	but we differentiate only two variables.
\end{example}

We next show that not every primary ideal arises from the join construction. 

\begin{example}[Palamodov's example]
	\label{exam:Palamodov}
	Let $n = 3$ and $c = 2$, and consider the primary ideal $Q = \langle x_1^2, x_2^2, x_1 - x_2x_3 \rangle$ with $P = \sqrt{Q}=\langle x_1, x_2 \rangle$.
	From \cite[Proposition 4.8 and Example 4.9, page 352]{BJORK} we know that $Q$ cannot be
	described by differential operators with constant coefficients only.
	Theorem~\ref{thm:ourZN} (i) implies that $Q$ does not arise from
	the join construction, i.e.~we cannot find an $\mathfrak{m}$-primary ideal $M$ such that $Q = P \star M$.
	On the other hand, Algorithm~\ref{alg:forward} applied to $Q$ gives the two Noetherian operators $A_1 = 1, A_2 = x_3\partial_{x_1} + \partial_{x_2}$.	
\end{example}

\begin{proof}[Theorem~\ref{thm:ourZN}]
	$(i)$ We use the notation and results from  Section~\ref{sec5}.
	Fix
	an integer $m$ such that $\mathfrak{m}^m \subseteq M$.
	In (\ref{eq_isoms_Diff_R/J})
	we obtained the explicit isomorphism 
	\begin{equation}
	\label{eq_join_isom_diff_ops}
	\Hom_{R/J}\left(R/J \otimes_R P_{R/\KK}^{m-1}, R/J\right) \xrightarrow{\cong} \Diff_{R/\KK}^{m-1}(R,R/J), \quad \psi \mapsto \psi \circ h_{m-1} \circ d^{m-1}
	\end{equation}
	where $h_{m-1}$ is the canonical map $ P_{R/\KK}^{m-1} \rightarrow R/J \otimes_R P_{R/\KK}^{m-1}$ and $d^{m-1}$ is the map in (\ref{eq_univ_diff}).
	Setting $\,T=R \otimes_\KK R =\KK[x_1, \ldots, x_n, y_1, \ldots, y_n]\,$
	as in Section~\ref{sec5}, we have the following isomorphisms:
	\begin{equation}		\label{eq_isom_tensor_prods_join}
	R/J \otimes_R P_{R/\KK}^{m-1}  \;\cong\; \frac{T}{J(\mathbf{x}) \,+
		\, \mathfrak{m}^m(\mathbf{y})} \;\cong\; R/J \otimes_\KK R/\mathfrak{m}^m.
	\end{equation}
	This $\KK$-vector space is considered as an $R$-module 
	via the left factor $R/J \otimes_\KK 1$.	
	By (\ref{eq_join_isom_diff_ops}) and (\ref{eq_isom_tensor_prods_join}), the surjection 
	$R/J \otimes_\KK R/\mathfrak{m}^m \twoheadrightarrow R/J \otimes_\KK R/M$
	gives the~inclusion 
	\begin{equation}
	\label{eq_inclusion_join_diffs}
	\Hom_{R/J}\left(R/J \otimes_\KK R/M, R/J\right) 
	\,\,\hookrightarrow\,\, \Diff_{R/\KK}^{m-1}(R,R/J).
	\end{equation}
	Since  $ R/J \otimes_\KK R/M$ is a finitely generated free $R/J$-module, we have
	\begin{equation*}
	\big\lbrace w \in  R/J \otimes_\KK R/M : \psi(w)=0 \;\text{ for all }\; \psi \in \Hom_{R/J}\left(R/J \otimes_\KK R/M, R/J\right) \big\rbrace \;=\;  \{0\}.
	\end{equation*}
	Let $\,\mathcal{E} \subseteq \Diff_{R/\KK}^{m-1}(R,R/J)\,$ denote the image of
	(\ref{eq_inclusion_join_diffs}). 
	The isomorphism
	(\ref{eq_join_isom_diff_ops}) implies
	$$
	{\rm Sol}(\mathcal{E}) = {\rm Ker}\left(\overline{d^{m-1}}\right), \;\text{ where }\; \overline{d^{m-1}} : R \rightarrow R/J \otimes_\KK R/M, \;\; x_i \mapsto \overline{x_i} \otimes_\KK 1 + 1 \otimes_\KK \overline{x_i}.
	$$
	Therefore, Remark~\ref{rem_kernel_map_join} yields that ${\rm Sol}(\mathcal{E})=J \star M$.
	
	By \cite[Theorem 7.11]{MATSUMURA}, the
	inclusion (\ref{eq_inclusion_join_diffs}) can be written equivalently as
	\begin{align*}
	& \Hom_{R/J}\left(R/J \otimes_\KK R/M, R/J\right)
	\,\, \cong \,\, R/J \otimes_\KK \Hom_\KK(R/M,\KK)\\ & \quad
	\hookrightarrow\; R/J \otimes_\KK \Hom_\KK(R/\mathfrak{m}^m,\KK) \,\, \cong \,\, \Diff_{R/\KK}^{m-1}(R,R/J).
	\end{align*}
	The Hom-tensor adjunction and the perfect 
	pairing in (\ref{eq_perf_pairing}) give the isomorphisms
	$$
	\Hom_\KK\left(R/M,\KK\right) \,\,\cong\,\, \Hom_R\left(R/M, \Hom_{\KK}\left(R/\mathfrak{m}^m,\KK\right)\right) \,\,\cong\,\, \left(0 :_{\KK[\mathbf{x^{-1}}]} M\right).
	$$
	Then, by arguments almost verbatim to those used in the proof of Theorem~\ref{thm_noeth_ops_zero_dim}, we find that 
	$\,\mathcal{E} \subseteq \Diff_{R/\KK}^{m-1}(R, R/J)\,$ is a finitely generated free $R/J$-module,
	and it is generated by 
	$\,\bigl\{ \,\overline{A} \,:\,  A \in \mathfrak{A}(M) \subset \KK[\partial_{\mathbf{x}}] \cap \Diff_{R/\KK}^{m-1}(R,R)\bigr\} \subset \Diff_{R/\KK}^{m-1}(R,R/J)$.
	Summing up, we~conclude 
	$$ J \star M \,\,=\,\, {\rm Sol}(\mathcal{E}) \,\,=\,\,
	\big\lbrace f \in R : A \bullet f \in J \;\text{ for all }\; A \in \mathfrak{A}(M) \big\rbrace.
	$$
	
	$(ii)$  Since $J$ is radical, $J=P_1 \cap \cdots \cap P_k$ for some prime ideals $P_j \subset R$, and so we have $J^{(r)}=P_1^{(r)} \cap \cdots \cap P_k^{(r)}$.
	Proposition~\ref{prop_properties_join}$(i)$ implies $J \star \mathfrak{m}^r = \left(P_1 \star \mathfrak{m}^r\right) \cap \cdots \cap \left(P_k \star \mathfrak{m}^r \right)$.
	Therefore, to finish the proof, it suffices to consider the case where $J=P$ is a prime ideal.
	The Zariski-Nagata Theorem implies  $\,
	P^{(r)} = \big\lbrace f \in R : \partial_{\mathbf{x}}^\alpha \bullet f \in J \; \text{ for all }\; \lvert \alpha \rvert \le r-1 \big\rbrace $.
	The conclusion now follows from part $(i)$ applied to $M=\mathfrak{m}^r$.
	This establishes Theorem \ref{thm:ourZN}.
\end{proof}

\section{Algorithms and their Implementation}
\label{sec8}
 
 In this last section we present practical methods for transitioning between the different
 representations of a $P$-primary ideal $Q$ seen in Theorem \ref{thm:main}.
Our first algorithm computes the Noetherian operators from generators of $Q$.
This provides a framework for describing the space of solutions to the system of
linear PDE associated with $Q$. Namely, we obtain an integral
representation of the solutions via Noetherian multipliers, as promised by the
Ehrenpreis-Palamodov Theorem~\ref{thm:Palamodov_Ehrenpreis}.
Our second algorithm performs the reverse transition, from the solutions to the PDE.
More precisely, here the input is the
list of Noetherian operators and the output is a list of ideal generators for $Q$. The 
Hilbert scheme in Theorem \ref{thm:main} (b) plays a surprising role:
its appearance is responsible for the speed of our computations.

\smallskip

We implemented our algorithms in {\tt Macaulay2}.
The code is made available at 
\begin{equation}
\label{eq:URL}
\hbox{\href{https://software.mis.mpg.de/PrimaryIdealsandTheirDifferentialEquations/index.html}{https://software.mis.mpg.de}.}
 \end{equation}
We hope to develop this into a {\tt Macaulay2} package.
The material in this section extends both the algebraic theory in    \cite{BRUMFIEL_DIFF_PRIM,NOETH_OPS,OBERST_NOETH_OPS}  and the algorithmic steps in \cite{DAMIANO,STURMFELS_SOLVING}.  
  
\smallskip

We now describe our algorithm for computing a set of Noetherian operators of a given $P$-primary ideal $Q$.
The correctness of this algorithm is a direct byproduct of  Theorem~\ref{thm:main}.
In our presentation and examples, we always assume that  $\{x_{c+1},\ldots,x_n\}$ is a maximal independent set modulo $P$.
An algorithm for computing maximal independent sets is described in \cite{KREDEL_INDEP_SET}.
The command \texttt{independentSets} in \texttt{Macaulay2} can be used for this task.

\smallskip

\begin{algo}[From ideal generators to Noetherian operators] \label{alg:forward} \hfill \\
	{\rm Input:} Generators $p_1,p_2,\ldots,p_r$ of a $P$-primary ideal $Q$ in $R =\KK[x_1,\ldots,x_n]$. \\
	{\rm Output:} Elements $A_1,A_2,\ldots,A_m $ in the relative Weyl algebra $D_{n,c}$ that satisfy
	(\ref{eq:fromAtoQ}).
	\begin{enumerate}[(1)]\rm
		\item Compute polynomials in $\FF[y_1,\ldots,y_c]$ that generate the 
	 ideal $I$ in (\ref{eq:corr12}). 		
		\item  Using linear algebra over $\FF$, compute a basis
		$\{B_1,\ldots,B_m\}$ for the inverse system $I^\perp$. 
				
		\item Lift each $B_i(\mathbf{u},\mathbf{z})$ to obtain the Noetherian multipliers $B_i(\mathbf{x},\mathbf{z})$.
		
		\item  Replace $\mathbf{z}$ by $\partial_\mathbf{x}$ to get the Noetherian operators $A_i(\mathbf{x},\partial_\mathbf{x}) $ in (\ref{eq:resultingDO}).
	\end{enumerate}
\end{algo}

\begin{remark}
The computation of the inverse system $I^\perp$ in step (2)  can be implemented
with {\em Macaulay matrices}.
Details on computing Macaulay inverse systems are found in \cite[Chapter~7]{ELKADI_MOURRAIN}. The same applies to step (3) of Algorithm \ref{alg:backward}.
\end{remark}

 The map $\gamma$ from (\ref{eq_map_gamma}) is 
central to Algorithm \ref{alg:forward}. It is used in (1) for
 obtaining a zero-dimensional ideal in $\FF[y_1,\ldots,y_c]$ from a $P$-primary ideal in $\KK[x_1,...,x_n]$.
 Let $\KK[\mathbf{x}, \mathbf{u}, \mathbf{y}] = \KK[x_1,\ldots,x_n,u_1,\ldots,u_n,y_1,\ldots,y_c]$.
 The map $\gamma$ is encoded in the ideal
\begin{equation}
\label{eq:weencode}
\big\langle\, P(\mathbf{u}), \,x_1-y_1-u_1,\,\ldots\,,\,x_c-y_c-u_c\,,\,\, x_{c+1}-u_{c+1}\,,
\,\ldots\,,\,x_n-u_n \,\bigr\rangle \;\subset \; \KK[\mathbf{x}, \mathbf{u}, \mathbf{y}].
\end{equation}
The ideal $P(\mathbf{u})$ is obtained by the substitution $x_i \mapsto u_i$
in a set of generators for $P$.
Depending on the context, the unknown $u_i$ can be seen either as the residue class of $x_i$ in
 $\FF$ or as a variable in $\KK[\mathbf{x}, \mathbf{u}, \mathbf{y}] $.
The roles of the various unknowns are explained in Section \ref{sec2}.
This technique was used for encoding the differential operators in our running example 
in~(\ref{eq:magic}).

\smallskip

The map $\gamma$ also has a crucial role in the algorithm for the backward process.

% \smallskip

\begin{algo}[From Noetherian operators to primary ideal generators] \label{alg:backward} \hfill \\
	{\rm Input:} Elements $A_1,A_2,\ldots,A_m $ in the relative Weyl algebra $D_{n,c}$ that satisfy
	(\ref{eq:leftequalsright}). \\
	{\rm Output:} 
	Generators $p_1,p_2,\ldots,p_r$ of a $P$-primary ideal $Q$
	that is defined as in (\ref{eq:fromAtoQ}). 
	\begin{enumerate}[(1)]\rm
		\item In each $A_i(\mathbf{x},\partial_\mathbf{x})$  replace $\partial_\mathbf{x}$ by $\mathbf{z}$ to obtain the $m$ Noetherian multipliers $B_i(\mathbf{x},\mathbf{z})$ in (\ref{eq:thisresults}).
		
		\item Replace $\mathbf{x}$ by $\mathbf{u}$ to obtain an $\FF$-basis $\{B_1,\ldots,B_m\}$ for the inverse system $I^\perp$. 
		\item  Using $\,\FF$-linear algebra in $\FF[y_1,\ldots,y_c]$, find generators
		for the  ideal~$I$. 
		
		\item  Lift  generators of  $I$ to $\KK[{\bf u}, \mathbf{y}]$ and then add them 
		 to the ideal in (\ref{eq:weencode}). Eliminate
		$\{y_1,\ldots,y_c,u_1,\ldots,u_n\}$ to obtain generators of an ideal $Q' \subset R$.
		Now,  the desired primary ideal $Q$ can be computed as the $P$-primary component of $Q'$.
		
		\textbf{Remark:} This step accounts for the computation of the preimage $\gamma^{-1}(I)$ in the correspondence of (\ref{eq:corr12}). It is trivial in theory  but computationally quite tricky.
		Note that,~in the map $\gamma:R \rightarrow \FF[\mathbf{y}]$, one has that $\FF[\mathbf{y}]$ is not necessarily an algebra of finite type over the polynomial ring $R$. Thus, the computation of $\gamma^{-1}(I)$ cannot be performed directly with the command \texttt{preimage} in \texttt{Macaulay2}.
		We include some details for this step.  
		
		The map $\gamma : R \rightarrow \FF[\mathbf{y}]$ can be expressed as a composition of the map 
		\begin{equation*}
		\gamma':R \hookrightarrow (R/P)[\mathbf{y}] \subset \FF[\mathbf{y}]\, , \qquad
		\begin{matrix}
		x_i  &\mapsto &  y_i+u_i, & \!\!\!\!\! \mbox{ for }1\leq i\leq c,\\
		x_j & \mapsto  & u_j,& \quad \mbox{ for }c+1\leq j\leq n,
		\end{matrix}
		\end{equation*}
		where $u_i$ denotes the class of $x_i$ in $R/P \subset \FF$, and the localization $(R/P)[\mathbf{y}] \hookrightarrow \FF[\mathbf{y}]$.
		Let $I' \subset (R/P)[\mathbf{y}] \subset \FF[\mathbf{y}]$ be an ideal obtained by lifting generators of $I \subset \FF[\mathbf{y}]$.
Then $Q' = {\gamma'}^{-1}(I')$ is found by eliminating $\{y_1,\ldots,y_c,u_1,\ldots,u_n\}$~from 
		$$
		\big\langle \,x_1-y_1-u_1,\,\ldots\,,\,x_c-y_c-u_c\,,\,\, x_{c+1}-u_{c+1}\,,
		\,\ldots\,,\,x_n-u_n \,, \pi^{-1}(I')\,\bigr\rangle,
		$$
		where $\pi$ denotes the canonical map $\pi : \KK[\mathbf{u},\mathbf{y}] \rightarrow (R/P)[\mathbf{y}] \subset \FF[\mathbf{y}]$.
		Finally, since $\FF = W^{-1}(R/P)$, where $W$ is the multiplicative closed subset $W = \KK[x_{c+1},\ldots,x_n] \backslash\{0\}$, it follows that $Q$ is the $P$-primary component of $Q'$.
	\end{enumerate}
\end{algo}

\smallskip

After these technical points,
here are two comments concerning Algorithm~\ref{alg:backward}.

\begin{remark}
{(i)} Step (4) of Algorithm~\ref{alg:backward} can be performed alternatively by using \cite[Algorithm 7]{LEVANDOVSKYY}. We thank one of the reviewers for pointing this out.\\
	 {(ii)} In the input,
	 we may allow elements from the Weyl algebra
$D = R\langle \partial x_1, \ldots,  \partial x_n \rangle$.	Our code in \url{https://software.mis.mpg.de} works for that case.
In the above version of Algorithm~\ref{alg:backward}, we restricted to the
relative Weyl algebra $D_{n,c}$, as this is how the proofs in Sections~\ref{sec4}, \ref{sec5}, \ref{sec6}
are written. Augmenting these to $D$ will require more work, to be deferred
until a better algorithmic understanding
of (\ref{eq:leftequalsright}) is available.
\end{remark}

Here is an example that illustrates our two algorithms
and our \texttt{Macaulay2} code.

\begin{example}
	In Algorithm \ref{alg:forward}, let
	$n = 4$ and fix the prime $P = \langle x_1,x_2,x_3 \rangle$
	that defines a line in $4$-space $\KK^4$. The following ideal is
	$P$-primary of multiplicity $m = 4$:
	$$ Q \,\,=\,\, \bigl\langle\, x_1^2, \,x_1 x_2,\, x_1 x_3, \,x_1 x_4-x_3^2+x_1, \,
	x_3^2 x_4-x_2^2, \,x_3^2 x_4-x_3^2-x_2 x_3+2 x_1 \,\bigr\rangle .$$
	In Step 1 we replace $x_1,x_2,x_3$
	by $y_1,y_2,y_3$ and  $x_4$ by $u_4$
	to get a zero-dimensional ideal $I$ in $\FF[y_1,y_2,y_3]$,
	where $\FF = \KK(u_4)$.
	Note that $I$ contains $\langle y_1,y_2,y_3 \rangle^4$.
	 To check that $I$ is a point in ${\rm Hilb}^4(\FF[[y_1,y_2,y_3]])$, we exhibit 
 a flat deformation to the square of the maximal ideal:
	$$ I \,\,=\,\,\bigl\langle\, y_1^2\,,\,y_1 y_2\,,\, y_1 y_3\, ,\,
	y_2^2 -(u_4^2+u_4) \,y_1\,,\,
	y_2 y_3 - (u_4^2 + 1) \,y_1\,,\,
	y_3^2 - (u_4+1)\, y_1\,\bigr\rangle. $$
	We refer to \cite[Example 18.9]{MILLER} for more details. The inverse system $I^\perp$ lives in $\FF[z_1,z_2,z_3]$. It is the $4$-dimensional $\FF$-vector space with basis
	$$ B_1 \,=\,  (u_4^2+u_4) z_2^2 + 2 (u_4^2+1)z_2 z_3 + (u_4+1) z_3^2 +2z_1\,,\,
	B_2 =  z_2\,,\, B_3 = z_3\,, \,B_4 = 1. $$
	This space is closed under differentiation. The Noetherian operators in Step 4 are
	$$ A_1 \,=\,
	(x_4^2+x_4) \partial_{x_2}^2 + 2 (x_4^2+1)
	\partial_{x_2} \partial_{x_3} + (x_4+1) \partial_{x_3}^2 +2 \partial_{x_1},\,
	A_2 = \partial_{x_2\,}, \, A_3 = \partial_{x_3} \,, \, A_4 = 1 . $$
	We now check that these four operators in $D_{4,3}$  
	represent the given primary ideal:
	$$ Q  \,\,= \,\,\bigl\{\,f \in \KK[x_1,x_2,x_3,x_4]:\, A_i \bullet f\in\langle x_1,x_2,x_3\rangle 
	\hbox{ for $i=1,2,3,4$} \,\bigr\} .$$
	Reversing  this entire computation is the point of Algorithm \ref{alg:backward}.
	Starting from the operators $A_1,A_2,A_3,A_4$, we compute 
	the polynomials $B_1,B_2,B_3,B_4$ in $\FF[z_1,z_2,z_3]$, which span
	the inverse system $I^\perp$. 
	In Step 3, we find generators of the ideal $I$
	in $\FF[y_1,y_2,y_3$]. And, finally, from this one obtains generators
	of $Q$ by the elimination process described in Step 4.	

	After saving the code posted at  (\ref{eq:URL})
 in a file called \texttt{noetherianOperatorsCode.m2}, we execute the following \texttt{Macaulay2} session.

	\verbatimfont{\scriptsize}%
\begin{verbatim}
i1 : load "noetherianOperatorsCode.m2"
i2 : R = QQ[x1,x2,x3,x4];
i3 : Q = ideal(x1^2,x1*x2,x1*x3,x1*x4-x3^2+x1,x3^2*x4-x2^2,x3^2*x4-x3^2-x2*x3+2*x1);
i4 : L = time getNoetherianOperatorsHilb(Q)
     -- used 0.106942 seconds
                      2         2       2                          2
o4 = {1, dx2, dx3, (x4  + x4)dx2  + (2x4  + 2)dx2*dx3 + (x4 + 1)dx3  + 2dx1}

i5 : Q2 = time getIdealFromNoetherianOperators(L, radical Q)
     -- used 0.115953 seconds
              2                        2                                
o5 = ideal (x3  - x1*x4 - x1, x1*x3, x2  - x2*x3 - x1*x4 + x1, x1*x2, 
       2       2
     x1 , x1*x4  - x2*x3 + x1)

i6 : Q == Q2                  -- check that the two ideals are equal
o6 = true
\end{verbatim}		
\noindent
By Theorem~\ref{thm:Palamodov_Ehrenpreis}, any solution to 
(\ref{eq:mustsolvethis}) of PDE encoded by $Q$ can be represented~as
	$$
	\psi(\mathbf{z}) \,\,= \,\, 
	\int \exp\left(x_4 z_4\right) d\mu_1(x_4) + \int z_2\exp\left(x_4 z_4\right) d\mu_2(x_4) +\int z_3\exp\left(x_4 z_4\right) d\mu_3(x_4)+
	$$
$$+\int \left((x_4^2+x_4)z_2^2+(x_4^2+1)z_2z_3+(x_4+1)z_3^2+2z_1\right)\exp\left(x_4 z_4\right) d\mu_4(x_4), $$

\noindent
where $\mu_1, \dots, \mu_4$ are measures on the line  $\,V(\langle x_1,x_2,x_3 \rangle) = \{(0,0,0,x_4)\}$.
\end{example}

Finally, we apply our Algorithm \ref{alg:forward} to the PDE discussed in Example \ref{ex:september17}.
The computations below show the scope and capability of our algorithm to solve PDE with constant coefficients via the Ehrenpreis-Palamodov Fundamental Principle.

\begin{example} \label{ex:september21}
We now recall the systems  described in Example \ref{ex:september17}. Our goal is to provide a finite representation of its non-degenerate solutions $\psi({\bf z})$ via Noetherian multipliers.
  We begin by illustrating the use of Algorithm \ref{alg:forward}
 for the case $k=3$:
\begin{equation}\label{k=3} \begin{matrix}
\!\! \bigl(\frac{\partial^2}{ \partial z_1^2} - \frac{\partial^2}{ \partial z_2 \partial z_3} \bigr)^{\! 3} \! \psi({\bf z})
\, = \,
\bigl(\frac{\partial^2}{ \partial z_1 \partial z_2} - \frac{\partial^2}{ \partial z_3 \partial z_4} \bigr)^{\! 3} \! \psi({\bf z})
\, = \,
\bigl(\frac{\partial^2}{ \partial z_2^2} - \frac{\partial^2}{ \partial z_1 \partial z_4} \bigr)^{\! 3} \! \psi({\bf z})
\, = \, 0 . \end{matrix}
\end{equation}

\noindent
The first step is to encode (\ref{k=3}) in a polynomial ideal $J$. This is done in  \texttt{Macaulay2}:

\verbatimfont{\scriptsize}
\begin{verbatim}
i1 : load "noetherianOperatorsCode.m2"
i2 : R = QQ[x1,x2,x3,x4];
i3 : k=3;
i4 : J = ideal((x1^2-x2*x3)^k,(x1*x2-x3*x4)^k,(x2^2-x1*x4)^k);
\end{verbatim}

\noindent
The saturation step in (\ref{eq:ourintention}) replaces the three order six operators in (\ref{k=3}) with
a new system of six linear PDE whose solutions are precisely the non-degenerate solutions 
to (\ref{k=3}):

\verbatimfont{\tiny}
\begin{verbatim}
i5 : Q = saturate(J,ideal(x1*x2*x3*x4))
              5       3  2           3          4  2      2        2     2  2  2        2  3       4  
o5 = ideal (x2 x3 + x1 x2 x4 - 4x1*x2 x3*x4 - x1 x4  + 3x1 x2*x3*x4  + x2 x3 x4  - x1*x3 x4 , x1*x2 x3
     --------------------------------------------------------------------------------------------------
         4           2  2          3  2       3     2            2  2     3  3    2  3       4  2  
     + x1 x2*x4 - 3x1 x2 x3*x4 - x2 x3 x4 - x1 x3*x4  + 4x1*x2*x3 x4  - x3 x4 , x1 x2 x3 - x2 x3  +
     --------------------------------------------------------------------------------------------------
       5        3                 2  2       2  2  2        3  2    6         4        2  2  2  
     x1 x4 - 4x1 x2*x3*x4 + 3x1*x2 x3 x4 + x1 x3 x4  - x2*x3 x4 , x2  - 3x1*x2 x4 + 3x1 x2 x4  -
     --------------------------------------------------------------------------------------------------
       3  3    3  3      2  2                 2  2     3  3    6      4           2  2  2     3  3
     x1 x4 , x1 x2  - 3x1 x2 x3*x4 + 3x1*x2*x3 x4  - x3 x4 , x1  - 3x1 x2*x3 + 3x1 x2 x3  - x2 x3 )
\end{verbatim}

\noindent
The system of PDE  we need to solve is  obtained after replacing $x_i$ by $\partial_{z_i}$ in the ideal $Q$ above. 
The new system has six differential equations, the last one being 

$$\left(\frac{\partial^6}{\partial z_1^6}-3\frac{\partial^6}{\partial z_1^4z_2z_3}+3\frac{\partial^6}{\partial z_1^2z_2^2z_3^2}-\frac{\partial^6}{\partial z_2^3 z_3^3}\right)\psi({\bf z})=0.$$

\smallskip

\noindent
Algorithm \ref{alg:forward} returns the Noetherian operators that encode the $P$-primary ideal~$Q$:

\verbatimfont{\scriptsize}
\begin{verbatim}
i6 : getNoetherianOperatorsHilb(Q)
                      2              2           3          2                 2          3
o6 = {1, dx1, dx2, dx1 , dx1*dx2, dx2 , - 4x3*dx1  - 6x1*dx1 dx2 + 6x2*dx1*dx2  + 4x4*dx2 }
\end{verbatim}

We now replace $\partial_{x_i}$ by $z_i$ in the Noetherian operators in {\tt o6}.
This  gives  seven Noetherian multipliers: $B_1 {=}1, B_2 = z_1, B_3 =z_2, B_4=z_1^2, B_5=z_1z_2, B_6=z_2^2 , B_7=-4x_3z_1  - 6x_1z_1 ^2z_2 + 6x_2 z_1 z_2^2  + 4x_4 z_2^3$. Then non-degenerate solutions of (\ref{k=3}) are 
	$$
	\psi(\mathbf{z}) \,\,= \,\, \sum_{i=1}^7
	\int_{V(P)} \!\!\!\! B_i({\bf x},{\bf z})\exp\left(\mathbf{x}^t \mathbf{z}\right) d\mu_i(\mathbf{x}),
	$$

\noindent	
where $\mu_i$ are measures supported on $V(P)$, as in Example \ref{ex:NoetMult}. 
Explicit solutions can be found with the parametrization  given in the Introduction. 
For example, taking $\mu_7$ as the Dirac measure at the point $(2,3)$  and setting 
$\mu_1=\dots=\mu_6=0$, we~get
$$\,\psi(\mathbf{z}) = (-32z_1  - 72z_1 ^2z_2 + 108 z_1 z_2^2  + 108 z_2^3) \,{\rm exp} (12 z_1 + 18 z_2 + 8 z_3 + 27 z_4).$$

We now consider larger values of $k$ in Example \ref{ex:september17}.
Again, our aim is to compute all  non-degenerate solutions of the three
given differential equations of order $2k$. Our implementation of
Algorithm \ref{alg:forward} is quite successful in carrying
out this computation, even as $k$ increases. 
We performed this computation up to $k=12$. The results are reported in
Table \ref{tab:impressive}. The first column gives the number of
minimal generators of the primary ideal $Q$. The second column
shows the length $m$ of $Q$ over $P$. The third column shows
the running time
on a MacBook Pro with a \emph{2,7 GHz Dual-Core Intel Core i5 processor} and \emph{8 GB 1867 MHz DDR3}.

\begin{table}[h]
\begin{center}
\begin{tabular}{|c|ccc|}
\hline
k  & \# PDE & \# Noetherian operators & time (sec)\\
\hline\hline
%1 & 3 & 1 & 0.0560627\\
%\hline
%2 & 6 & 3 & 0.13232\\
%\hline
%3 & 6 & 7 & 0.270294 \\
%\hline
%4 & 10 & 12 & 0.786478\\
%\hline
%5 & 12 & 19 & 4.15943\\
%\hline
%6 & 24 & 27 & 10.6518 \\
%\hline
%7 & 18 & 37 & 45.8213\\
%\hline
%8 & 31 & 48 & 82.6401\\
%\hline
%9 & 25 & 61 & 275.584\\
%\hline
%10 & 54 & 75 & 470.572\\
%\hline
%11 & 39 & 91 & 1290.45\\
%\hline
%12 & 66 & 108 & 2050.76\\
%\hline
1  &  3  &  1  &  0.042  \\
\hline
2  &  6  &  3  &  0.068  \\
\hline
3  &  6  &  7  &  0.216 \\
\hline
4  &  10  &  12  &  0.514 \\
\hline
5  &  12  &  19  &  2.571  \\
\hline
6  &  24  &  27  &  6.306  \\
\hline
7  &  18  &  37  &  28.502 \,  \\
\hline
8  &  31  &  48  &  51.378\,\,  \\
\hline
9  &  25  &  61  &  196.969\,\, \, \\
\hline
10  &  54  &  75  &  308.750\,\, \, \\
\hline
11  &  39  &  91  &  929.884 \,\,\, \\
\hline
12  &  66  &  108 \! &  1359.310 \,\,\, \,\, \\
\hline
\end{tabular}
\end{center}
\caption{\label{tab:impressive} Computation times of Algorithm \ref{alg:forward} 
for Example \ref{ex:september17} with $k\leq 12$.}
\end{table}

The table shows that,
using {\tt Macaulay2}, we are able to
 solve some rather nontrivial PDE with constant coefficients.
 For instance, for $k = 12$, 
the primary ideal $Q$ is generated by $66$ polynomials whose degrees vary between $24$ and $30$.
The reader can view the massive size of this ideal in the posting
at the URL (\ref{eq:URL}).
\end{example}

\begin{remark}
One nice feature of our implementation of Algorithm \ref{alg:forward}
is that the input ideal need not be primary. All we require
is that the radical of the input ideal is a prime ideal $P$.
The zero-dimensional ideal computed in the first step of the {\tt Macaulay2} code only reflects the $P$-primary component of the input. Hence the algorithm finds the Noetherian operators corresponding
to that $P$-primary ideal.
For instance, in Example \ref{ex:september21},
we can replace  {\tt i6} by
\begin{verbatim}
i7 : getNoetherianOperatorsHilb(J)
\end{verbatim}
This produces the same output, namely the seven
Noetherian operators in {\tt o6}.
\end{remark}

\end{document}